\documentclass[10pt,a4paper,oneside]{article}

\usepackage{amssymb}
\usepackage{amsmath,amsfonts,amsthm,amstext}
\usepackage[english]{babel}
\usepackage{inputenc}
\usepackage{csquotes}
\usepackage{makeidx}
\usepackage{fancybox}
\usepackage{niceframe}
\usepackage{enumerate}
\usepackage{array}
\usepackage{subfigure}
\usepackage{newlfont}
\usepackage{verbatim}
\usepackage[dvips]{psfrag}
\usepackage{graphicx,color,soul}
\usepackage[usenames,dvipsnames,svgnames,table]{xcolor}
\usepackage{xcolor}
\usepackage{float}
\usepackage[footnotesize]{caption}
\usepackage{indentfirst}
\usepackage[normalem]{ulem}
\usepackage{pst-text}
\usepackage{pgf}
\usepackage{wrapfig}
\usepackage{lmodern}
\usepackage[top=2cm,bottom=1.5cm,right=1.5cm,left=2cm]{geometry}
\usepackage{xfrac}
\usepackage{aliascnt}
\usepackage{hyperref}
\usepackage{framed}
\usepackage[ampersand]{easylist}
\usepackage{pdfpages}
\usepackage{afterpage}

\usepackage[backend=bibtex,style=alphabetic]{biblatex}
\addbibresource{./tese.bib}

\allowdisplaybreaks

\input xy
\xyoption{all}


\graphicspath{{./FIG/}{./FIG1/}}


\newcommand{\s}{\Sigma}
\newcommand{\Z}{Z=(X,Y)}
\newcommand{\U}{\mathcal{U}}
\newcommand{\R}{\mathbb{R}}

\newcommand{\sgn}[1]{\mbox{sgn}\left( #1 \right)}
\newcommand{\0}{\mathbf{0}}




\newtheorem{definition}{Definition}[section]

\newaliascnt{defi}{definition}
\newtheorem{defi}[defi]{Definition}
\aliascntresetthe{defi}

\newaliascnt{theo}{definition}
\newtheorem{theo}[theo]{Theorem}
\aliascntresetthe{theo}

\newaliascnt{corol}{definition}
\newtheorem{corol}[corol]{Corolary}
\aliascntresetthe{corol}

\newaliascnt{lemma}{definition}

\aliascntresetthe{lemma}

\newaliascnt{prop}{definition}
\newtheorem{prop}[prop]{Proposition}
\aliascntresetthe{prop}

\newaliascnt{rem}{definition}
\newtheorem{rem}[rem]{Remark}
\aliascntresetthe{rem}

\theoremstyle{definition}

\numberwithin{equation}{section}

\let\oldtheequation\theequation
\makeatletter
\def\tagform@#1{\maketag@@@{\ignorespaces#1\unskip\@@italiccorr}}
\renewcommand{\theequation}{(\oldtheequation)}
\makeatother

\setlength{\parindent}{2cm}


\title{Generic behavior of a piecewise smooth vector field with non-smooth switching surface}
\author{Juliana Larrosa\footnote{ \footnotesize Departamento de Matem\'{a}tica, Universidade Federal de Santa Maria, Avenida Roraima 1000, 97195-000  Santa Maria, RS, Brasil.} , Marco Antonio Teixeira\footnote{\footnotesize Departamento de Matem\'{a}tica, Universidade Estadual de Campinas, Rua S\'{e}rgio Buarque de Holanda 651, Cidade Universit\'{a}ria, 13083-859, Campinas, SP, Brazil.}, Tere M-Seara\footnote{\footnotesize Dit. Matem\`{a}tiques, Universitat Polit\`{e}cnica de Catalunya, Diagonal 647, 08028 Barcelona,Spain.}} 

\begin{document}

\maketitle

\begin{abstract}
This paper consists in discussing some issues on generic local classification  of  typical singularities of $2D$ piecewise smooth vector fields  when the switching set is an algebraic variety. The main focus is to obtain classification results  concerning structural stability and generic codimension one bifurcations. 
\end{abstract}

\section{Introduction}

First of all, we observe that this paper is part of a general program involving the study of  discontinuous piecewise smooth systems in $\R^n$ of the form
\begin{equation} \label{eq:intro}
	\dot{\mathbf{x}} = F(\mathbf{x}) + \sgn{f(\mathbf{x})}G(\mathbf{x});
\end{equation}
where $\mathbf{x} = (x_1,....., x_n)$ and $F, \,G : \R^n \rightarrow \R^n$, $f: \R^n \rightarrow \R$  are smooth functions. Note that we have two different smooth systems, one, $X=F+G$, in the half space defined by $f(\mathbf{x}) > 0$ and the other, $Y=F-G$, in the half space defined by $f(\mathbf{x}) < 0$.

In this direction, let $X,Y: \U \subset \R^2 \rightarrow \R^2$ be sufficiently smooth vector fields defined in a bounded neighborhood $\U$ of the origin. Consider $f: (x_1,x_2) \in \mathcal{U} \subset \R^2 \rightarrow x_2 \in \R$ having $0$ as a regular value and let $\s=f^{-1}(0)$. Thus $\s$ is a regular codimension one submanifold of $\U \subset \R^2$. The submanifold $\s$ splits the open set $\U$ into two open sets $\U^+ = \{ p \in \U: x_2>0 \}$ and $\U^- = \{ p \in \U: x_2<0 \}$.

Let $\mathcal{Z}$ be the space of all piecewise smooth vector fields $Z=(X,Y)$ defined as \begin{equation} \label{sys1}
Z(p)=\begin{cases} X(p), & \mbox{if } \, p \in \U^+ \\ Y(p), & \mbox{if } \, p \in \U^- \end{cases}.
\end{equation}

The dynamics on each open set $\U^\pm$ is given by the smooth vector fields $X$ and $Y$, respectively.  The submanifold $\s$ is called {\it discontinuity curve} or {\it switching curve} and we assume that the dynamics over $\s$ is given by the Filippov's convention. More details about the Filippov's convention can be found in \cite{Filippov}. The piecewise smooth vector field defined in this way is called a {\it Filippov system.}

Due to its importance on applications, Filippov systems have been largely studied in the recent years. There are a huge number of works focusing on local and global aspects of these systems. For some works dedicated to planar Filippov systems, one can see \cite{Kuznetsov}, \cite{mst} and \cite{dercole} and for examples on higher dimensions, look at \cite{mike1} and \cite{dercole1}.

Concerning the study of the generic behavior for planar Filippov systems, it was firstly made by Kozlova (\cite{koz}). 
In \cite{Kuznetsov}, Kuznetsov at al., classified and studied all the codimension one bifurcations and also some global bifurcations. Guardia, Seara and Teixeira, in \cite{mst}, complemented these works presenting a rigorous proof of the theorem which classifies the set of the local $\s-$structural stable Filippov systems and revisited the codimension one bifurcations. In addition, they gave a preliminary classification of codimension two bifurcations. 

Using the concepts of local $\s-$equivalence and weak equivalence of unfoldings, in \cite{cjt} the authors revisited the codimension one generic local bifurcations of Filippov systems, presenting a rigorous classification of the generic fold-fold singularities set $\Lambda^F$, as well as a formal study of the versal unfoldings of each singularity. 
Moreover, they have proved that $\Lambda^F$ is a codimension one embedded submanifold of $\mathcal{Z}$. \newline

The most part of the works on piecewise smooth systems are devoted to study switching surfaces which are regular curves or surfaces. However, using the same definition of piecewise smooth vector fields, one can consider $\s$ as the  union of two  codimension one submanifolds which intersect transversely at the origin. 

In this context, we are interested in the particular subject: $2D$ systems as in \ref{eq:intro} for which $f(x_1,x_2)=x_1 \cdot x_2$. We understand that a systematic programme towards the bifurcation theory for such systems are currently emergent.

In this case,  the origin is a non-degenerated critical point of $f$ and $\s=f^{-1}(0)$ is a degenerate hyperbole. Thus $\s$ can be seen as the union of two lines that intersect transversally at the origin. The piecewise smooth vector field $Z=(X,Y)$ is given exactly as in $\ref{sys1}$ and in this case, we assume that the Filippov's convention is valid in  $\s \setminus \{\0 \}$.  

Piecewise dynamical systems naturally arise in the context of many applications.  In this direction our approach is motivated by equations expressed as
$$ \ddot{ \mathbf{x}} + a \dot{\mathbf{x}} = \sgn{\mathbf{x}\cdot f(\mathbf{x},\dot{ \mathbf{x}}}));
$$ 
that are commonly found in many fields such as Control Theory and Engineering. In \cite{Bar} problems involving asymptotic stability  of such systems are fairly discussed. In this book the author presents an stabilization problem that can be solved provided a discontinuity of this type is introduced in the system. In \cite{dieci}, the authors propose a special choice for the sliding vector field at the intersection of the two manifolds.

Let $\Omega$ be the set of all piecewise vector fields defined as above. In this work, our aim is to describe rigorously the set of the locally  $\s-$structurally stable vector fields having this kind of switching set and their codimension one bifurcations. The structure of this work is as follows:

In \autoref{sec1} we recall the objects we are going  to work with, as the trajectories, tangencies,  notion of local $\s-$equivalence and weak equivalences of unfoldings.

Section \ref{sec:stability} is devoted to give a classification of the set $\Omega_0$ composed by the locally $\s-$structurally stable vector fields in $\Omega$. To give this classification we establish the generic conditions which are necessary to $Z \in \Omega$  be locally $\s-$structurally stable. Moreover, in each case we construct the $\s-$equivalence between $Z \in \Omega_0$ and its corresponding ``normal form''. 

Once we have classified the set $\Omega_0 \subset \Omega$, in \autoref{sec:cod1} we describe the set $\Xi_1 \subset \Omega_1 = \Omega \setminus \Omega_0$ composed by the vector fields $Z$ which have a codimension one bifurcation at the origin. In order to get this result, we establish some conditions for $Z \in \Xi_1$ and show that $\Xi_1$ is open in $\Omega_1$ (endowed with the induced topology of $\Omega$). More precisely, $\Xi_1$ is an embedded codimension one submanifold of $\Omega$. Finally, we show that for each $Z \in \Xi_1$ the unfoldings which are transverse to $\Xi_1$ at $Z$ are weak equivalent.

\section{First definitions and results} \label{sec1}

Let $f: \R^2,\0 \rightarrow \R,0$ be a $C^r$ smooth function such that $f(\0)=0$ and that $0$ a non degenerate critical point. The purpose of this work is to study the generic singularities of planar piecewise vector fields $Z$ which discontinuity set is given by the zeros of the map $f(x_1,x_2)$.

As it is known that there are coordinates around the origin such that  $f$ can be written as $f(x_1,x_2)=x_1^2 \pm x_2^2$. In this paper we will study the case $f(x_1,x_2)=x_1^2 - x_2^2$, or equivalently, $f(x_1,x_2)=x_1 \cdot x_2$.

\bigskip

In this direction, let $X$ and $Y$ be smooth $\mathcal{C}^r$, $r \geq 1$ vector fields defined in a bounded neighborhood $\U$ of the origin. 

Let $f: p=(x_1,x_2) \in \U \mapsto f(p)=x_1 x_2 \in \R$. The set $\s = f^{-1}(\0)$ is an algebraic variety and splits $\U$ as the closure of the regions  $\U^+=\{ p \in \R^2 : f(p)>0 \}$ and $\U^-=\{ p \in \R^2 : f(p)<0 \}$. Moreover, the region $\U^+$ can be decomposed as $\U^+_{+} = \U^+ \cap \{ x_2 >0 \} $ and $\U^+_{-} = \U^+ \cap \{ x_2 <0 \}$, as can be seen in \autoref{fig:disc01}. Analogously, we define the regions $\U^-_+$ and $\U^-_-.$

Let $\Omega$ be the set of all piecewise vector fields defined as: \begin{equation} Z(p)=  \begin{cases}
		X(p), & \mbox{if} \quad  p \in \U^+ \\
		Y(p), & \mbox{if} \quad  p \in \U^-
	\end{cases}.
\end{equation}
The set $\s$ is the discontinuity set or switching set. Observe that we can write $ \s = \s _1 \cup \s _2$ with $\s_1 = \{ (x_1,x_2) \in \s : x_1=0 \}$ and $\s_2 =  \{ (x_1,x_2) \in \s : x_2=0 \}$. Moreover, $\s_1 = \overline{\s_1^+ \cup \s_1^-}$ where $\s_1^+ = \{ (0,x_2) \in \s_1 : x_2 >0 \}$ and $\s_1^- = \{ (0,x_2) \in \s_1: x_2 <0 \}$. Similarly, one can write $\s_2 = \overline{\s_2^+ \cup \s_2^-}$.

\begin{figure}[!htbp]
	\centering
	\begin{tiny}
		\def\svgwidth{4cm}
		\input{./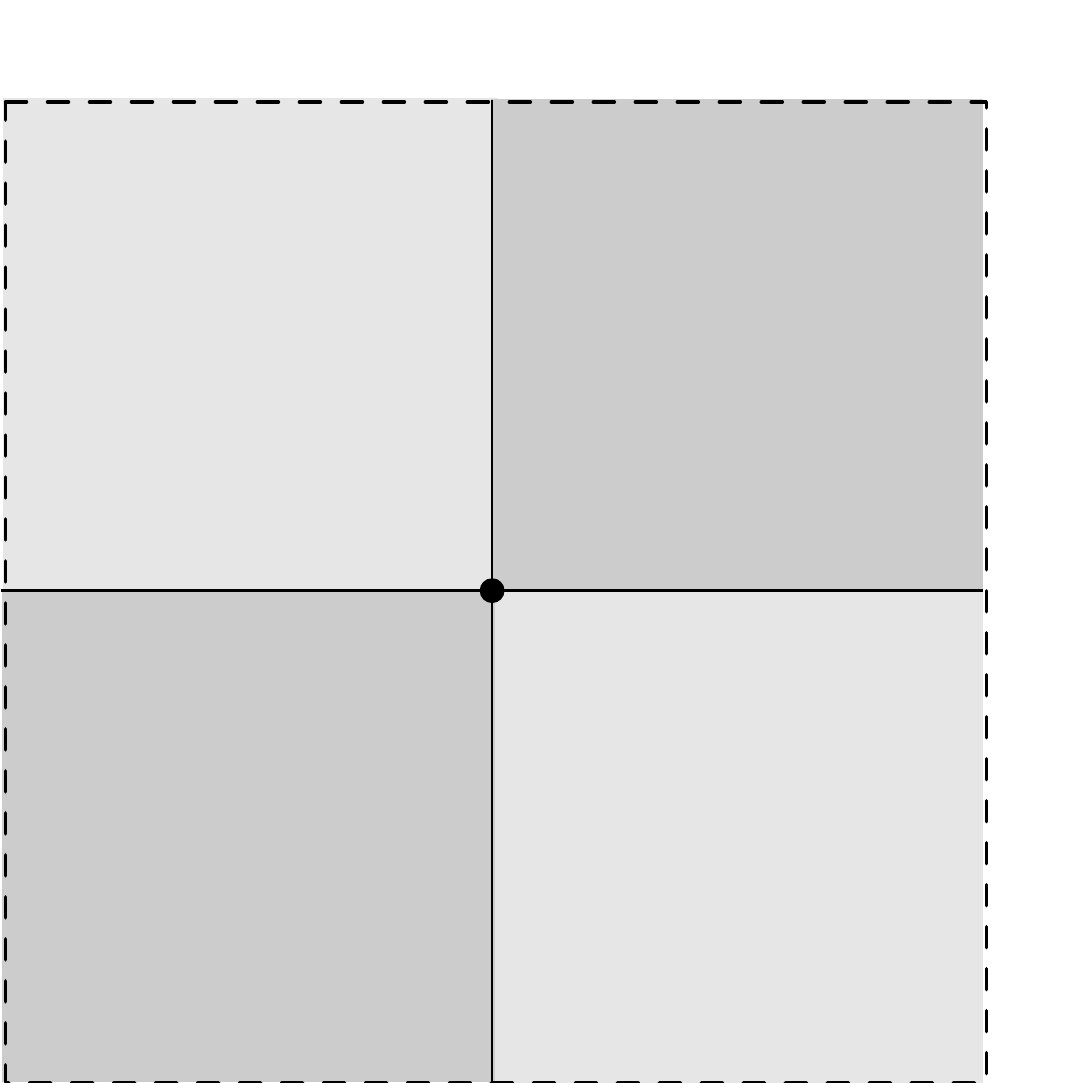_tex}
	\end{tiny}
	\caption{The decomposition of the domain $\U$.}
	\label{fig:disc01}
\end{figure}
Each $\s_i$, for $i=1,2$, can be decomposed as the closure of the crossing, sliding and escaping regions as follows: \begin{eqnarray*}
	\s_i^c &=& \left\{ p \in \s_i : X_i \cdot Y_i (p) >0 \right\}, \\
	\s_i^s &=& \left\{ p \in \s_i^+ : X_i(p)<0 , \, Y_i(p)>0 \right\} \cup \left\{ p \in \s_i^- : X_i(p)>0 , \, Y_i(p)<0 \right\}, \\
	\s_i^e &=& \left\{ p \in \s_i^+ : X_i(p)>0 , \, Y_i(p)<0 \right\} \cup \left\{ p \in \s_i^- : X_i(p)<0 , \, Y_i(p)>0 \right\}.
\end{eqnarray*} 
Then the crossing, sliding and escaping regions in $\s$ are the union of the corresponding regions in $\s_1$ and $\s_2$. 

In the regions $\s_i^{s,e}$, for $i=1,2$, we define the sliding vector field:
\begin{equation} 
	Z_i^s(p)=\displaystyle{\frac{1}{Y_i(p)-X_i(p)} [ Y_i(p)X_j(p) -X_i(p)Y_j(p)]} \bigg|_{\s_i}, \label{eq:slidingdef} 
\end{equation} where $i,j=1,2$ and $i\neq j.$

\begin{defi}
	Fix $i=1,2$. The point $p\in \s_i^{s,e}$ is a pseudo-equilibrium if $Z_i^s(p)=0$ and it is hyperbolic if $(Z_i^s)'(p) \neq 0$.
\end{defi}


Since we are interested on low codimension singularities, we focus our attention just to one kind of tangency, the fold point, which is defined below.

\begin{defi} \label{def:regfold}
	The point $p \in \s_i$ is a {\bf fold point} of $X$ in $\s_i$ if $X_i(p)=0$ and $\displaystyle{X_j (p) \cdot \frac{\partial}{\partial x_j}X_i(p) \neq 0}$ for $i,j=1,2$ and $i\neq j$. 
	Moreover, $p \in \s$ is a {\bf regular-fold} of $X$ in $\s_i$ if is a fold point for $X$ in $\s_i$ and $Y$ is transverse to $\s$ at the origin, that is, $Y_l(\0) \neq 0$ for $l=i,2$.
	
	Analogously, we define a fold point and a regular fold for $Y$. 
\end{defi}

\begin{figure}[!htbp]
	\centering
	\begin{tiny}
		\def\svgscale{0.35}
		\subfigure[]{\input{./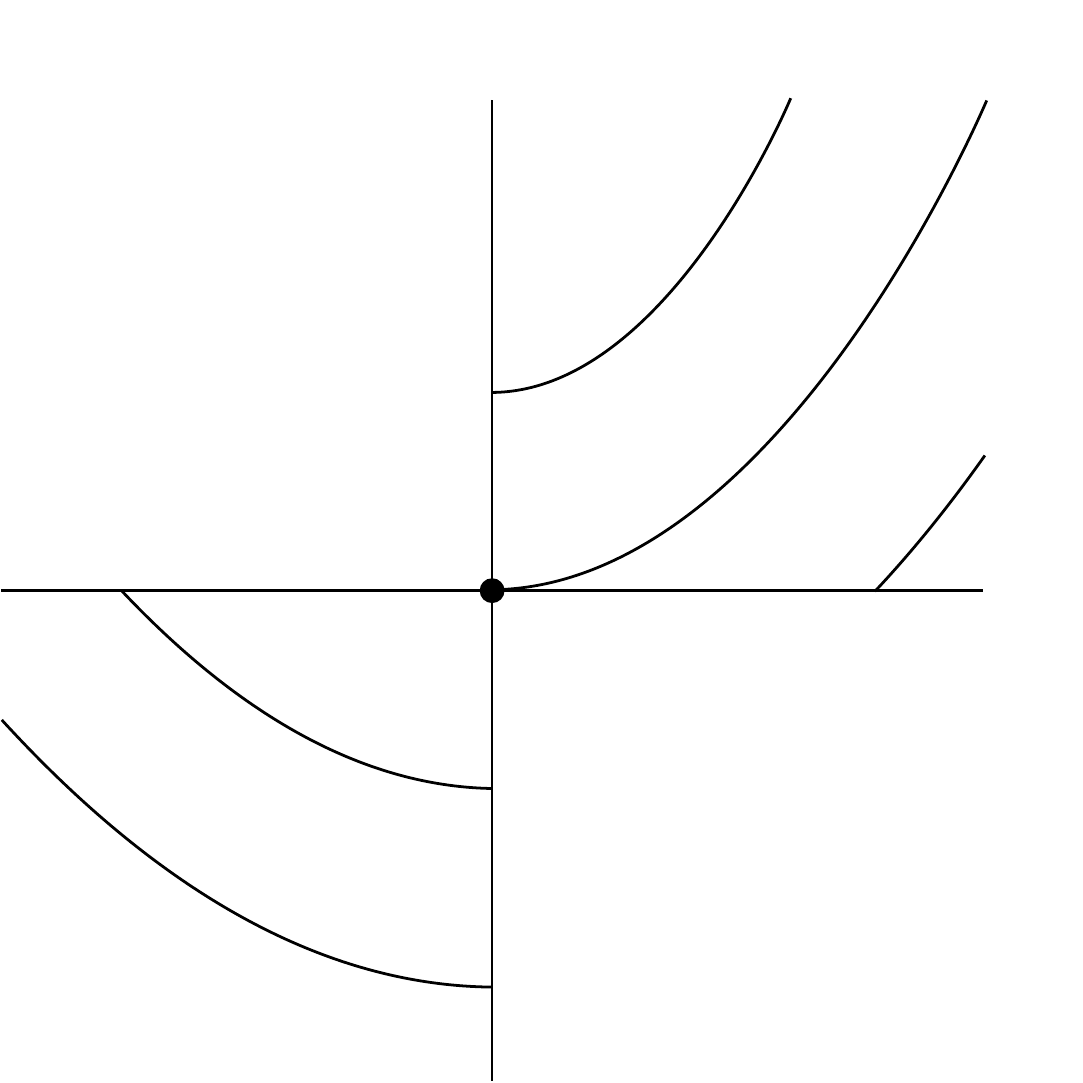_tex} \label{fold02}}
		\hspace{2cm}
		\def\svgscale{0.35}
		\subfigure[]{\input{./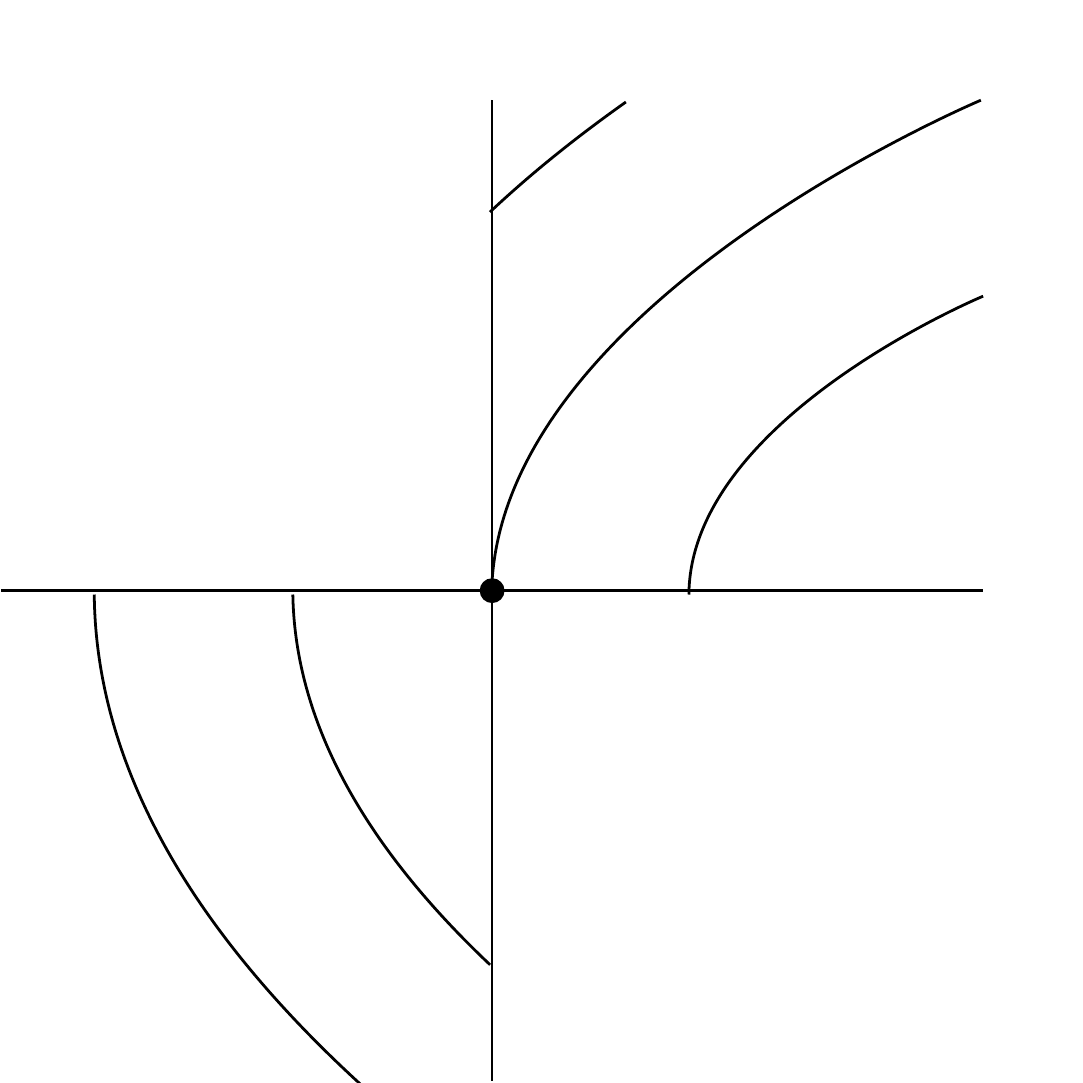_tex} \label{fold01}}
	\end{tiny}
	\caption{The origin is a fold point of $X$. \subref{fold02} $X_2(\0)=0$ and $X_1 \cdot \frac{\partial}{\partial x_1} X_2 (\0) >0 $; \subref{fold01} $X_1(\0)=0$ and $X_2 \cdot \frac{\partial}{\partial x_2} X_1 (\0) >0 $.}
	\label{fig:folds}
\end{figure}


Now we define the trajectories of $Z$ through points of $\U$, following \cite{mst}. Let us denote by $\varphi_X(t;p)$ the flow of a regular vector field $X$. In order to preserve the uniqueness of orbits, we assume that, if $p \in \U^\pm$ is such that the curve $\{\varphi_{X,Y}(t;p); t \in \R \} \pitchfork \s^{s,e}$, then the trajectories of $X$ and $Y$ through $p$ are relatively open, that is, they do not reach $\s^e \cup \s^s$ in finite time.

Next definition gives the trajectories through a point of $\U \setminus \{ 0 \}$.

\begin{defi} Let $p \in \U \setminus \{ 0\}$, then its trajectory $\varphi_Z(t;p)$ is given by:
	\begin{itemize}
		\item If $p \in \U^+ \cup \U^-$ then its trajectory is given by the trajectory of $X$ or $Y$, respectively.
		\item If $p \in \s^c=\s_{1}^c \cup \s_{2}^c$ then its trajectory is the concatenation of its respective trajectories in $\U^+$ and $\U^-$;
		\item If $p \in \displaystyle{(\s^s_i \cup \s^e_i) \setminus \{ 0 \}}$ for $i=1 \mbox{ or } 2$, then its trajectory is given by $\varphi_{Z^s_i}(t;p)$, where $Z_i^s$ is given in \ref{eq:slidingdef};
		\item If $p \in \partial \s^e_i \cup \partial \s^s_i \cup \partial \s^c_i$ and if $\displaystyle{\lim_{q \rightarrow p^-} \varphi_Z(t;q) = \lim_{q \rightarrow p^+} \varphi_Z(t;q)}$, then $\varphi_Z(t;p)=\displaystyle{\lim_{q \rightarrow p} \varphi_Z(t;q)}$. These points are \textbf{regular tangency points}.
		\item If $p \in  \partial \s^e_i \cup \partial \s^s_i \cup \partial \s^c$ does not satisfy the last condition, then $\varphi_Z(t;p)= \{ p \}$ and it is called by \textbf{singular tangency points}.
	\end{itemize}
\end{defi}

Next definition gives the trajectory of $p=\0.$

\begin{defi} Let $\{ \0 \} \in \s$, then its trajectory $\varphi_Z(t;\0)$ is described below: 
	\begin{itemize}
		\item If $\{ \0 \} \in \s=\overline{\s^c}=\overline{\s_1^c \cap \s_2^c}$ then there is only one trajectory of $X$ or $Y$ which cross the origin and we define $\varphi_Z(t;\0)$ as being this trajectory;
		\item If $\{ \0 \} \in \overline{\s^e_i \cup \s^s_i} \cap \overline{\s^c_j }$, $i,j =1,2$ and $i \neq j$, then the trajectory $\varphi_Z(t;\0)=\varphi_{Z^s_i}(t; \0)$;
		\item If $\{ \0 \} \in \cap_{i=1}^{2} \overline{\s^e_i \cup \s^s_i}$ then $\varphi_Z(t;\0)= \{ \0 \}$;
	\end{itemize}
\end{defi}

After the last two definitions, we can define the singularities of $Z \in \Omega$.

\begin{defi} The singularities of $Z \in \Omega$ are:
	\begin{itemize}
		\item $p \in \overline{\U^\pm}$ which are singularities of $X$ or $Y$, respectively;
		\item $p \in \s^{e,s}_i$ such that $Z^s_i(p)=0$;
		\item $p \in \partial \s^e_i \cup \partial \s^s_i \cup \partial \s^c_i$, for $i=1,2$, which are singular tangency points;
		\item $\{ \0 \} \in \s$ when both $Z^s_i$ are defined in a neighborhood of the origin, $i=1,2$. 
	\end{itemize}
\end{defi}

In the sequel we define some different types of ``periodic'' orbits which can appear in piecewise smooth systems, once again we follow the definitions given in \cite{mst}.

\begin{defi}
	A regular periodic orbit is a regular orbit $\gamma = \{ \phi_Z(t; p): t \in R \}$, which belongs to $\U^+ \cup \U^- \cup \s^c$ and satisfies $\phi_Z(t+T;p)=\phi_Z(t;p)$ for some $T >0$.
\end{defi}

\begin{figure}[!htbp]
	\centering
	\begin{tiny}
		\def\svgscale{0.4}
		\subfigure[A regular periodic cycle]{\input{./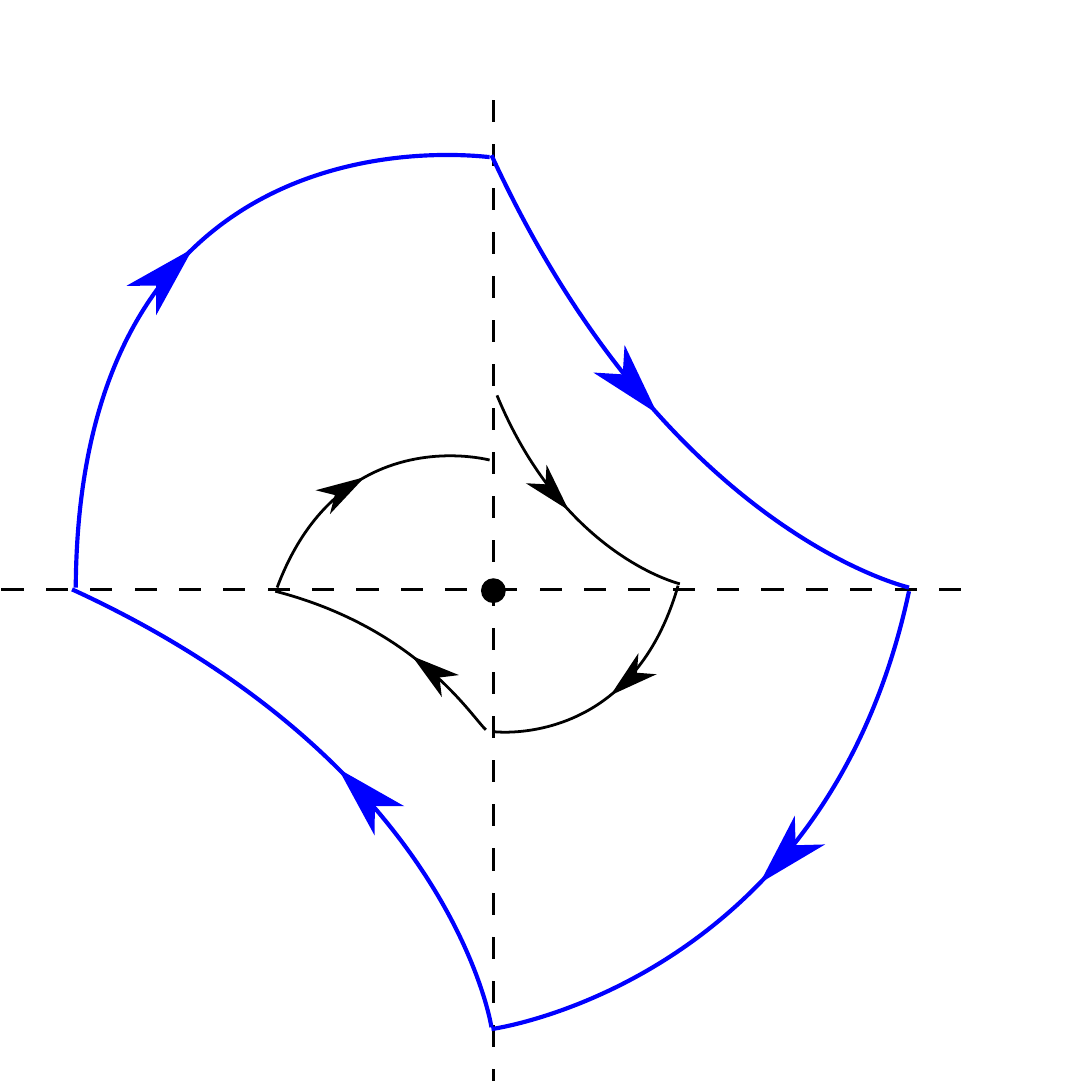_tex} \label{fig:cro1}}
		\qquad \qquad
		\def\svgscale{0.4}
		\subfigure[A periodic cycle]{\input{./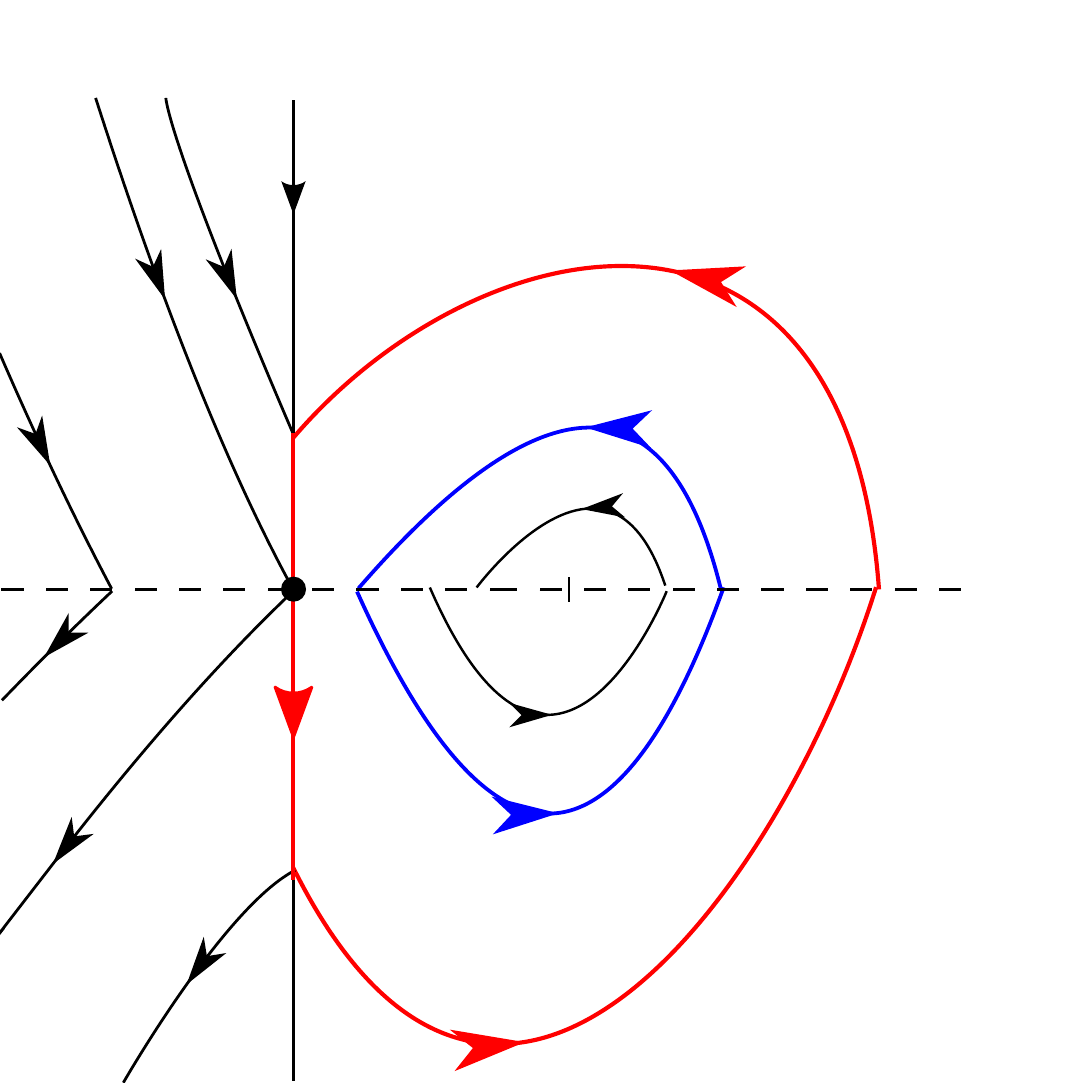_tex} \label{fig:cro}}
		\qquad \qquad
		\def\svgscale{0.4}
		\subfigure[A pseudo-cycle]{\input{./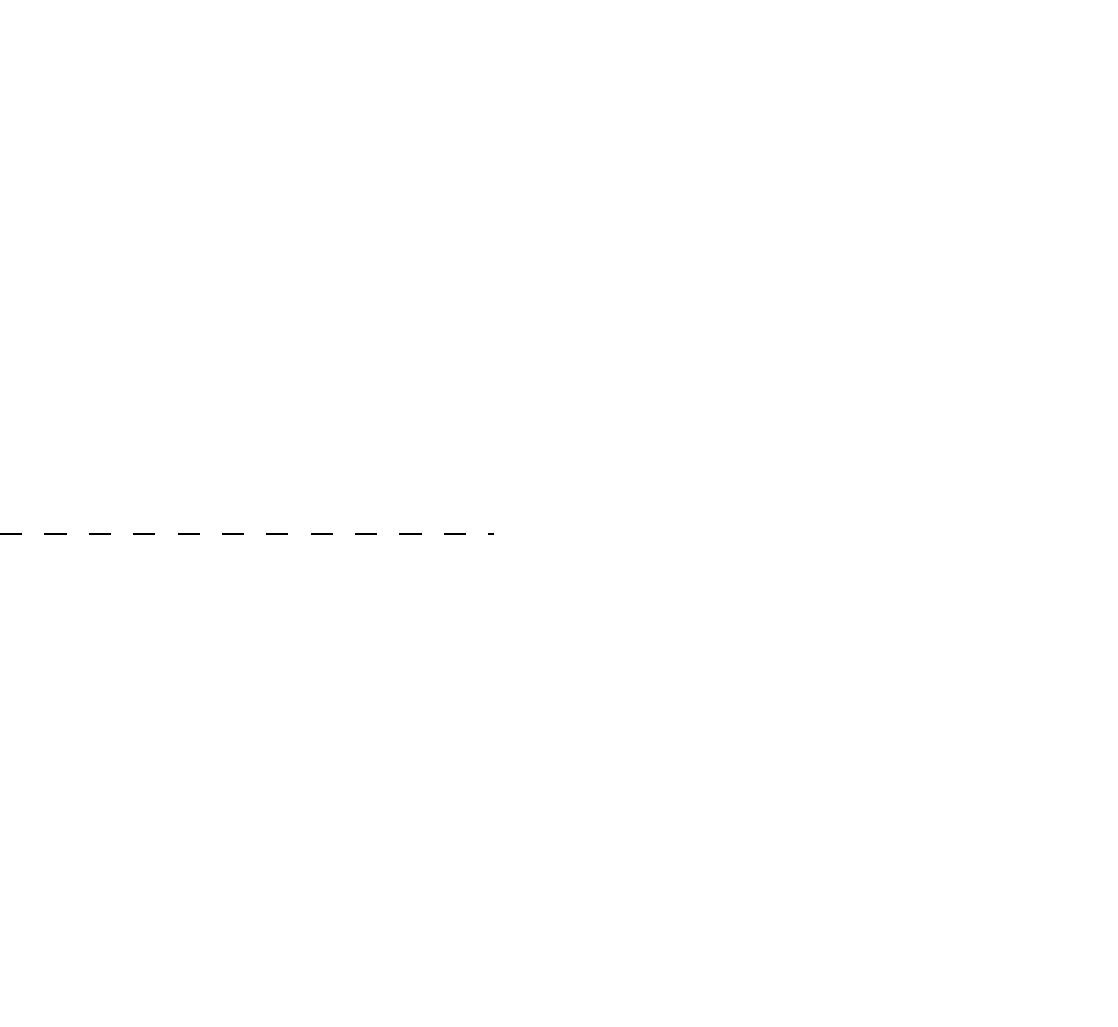_tex} \label{fig:cro2}}
	\end{tiny}
	\caption{Examples of periodic orbits containing the origin.}
	\label{fig:crorbits}
\end{figure}

\begin{defi}
	A cycle is a closed curve formed by a finite set of pieces of orbits $\gamma_1, \dots, \gamma_n$ such that $\gamma_{2k}$ is a piece of sliding orbit, $\gamma_{2k+1}$ is a maximal regular orbit and the departing and arrival points of $\gamma_{2k+1}$ belong to $\gamma_{2k}$ and $\gamma_{2k+2}$, respectively. We define the period of the cycle as the sum of the times that are spent in each of the pieces of orbit $\gamma_i, \, i=1, \dots,n.$
\end{defi}

\begin{defi}
	We define a pseudo-cycle as the closure of a set of regular orbits $\gamma_1, \dots ,\gamma_n$ such that their edges, that is the arrival and departing points, of any $\gamma_i$ coincide with one of the edges of $\gamma_{i+1}$
	and one of the edges of $\gamma_{i+1}$ (and also between $\gamma_1$ and $\gamma_n$) forming a curve homeomorphic to $\mathbb{S}^1 = \R/ \mathbb{Z}$, in such a way that in some point coincide two departing or two arrival points.
\end{defi}

In Figures \ref{fig:crorbits} \subref{fig:cro1} and \subref{fig:cro} the curves $\gamma_1$ and $\gamma_2$ are examples of regular periodic orbits. As we will see in \autoref{sec:stability} and \ref{sec:cod1}, this kind of orbits do not appear in low codimension bifurcations. For example, the phase portrait sketched in \autoref{fig:crorbits} \subref{fig:cro1}, can happen when the origin is a saddle for $X$ (with non admissible eigenspaces, that is, the eigenspaces $V_{1,2} \subset \U^-$) and a focus for $Y$, which is a bifurcation of codimension at least four. The orbit $\gamma$ in \autoref{fig:crorbits}\subref{fig:cro}, gives us an example of a periodic cycle. These orbits can appear, for example, in the unfolding a codimension 2 fold-fold bifurcation.

In figure \ref{fig:cro2}, $\gamma_3$ is an example of pseudo-cycle. Observe that pseudo-cycles are not real closed orbits but they are preserved by $\s-$equivalences (see definitions \ref{def:s1} and \ref{def:s2}) and it is the unique type of recurrence containing the origin which appears in the unfoldings of some codimension one bifurcations.

\begin{figure}[!htbp]
	\centering
	\begin{tiny}
		\def\svgscale{0.35}
		\input{./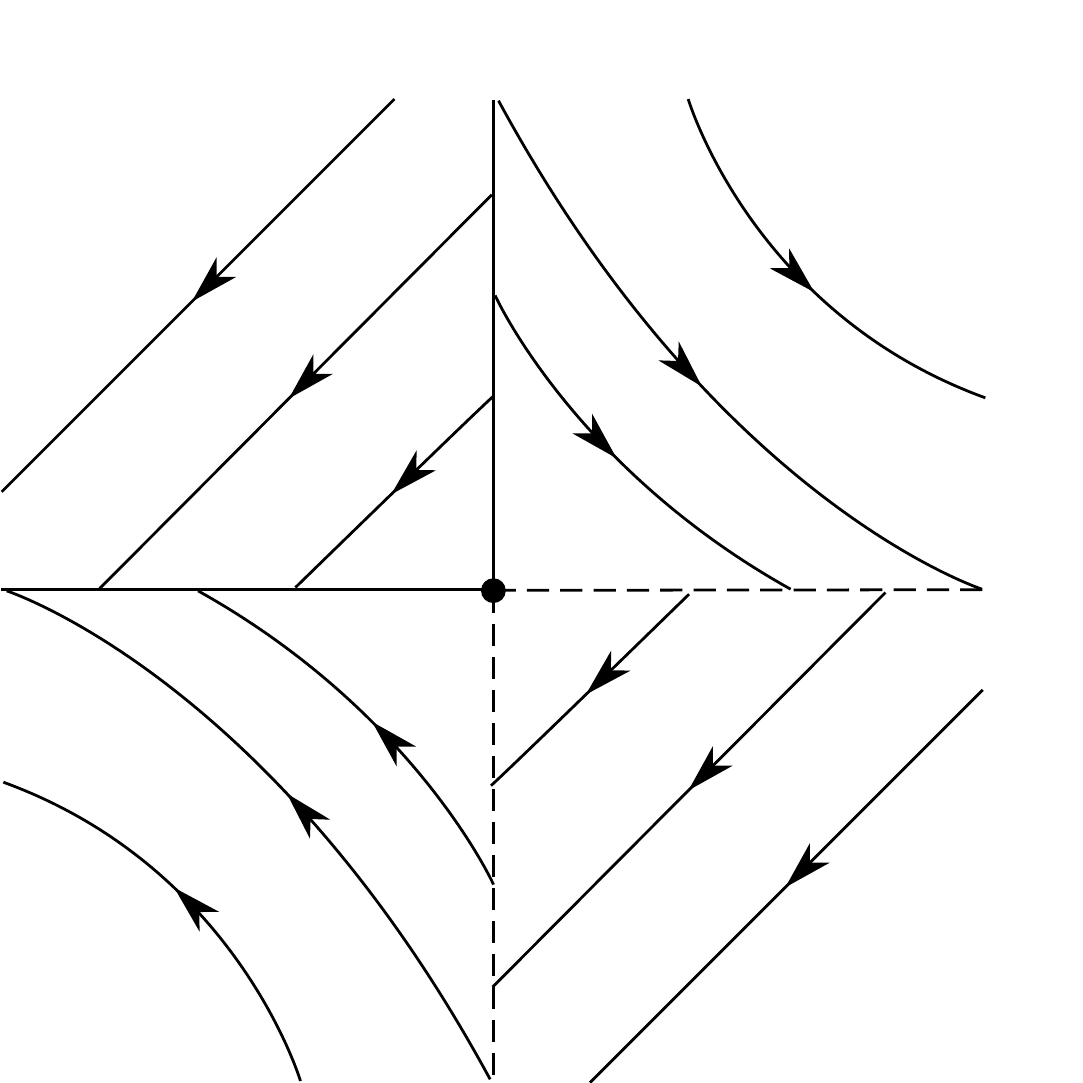_tex} 
	\end{tiny}
	\caption{$Z$ is transient, in this picture, the origin can be a saddle point for $X$ with non admissible eigenspaces and $Y$ is transverse to $\s$ at the origin.}
	\label{fig:trans1}
\end{figure} 

\begin{defi}We say that $Z \in \Omega$ is transient in $\U^\pm$ if for every $p \in \overline{\U^\pm}$ there exist $t_1(p), t_2(p) \in \R$ satisfying  $\varphi_{Z}(t_1(p);p) \in \s_1$, $\varphi_{Z}(t_2(p);p) \in \s_2$ and $\varphi_{Z}(t;p) \in \U^\pm$ for all $t \in [\min \{ t_1(p),t_2(p) \}, \linebreak \max \{ t_1(p),t_2(p) \} ]$. We say that $Z$ is transient if it is transient in $\U^+$ and $\U^-$.
\end{defi}

Let $Z \in \Omega$  transient. For each $p \in \s_1$ there exist a unique $t_X(p) \in \R$ such that $\varphi_{X}(p;t) \in \overline{\U^+}$ for all $t \in [\min \{ 0,t_X(p)\},\max \{ 0,t_X(p)\}]$ satisfying $\varphi_X(t_X(p);p) \in \s_2$. We have defined a diffeomorphism $$\phi_X: p \in \s_1 \mapsto \varphi_X(t_X(p);p) \in  \s_2.$$

By the Implicit Function Theorem, the function $t_X: p \in \s_1 \mapsto t_X(p) \in \R$ is a differentiable map. Analogously, we define  $$\phi_Y: p \in \s_2 \mapsto \varphi_Y(t_Y(p);p) \in  \s_1.$$

And finally we define the \textit{first return map of $Z$} by \begin{equation} \label{eq:poincare}
	\begin{array}{llll} 
		\phi_Z: & \s_2^- 	& \rightarrow 	& \s_2^- \\ 
		& p 	& \mapsto		&( \phi_X \circ \phi_Y)^2 (p)
	\end{array}
\end{equation}

\noindent which is clearly a diffeomorphism, since it is the restriction of a diffeomorphism to the cross section $\s^-_2$. 

\begin{figure}[!htbp]
	\centering
	\begin{tiny}
		\def\svgscale{0.35}
		\input{./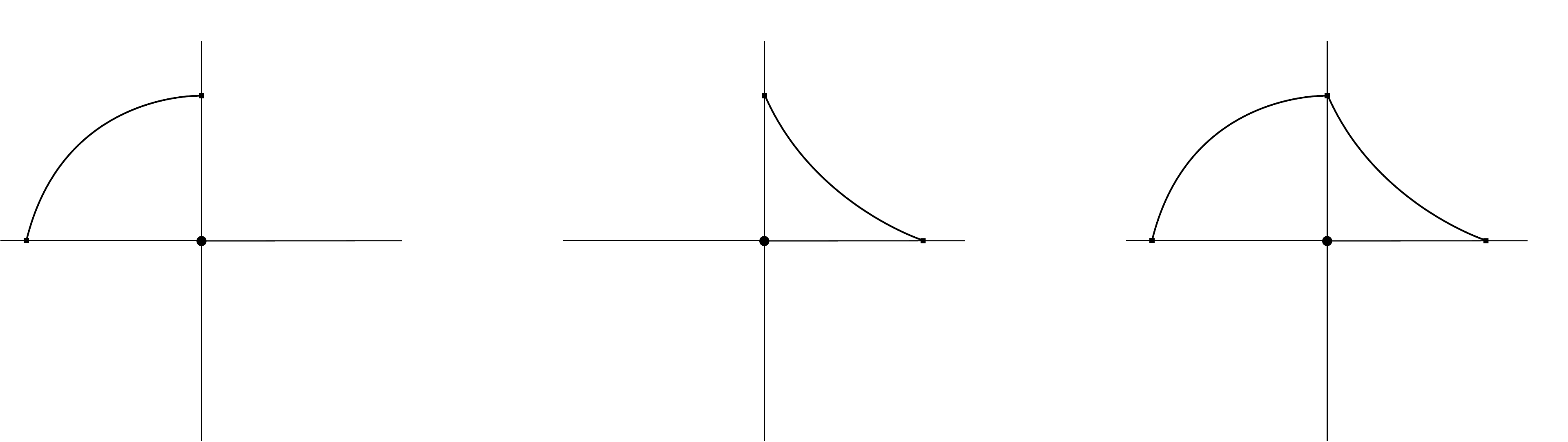_tex} 
	\end{tiny}
	\caption{The first return map associated to the vector field $Z$.}
	\label{fig:trans}
\end{figure}




\begin{rem}  
	In fact, the map $\phi_Z$ is an one dimensional map, since its second coordinate is always zero. Thus we will write $\phi_Z$ as a projection to the first coordinate of the first return map.
\end{rem}

Observe that if $Z$ is transient, the origin is always a fixed point for $\phi_Z$ since $t_X(\0)=t_Y(\0)=0$. Moreover, by unicity of solutions for flows in the plane, $\phi_Z$ is an increasing function and therefore, $\phi'_Z$ is always positive.

In our context, sometimes the first return map will not have a real dynamical meaning, since it is defined for any transient vector field $Z$. It can happen that a trajectory of $Z$ through a point $p\in \s_2^-$ do not reach again the cross section $\s_2^-$ neither in backward nor in forward time, see \autoref{fig:trans1}. But even in these cases the first return map will be important in order to detect the appearance of pseudo-cycles.

\begin{defi} 
	Let $Z \in \Omega$ be transient and let $\phi_Z$ be its first return map associated to $Z$ at the origin. Then the origin is ``geometrically stable'' if $0<\phi_Z'(0)<1$ and it is ``geometrically unstable'' if $\phi_Z'(0)>1$. 
	When $\s=\overline{\s^c}$ then the dynamics of $Z$ around the origin is similar to a focus, in this case we say that the origin is a ``focus''. Otherwise, $\0 \in \overline{\s^{e,s}_i \cap \s_j^c}$ the trajectories of an initial condition does not reach $\s_2^-$ again and the origin will be called ``geometric-focus''. 
\end{defi}

\bigskip


Now we are going to start with the definitions of local $\s-$structural stability and codimension $k$ bifurcations. 
It is well known that the set $\mathfrak{X}=\mathfrak{X}^r(\mathcal{U}),$ $\bar{\mathcal{U}}$ compact, of the germs of vector fields of class $\mathcal{C}^r, r \geq 1$ endowed with the $\mathcal{C}^r-$topology is a Banach space. Therefore, $\Omega=\mathfrak{X} \times \mathfrak{X}$ is also a Banach space. Consequently, $\Omega$ is a Banach manifold. 

In the sequel we will establish a relation between local $\s-$structural stability in our context with some special submanifolds of $\Omega.$

\begin{defi} \label{def:s1}
	Let $Z$ and $\tilde{Z} \in \Omega$, defined in $\U$ and $\tilde{\U}$ neighborhoods of the origin, with discontinuity sets $\s$ and $\tilde{\s}$, respectively. We say that $Z$ and $\tilde{Z}$ are locally $\s-$equivalent if there exist neighborhoods $\mathcal{U}_0, \tilde{\mathcal{U}_0}$ of the origin and an orientation preserving homeomorphism $h: \mathcal{U}_0 \rightarrow \tilde{\mathcal{U}}_0$ which maps trajectories of $Z$ in trajectories of $\tilde{Z}$ and sends $\s$ in $\tilde{\s}$.
\end{defi}

\begin{defi} \label{def:s2}
	We say that $Z \in \Omega$ is locally $\s-$structurally stable at the origin if there exists a neighborhood $\mathcal{V}_Z \subset \Omega$ such that if $\tilde{Z} \in \mathcal{V}_Z$ then is locally $\s-$equivalent to $Z$.
	
	Let $\Omega_0$ denote the set of all piecewise systems in $\Omega$ which are locally $\s-$struc\-tu\-ral\-ly stable. 
\end{defi}

When $Z$ is not locally $\s-$structurally stable at the origin, we say that $Z$ belongs to the bifurcation set $\Omega_1 = \Omega \setminus \Omega_0$. 

\begin{defi}
	Let $Z \in \Omega$. A $m-$parameter unfolding of $Z$ is a smooth map $\gamma: \delta=(\delta_1,\cdots,\delta_m) \in (-\delta_0,\delta_0)^m \mapsto Z_\delta \in \Omega$ with $\delta_0 \ll 1$, $m \geq 1$ and satisfying $\gamma(0)=Z_0=Z$. We usually denote an unfolding of $Z$ by $Z_\delta$. 
\end{defi}

\begin{defi}
	Let $Z, \tilde{Z} \in \Omega$. We say that two unfoldings of $Z_\delta$ and $\tilde{Z}_{\tilde{\delta}}$ are locally weak equivalent if there exists a homeomorphic change of parameters $\mu(\delta)$, such that, for each $\delta$ the vector fields $Z_\delta$ and $\tilde{Z}_{\mu(\delta)}$ are locally $\s-$equivalent. Moreover, given an unfolding $Z_\delta$ of $Z$ it is said to be a versal unfolding if every other unfolding $Z_{\alpha}$ of $Z$ is locally weak equivalent to $Z_\delta$.
\end{defi}

%

\begin{defi} \label{def:cod1}
	A piecewise smooth vector field $Z \in \Omega_1$ has a codimension one singularity at the origin if it is locally $\s-$structural stable in the induced topology of $\Omega_1$. That is, if there exists an open set $\mathcal{V}_Z \subset \Omega$ such that $\tilde{Z} \in \mathcal{V}_Z \cap \Omega_1$ then $\tilde{Z}$ is locally $\s-$equivalent to $Z$ and any $1-$parameter unfoldings of $Z$ and $\tilde{Z}$ are locally weak equivalent. We denote by $\Xi_1$ the set of all codimension one bifurcations in $\Omega.$
\end{defi}

One can define $\Xi_k$, the set of all $Z \in \Omega$ having a codimension $k$ bifurcation at the origin, recursively. Let  $\Omega_k = \Omega_{k-1}\setminus \Xi_{k-1}$ then $\Xi_k$ is the subset of $\Omega_k$ composed by the $\s-$structurally stable in $\Omega_k$. 

\begin{section}{\texorpdfstring{Local $\s-$}{s-}structural stability 
	} \label{sec:stability}
	
	The aim of this section is to describe the set $\Omega_0 \subset  \Omega$ of all the vector fields which are locally $\s-$structurally stable near the origin.  We use the definitions of local $\s-$equivalence and local $\s-$structural stability stated previously.  We will prove the following theorem: 
	
	\begin{theo}[$\s-$structural stability on $\Omega$] \label{theo:A} 
		Denote by $\Omega_0 \subset  \Omega$ the set of all the vector fields which are locally $\s-$structurally stable near the origin.
	 Let $Z \in \Omega$. Then $Z \in \Omega_0$ if, and only if, $Z$ satisfies one of the following conditions:
		\begin{enumerate}[A.]
			\item $X_i(\0).Y_i(\0)>0$, for $i=1,2$; 
			\item $X_i(\0).Y_i(\0)<0$, for $i=1,2$ and $\det{Z}(\0)=(X_1\cdot Y_2 - X_2 \cdot Y_1)(\0) \neq 0$;
			\item $X_i(\0).Y_i(\0)>0$, $X_j(\0).Y_j(\0)<0$ for $i=1,2, \, i \neq j$. In addition, when $Z$ is transient, it satisfies \linebreak$\displaystyle{\alpha_Z=\left( \frac{X_1 \cdot Y_2 (\0)}{X_2 \cdot Y_1(\0)} \right)^2 \neq 1}.$
		\end{enumerate}
		Moreover, the subset $\Omega_0$ is an open and dense in $\Omega$, therefore local $\s-$structural stability is a generic property in $\Omega$.
	\end{theo}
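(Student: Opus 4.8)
The plan is to prove the equivalence by first extracting the non-degeneracy conditions that structural stability forces, then constructing explicit $\s$-equivalences in each admissible case, and finally reading off openness and density from the open nature of the defining inequalities. First I would show that any $Z\in\Omega_0$ must satisfy $X_i(\0)\neq0$ and $Y_i(\0)\neq0$ for $i=1,2$. Indeed, if some component vanishes, say $X_1(\0)=0$, then the origin is a tangential contact of $X$ with $\s_1$ sitting exactly at the corner; perturbing the value of $X_1$ by $\pm\e$ moves this contact off the corner into $\s_1^+$ or into $\s_1^-$ and rearranges the local partition of $\s_1$ into crossing, sliding and escaping arcs in two topologically distinct ways, so $Z$ cannot be locally $\s$-structurally stable. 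Once all four values are nonzero, the sign pattern of the products $X_i(\0)Y_i(\0)$ forces exactly one of the configurations A, B, C.

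Next I would establish the necessity of the extra conditions in B and C. In case B a vanishing determinant $\det Z(\0)=0$ makes the origin a non-hyperbolic pseudo-equilibrium of the sliding fields $Z_i^s$ defined in \ref{eq:slidingdef}, which is destroyed or changes type under an arbitrarily small perturbation; hence $\det Z(\0)\neq0$ is required. In the transient sub-case of C, a direct computation of the transition maps $\phi_X$ and $\phi_Y$ yields $\phi_Z'(0)=\alpha_Z$, so $\alpha_Z=1$ gives $\phi_Z'(0)=1$: the origin is then a non-hyperbolic fixed point of the first return map, carrying a one-parameter family of pseudo-cycles, whose perturbations become geometrically stable ($\alpha_Z<1$) or geometrically unstable ($\alpha_Z>1$) and are therefore mutually inequivalent. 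This rules out every configuration lying outside A, B, C. For the converse I would treat the three cases separately and, in each, exhibit a representative normal form together with a local homeomorphism carrying orbits of $Z$ to orbits of the normal form and fixing $\s$. In case A both fields cross both branches transversally, so each of the four quadrant flows is conjugate to a constant field and the equivalence is assembled orbit-by-orbit along a small transversal circle. In case B the origin is a hyperbolic pseudo-equilibrium and I would separately match the sliding orbits on $\s_1,\s_2$ and the crossing orbits. In case C the origin is a (geometric) focus whose character is dictated by $\phi_Z'(0)=\alpha_Z$, and the equivalence is built from the first return map. The robustness point is that each defining condition is open, so every $\tilde Z$ in a small neighborhood $\mathcal{V}_Z$ lies in the same case with the same hyperbolicity and sign data; building the analogous normal-form equivalence for $\tilde Z$ and composing shows $Z$ and $\tilde Z$ are $\s$-equivalent, i.e. $Z\in\Omega_0$.

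Openness of $\Omega_0$ then follows at once: conditions A, B and C are cut out by strict inequalities on the values $X(\0)$ and $Y(\0)$ (the products $X_iY_i$, the determinant $\det Z(\0)$, and the invariant $\alpha_Z$), so each defines an open subset of $\Omega$ and $\Omega_0$ is their union, while transience, governed by the same transversality data, is locally constant. For density I would start from an arbitrary $Z$ and make an arbitrarily $C^r$-small perturbation: first add small constants to the components so that all $X_i(\0),Y_i(\0)\neq0$, placing $Z$ in A, B or C by its sign pattern, and then, if needed, perturb further to achieve $\det Z(\0)\neq0$ in case B or $\alpha_Z\neq1$ in the transient sub-case of C. Since each forbidden locus ($X_i(\0)=0$, $\det Z(\0)=0$, $\alpha_Z=1$) lies in a codimension-one subset of the jet space, such perturbations always exist, so $\Omega_0$ is dense and local $\s$-structural stability is generic.

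I expect the main obstacle to be the sufficiency constructions, and in particular case C: relating the abstract geometric-focus dynamics to the analytic invariant $\alpha_Z$ through the first return map, controlling the accumulation of orbits at the corner $\0$ where crossing and sliding regions meet, and verifying that the resulting homeomorphism extends continuously up to and including the origin. The necessity of $\alpha_Z\neq1$ — detecting the pseudo-cycles produced when $\phi_Z'(0)=1$ and showing they obstruct stability — is the most delicate dynamical point, whereas the remaining cases together with the openness and density arguments are comparatively routine.
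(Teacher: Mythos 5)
Your proposal follows essentially the same route as the paper: classification by the sign pattern of $X_i(\0)\cdot Y_i(\0)$, necessity of $\det Z(\0)\neq 0$ and $\alpha_Z\neq 1$ via explicit small perturbations, sufficiency via normal forms and orbit-matching homeomorphisms (with the transient sub-case of C handled through the first return map), and openness/density read off from the strict inequalities. Two local misstatements are worth correcting, though neither derails the argument: in case B, $\det Z(\0)=0$ does not make the origin a \emph{non-hyperbolic} pseudo-equilibrium (generically it is hyperbolic, cf.\ Proposition \ref{prop:pseudoeq}); the actual degeneracy is that the pseudo-equilibrium of both sliding fields sits exactly at the corner $\0$ common to $\s_1$ and $\s_2$, so a perturbation relocates or splits it onto the two branches in topologically inequivalent ways. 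Likewise, $\phi_Z'(0)=1$ does not by itself produce a one-parameter family of pseudo-cycles; the correct (and sufficient) point, which you also state, is that arbitrarily small perturbations yield both geometrically stable and geometrically unstable configurations, which are mutually inequivalent.
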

	
	We devote the rest of this section to prove this theorem.
	
	Lets consider $Z \in \Omega$, with $X$ and $Y$ transverse to $\s_1$ and $\s_2$ at the origin. It is clear that the transversality of the vector field with $\s$ at the origin is a necessary condition for local $\s-$structural stability of $Z$. In the sequel we will see that it is not a sufficient condition.

	Before we start the analysis of the behavior near the origin, we summarize some important facts about the sliding vector fields. By definition of the sliding vector fields expressed in \ref{eq:slidingdef}, as it was observed in \cite{SotoTei}, we have
	\begin{equation} \label{eq:slidingdetdef}
		Z_i^s(p)=h_i(p) \cdot \det{Z(p)}, \quad p \in \s_i,
	\end{equation}
	where $\det{Z(p)}= X_1(p)\cdot Y_2(p)-X_2(p)\cdot Y_1(p)$ and \begin{eqnarray}
		h_i(p)&=&[(-1)^{i-1} (X_i(p)-Y_i(p))]^{-1}, \label{eq:hidef}
	\end{eqnarray}
	Since each $Z_i^s$ is defined on sliding and escaping regions of $\s_i$,  it follows that $h_i$ does not vanish on these intervals since $X_i \cdot Y_i(p)<0$ if $p \in \s_i^{e,s}$. Then, we have the next proposition which proof follows directly.
	
	\begin{prop} \label{prop:pseudoeq}
		Let $Z=(X,Y) \in \Omega$. Then $p \in \s_i^{e,s}$ is a pseudo-equilibrium of $Z_i^s$, $i=1,2$, if and only if, $\det{Z(p)}=0$. In addition, $p \in \s_i^{e,s}$ is a hyperbolic pseudo-equilibrium to $Z_i^s$ provided $\displaystyle{\frac{\partial}{\partial x_j} \det{Z(p)} \neq 0}$, for $i=1,2$ and $i\neq j$.
	\end{prop}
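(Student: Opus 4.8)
The plan is to read off both claims directly from the factorization \ref{eq:slidingdetdef}, namely $Z_i^s(p)=h_i(p)\cdot\det Z(p)$ on $\s_i$, exploiting the fact already recorded just above the statement: $h_i$ does not vanish on $\s_i^{e,s}$, because $X_i\cdot Y_i(p)<0$ there forces $X_i(p)-Y_i(p)\neq 0$, so the denominator in \ref{eq:hidef} is nonzero.

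First I would settle the characterization of pseudo-equilibria. Since $h_i(p)\neq 0$ for every $p\in\s_i^{e,s}$, the product $h_i(p)\cdot\det Z(p)$ vanishes if and only if its second factor does; hence $Z_i^s(p)=0$ if and only if $\det Z(p)=0$. This gives the first assertion with no further computation.

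Second, for the hyperbolicity criterion I would begin by identifying the intrinsic derivative $(Z_i^s)'$ along the one-dimensional branch $\s_i$ with an ordinary partial derivative. Because $\s_1=\{x_1=0\}$ is parametrized by $x_2$ and $\s_2=\{x_2=0\}$ by $x_1$, differentiation along $\s_i$ is exactly $\partial/\partial x_j$ with $j\neq i$. Applying the Leibniz rule to $Z_i^s=h_i\cdot\det Z$ then yields
\[
(Z_i^s)'(p)=\frac{\partial h_i}{\partial x_j}(p)\,\det Z(p)+h_i(p)\,\frac{\partial}{\partial x_j}\det Z(p).
\]
Evaluating at a pseudo-equilibrium $p$, the first summand dies because $\det Z(p)=0$, leaving $(Z_i^s)'(p)=h_i(p)\,\partial_{x_j}\det Z(p)$. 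As $h_i(p)\neq 0$, this is nonzero precisely when $\partial_{x_j}\det Z(p)\neq 0$, which is the stated condition.

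The argument is essentially immediate once the factorization is available; the only point demanding care — and the place where a sign or index error would slip in — is the bookkeeping that identifies differentiation along $\s_i$ with $\partial/\partial x_j$ for the \emph{complementary} index $j\neq i$, together with the observation that the $\partial_{x_j}h_i$ term is annihilated exactly because one evaluates at a zero of $\det Z$. I would emphasize that no transversality of $Y$ nor any further genericity hypothesis is invoked here, so the proposition indeed follows directly from \ref{eq:slidingdetdef} and the nonvanishing of $h_i$.
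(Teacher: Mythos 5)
Your proposal is correct and follows exactly the route the paper intends: the paper gives no written proof, stating only that the proposition ``follows directly'' from the factorization $Z_i^s(p)=h_i(p)\cdot\det Z(p)$ together with the nonvanishing of $h_i$ on $\s_i^{e,s}$, and your Leibniz-rule computation for the hyperbolicity part is precisely the omitted direct argument.
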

	
	When both sliding vector fields are defined in a neighborhood of the origin, using \ref{eq:slidingdetdef} we obtain that $Z_1^s(0)=0$ if, and only if, $Z_2^s(0)=0$.
	
	The next proposition will be important when we construct the homeomorphisms in order to prove the local $\s-$equivalence between the $\s-$structural stable vector fields.

	\begin{prop} \label{prop:sliding} 
		Suppose that $p_0 \in \s_i^{s,e}$ is a regular point of the sliding vector field $Z_i^s$. Then $Z_i^s$ is locally conjugated to the constant vector field $\tilde{Z_i^s}(p)=  (-1)^{i-1} \sgn{X_i(p_0)} \cdot \sgn{\det{Z(p_0)}}$ by the identity map.
	\end{prop}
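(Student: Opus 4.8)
The plan is to reduce the claimed conjugacy to a statement about the \emph{sign} of the one-dimensional field $Z_i^s$ near $p_0$. Both $Z_i^s$ and the constant field $\tilde{Z_i^s}$ are regular (nonvanishing) vector fields on the one-dimensional manifold $\s_i$ in a neighborhood of $p_0$, and on such a manifold a nonvanishing field has a single orbit filling a small interval, oriented according to its sign. Hence the identity map will be the desired equivalence precisely when $Z_i^s$ and $\tilde{Z_i^s}$ carry the same orientation, i.e. when $\sgn{Z_i^s(p)}$ equals the constant $\tilde{Z_i^s}$ for $p$ near $p_0$. So the whole argument amounts to computing $\sgn{Z_i^s}$ near $p_0$.

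To do this I would start from the factorization $Z_i^s(p) = h_i(p)\cdot \det{Z(p)}$ in \ref{eq:slidingdetdef}, with $h_i$ given by \ref{eq:hidef}, and treat the two factors separately. Since $p_0$ is a regular point of $Z_i^s$ we have $Z_i^s(p_0)\neq 0$, so by \ref{prop:pseudoeq} we get $\det{Z(p_0)}\neq 0$; by continuity $\det{Z}$ keeps the constant sign $\sgn{\det{Z(p_0)}}$ on a neighborhood of $p_0$ in $\s_i$. For the factor $h_i$ I would use that $p_0 \in \s_i^{s,e}$ forces $X_i\cdot Y_i(p)<0$, so $X_i$ and $Y_i$ are nonzero with opposite signs there; a short case check (whether $X_i>0,\,Y_i<0$ or $X_i<0,\,Y_i>0$) shows $\sgn{(X_i - Y_i)(p)} = \sgn{X_i(p)}$, and since $X_i$ does not vanish on $\s_i^{s,e}$ its sign is locally constant, equal to $\sgn{X_i(p_0)}$. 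Therefore $\sgn{h_i(p)} = (-1)^{i-1}\sgn{X_i(p_0)}$ near $p_0$.

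Multiplying the two signs then gives $\sgn{Z_i^s(p)} = (-1)^{i-1}\sgn{X_i(p_0)}\cdot \sgn{\det{Z(p_0)}}$ on a neighborhood of $p_0$, which is exactly the constant value $\tilde{Z_i^s}$. Combined with the reduction of the first paragraph, this closes the argument.

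I expect the only real subtlety, rather than an obstacle, to be the bookkeeping of signs: being careful that $\sgn{X_i}$ is locally constant because the sliding/escaping condition $X_i \cdot Y_i<0$ excludes zeros of $X_i$, and being explicit that ``conjugacy by the identity'' here means topological equivalence of two nonvanishing one-dimensional fields, for which coincidence of orientation (sign) is both necessary and sufficient. Everything else reduces to the continuity of $\det{Z}$ and the elementary case analysis above.
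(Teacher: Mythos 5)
Your proposal is correct and follows essentially the same route as the paper: the paper's proof also reduces the conjugacy to checking that the two nonvanishing one-dimensional fields have the same sign near $p_0$, citing ``a straightforward computation'' for the identity $\sgn{Z_i^s(p)}=(-1)^{i-1}\sgn{X_i(p_0)}\cdot\sgn{\det{Z(p_0)}}$, which is exactly the computation you carry out via the factorization \ref{eq:slidingdetdef} and the observation that $X_i\cdot Y_i<0$ forces $\sgn{(X_i-Y_i)(p)}=\sgn{X_i(p)}$. Your write-up is in fact slightly more careful than the paper's (whose displayed sign formula drops the $\det$ factor that appears in the statement), but the argument is the same.
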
 
	
	\begin{proof}
		A straightforward computation gives us that around $p_0$ we have sign $\sgn Z^s_i(p ) = \linebreak (-1)^{i-1} \sgn{X_i(p_0)}$. Since $Z^s_i$ and $\tilde Z^s_i$ are one dimensional vector fields, they are conjugated by the identity map.
	\end{proof}

	%
	
	\begin{corol} \label{corol:zsdirection}
		Let $Z=(X,Y) \in \Omega.$ Suppose that both sliding vector fields are defined around the origin, then $sgn(Z_1^s(0))=sgn(Z_2^s(0))$, if $X_1(\0)\cdot X_2(\0)<0$ or  $sgn(Z_1^s(\0))=-sgn(Z_2^s(\0))$, if $X_1(\0)\cdot X_2(\0)>0$.
	\end{corol}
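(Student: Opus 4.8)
The plan is to read off everything from the factorization $Z_i^s(p)=h_i(p)\cdot\det Z(p)$ in \ref{eq:slidingdetdef}, where $h_i$ is given by \ref{eq:hidef}, so that the sign of each sliding field splits into a factor coming from $h_i$ and a common factor coming from $\det Z$. First I would record that, since both sliding vector fields are defined in a neighborhood of $\0$, the origin lies in $\overline{\s_i^{s,e}}$ for $i=1,2$; hence $X_i\cdot Y_i(\0)\le 0$, and because $X$ is transverse to $\s$ at the origin we have $X_i(\0)\neq 0$, so in fact $X_i\cdot Y_i(\0)<0$ on both branches. In particular $\sgn{X_i(\0)}$ is well defined and, as $Y_i(\0)$ has sign opposite to $X_i(\0)$, the quantity $X_i(\0)-Y_i(\0)$ is a sum of two numbers of sign $\sgn{X_i(\0)}$, whence $\sgn{X_i(\0)-Y_i(\0)}=\sgn{X_i(\0)}$.

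The main step is then sign bookkeeping. If $\det Z(\0)\neq 0$, both $Z_i^s(\0)$ are nonzero and the common factor cancels in the product:
\[ \sgn{Z_1^s(\0)}\cdot\sgn{Z_2^s(\0)}=\sgn{h_1(\0)}\cdot\sgn{h_2(\0)}, \]
since $\left(\sgn{\det Z(\0)}\right)^2=1$. From \ref{eq:hidef} one has $h_1(\0)=[X_1(\0)-Y_1(\0)]^{-1}$ and $h_2(\0)=-[X_2(\0)-Y_2(\0)]^{-1}$, so by the previous paragraph $\sgn{h_1(\0)}\cdot\sgn{h_2(\0)}=-\sgn{X_1(\0)}\cdot\sgn{X_2(\0)}=-\sgn{X_1\cdot X_2(\0)}$. (Equivalently, one may quote \autoref{prop:sliding}, which yields $\sgn{Z_i^s(\0)}=(-1)^{i-1}\sgn{X_i(\0)}\sgn{\det Z(\0)}$ directly.) Hence $\sgn{Z_1^s(\0)}\cdot\sgn{Z_2^s(\0)}=-\sgn{X_1\cdot X_2(\0)}$, and the two alternatives of the statement follow by inspection: the product equals $+1$ exactly when $X_1\cdot X_2(\0)<0$, giving $\sgn{Z_1^s(\0)}=\sgn{Z_2^s(\0)}$, and equals $-1$ exactly when $X_1\cdot X_2(\0)>0$, giving $\sgn{Z_1^s(\0)}=-\sgn{Z_2^s(\0)}$.

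There is essentially no analytic obstacle here; the only point needing care is the degenerate case $\det Z(\0)=0$, where by \autoref{prop:pseudoeq} the origin is a pseudo-equilibrium and $Z_1^s(\0)=Z_2^s(\0)=0$, so the signs must instead be read as the direction of the sliding flow at a regular point $p_0\in\s_i^{s,e}$ close to the origin. Since $X_i$ keeps a constant nonzero sign in a neighborhood of $\0$, the computation above goes through verbatim with $\0$ replaced by such a $p_0$, and the same dichotomy holds.
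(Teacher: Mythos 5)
Your proposal is correct and follows essentially the route the paper intends: the corollary is read off from the factorization $Z_i^s(p)=h_i(p)\cdot\det Z(p)$ of \ref{eq:slidingdetdef} (equivalently from the sign formula in \autoref{prop:sliding}), using that $\sgn{X_i(\0)-Y_i(\0)}=\sgn{X_i(\0)}$ on $\s_i^{e,s}$ so that $\sgn{h_1(\0)}\cdot\sgn{h_2(\0)}=-\sgn{X_1\cdot X_2(\0)}$ while the common factor $\det Z(\0)$ cancels. Your extra remarks on transversality and on the degenerate case $\det Z(\0)=0$ are consistent with the paper's standing assumptions and do not change the argument.
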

	
	\begin{prop} \label{prop:regcos}
		Given $\Z \in \Omega$ suppose that the origin belongs to $\s_i^{e,s} \cap \s_j^c$ for $i,j=1,2$ and $i\neq j$. Then the origin is a regular point of $Z_i^s$.
	\end{prop}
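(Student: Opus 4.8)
The plan is to reduce the statement to a short sign comparison among the components of $X$ and $Y$ at the origin and then to invoke \autoref{prop:pseudoeq}. Recall that a \emph{regular point} of the one-dimensional field $Z_i^s$ is simply a point where $Z_i^s$ does not vanish. Since the roles of $\s_1$ and $\s_2$ are interchangeable, it suffices to treat $i=1$, $j=2$; the case $i=2$, $j=1$ is entirely analogous. By \ref{eq:slidingdetdef}--\ref{eq:hidef} the factor $h_1$ does not vanish on $\s_1^{e,s}$, so the assertion ``$\0$ is a regular point of $Z_1^s$'', i.e.\ $Z_1^s(\0)\neq 0$, is equivalent to $\det{Z(\0)}\neq 0$. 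Thus the entire task is to prove $\det{Z(\0)}=(X_1 Y_2 - X_2 Y_1)(\0)\neq 0$, after which \autoref{prop:pseudoeq} closes the argument.

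First I would translate the two hypotheses into sign conditions at the origin. From $\0 \in \s_1^{e,s}$ together with the definitions of $\s_1^s$ and $\s_1^e$, the normal components of $X$ and $Y$ to $\s_1$ point in opposite senses on each side of the origin, so $X_1(\0)$ and $Y_1(\0)$ are nonzero and $X_1(\0)\cdot Y_1(\0)<0$. Likewise, from $\0 \in \s_2^c$ and the definition of $\s_2^c$, the normal components to $\s_2$ have equal signs, so $X_2(\0)$ and $Y_2(\0)$ are nonzero and $X_2(\0)\cdot Y_2(\0)>0$.

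Next I would compare signs. Writing $a=X_1(\0)$, $b=X_2(\0)$, $c=Y_1(\0)$, $d=Y_2(\0)$, the conditions read $ac<0$ and $bd>0$, whence $\sgn{c}=-\sgn{a}$ and $\sgn{d}=\sgn{b}$. Therefore
\begin{equation*}
\sgn{ad}=\sgn{a}\sgn{b}=-\sgn{b}\sgn{c}=-\sgn{bc},
\end{equation*}
so the products $ad=X_1 Y_2(\0)$ and $bc=X_2 Y_1(\0)$ carry opposite signs and are both nonzero. Consequently $\det{Z(\0)}=ad-bc$ satisfies $|\det{Z(\0)}|=|ad|+|bc|>0$, and in particular $\det{Z(\0)}\neq 0$. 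By \autoref{prop:pseudoeq} the origin is not a pseudo-equilibrium of $Z_1^s$, i.e.\ $Z_1^s(\0)\neq 0$, as desired.

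There is no genuinely hard step here: the proof is a brief sign chase. The only point deserving care is the first one, namely correctly extracting the sign conditions from the region definitions, keeping track of which of $X$, $Y$ occupies the half-plane $\{x_1>0\}$ along each half-axis $\s_1^{\pm}$ (the occupant switches from $X$ on $\s_1^+$ to $Y$ on $\s_1^-$), while noting that both the sliding and escaping prescriptions nevertheless yield the \emph{same} product condition $X_1 Y_1(\0)<0$. Once these signs are in hand, the strict inequality $\det{Z(\0)}\neq 0$ is immediate and \autoref{prop:pseudoeq} finishes the proof.
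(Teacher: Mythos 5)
Your proof is correct and follows essentially the same route as the paper's: both reduce the claim to $\det Z(\0)\neq 0$ via the factorization $Z_i^s=h_i\cdot\det Z$, extract the sign conditions $X_i Y_i(\0)<0$ and $X_j Y_j(\0)>0$ from the hypotheses, and observe that the two terms of the determinant then have opposite signs so cannot cancel. The only difference is presentational: you spell out the sign chase with explicit letters, while the paper states the same comparison $\sgn{X_i(\0)Y_j(\0)}=\sgn{-Y_i(\0)X_j(\0)}$ in one line.
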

	
	\begin{proof}
		Since we have $Z_i^s(0)=0$ if and only if, $\det{Z(p)}=0$, it is enough to prove that  $\det{Z(p)} \neq 0$. As $0 \in \s^{e,s}_i$, we have $X_i \cdot Y_i (\0)<0$ and $0 \in \s_j^c$ therefore $X_j \cdot Y_j (\0)>0$. Consequently, $sgn(X_i(\0).Y_j(\0))=sgn(-Y_i(\0).X_j(\0))$ and then $\det{Z(\0)}\neq 0.$
	\end{proof}
	
	From now on, our goal is to classify the local $\s-$structural stable behavior in $\Omega$. We give a normal form for each equivalence class and construct the respective local $\s-$equivalences between an arbitrary vector field and its corresponding normal form.
	
	Observe that being $X$ and $Y$ transverse to $\s$ at the origin, we have the following configurations for $\s$:
	
	\begin{enumerate} \renewcommand{\theenumi}{C\arabic{enumi}}
		\item $X_i \cdot Y_i(\0)>0$, for $i=1,2$ and then $\s=\s^c$;  \label{itm:C1}
		\item $X_i \cdot Y_i(\0)<0$, for $i=1,2$ and we have $\s=\s^s \cup \s^e$; \label{itm:C2}
		\item $X_i \cdot Y_i(\0)>0$ and $X_j \cdot Y_j(\0)<0$, then $\s_i=\s^c_i$ and $\s_j=\s^s_j \cup \s^e_j$ for $i,j=1,2$ and $i\neq j$; \label{itm:C3}
	\end{enumerate}
	
	Considering the continuous maps
	\begin{equation}  \label{eq:xii}
		\begin{array}{cccc}
			\xi_i: & \Omega & \rightarrow  	& \R \\
			& Z 			& \mapsto 		& X_i \cdot Y_i(\0)
		\end{array}
	\end{equation} \noindent it follows that conditions stated in items \ref{itm:C1} to \ref{itm:C3} are open. Then for each $Z \in \Omega$ satisfying conditions \ref{itm:C1} to \ref{itm:C3} there exists a neighborhood $\mathcal{V}_Z \subset \Omega$ such that $\sgn{\xi_i|_{\mathcal{V}_Z}}$ is constant. Moreover, conditions \ref{itm:C1} to \ref{itm:C3}  define a generic set, since its complement in $\Omega$ is $\xi_1^{-1}(0) \cup \xi_2^{-1}(0)$.  Nevertheless, even if these conditions are open, we will see that vector fields satisfying some of them are not structurally stable. We will analyze each case separately.

	\begin{definition}
	Let $\Omega_0^1 \subset  \Omega$ be the subset of all $Z \in \Omega$ satisfying condition \ref{itm:C1} and therefore, condition $A$ on \autoref{theo:A}. 
	\end{definition}
	
	The \autoref{prop:C1} gives the local $\s-$equivalence between any $Z \in \Omega_0^1$ and the corresponding normal form $\tilde{Z}$.
	
	Let $Z \in \Omega_0^1$ and $\mathcal{V}_Z \subset \Omega$ be a neighborhood of $Z$ such that $\sgn{\xi_i|_{\mathcal{V}_Z}}>0, \, i=1,2$. 
	By \autoref{prop:C1}, each $Z' \in \mathcal{V}_Z$ is locally $\s-$equivalent to $\tilde{Z}$. By transitivity it follows that $Z'$ is $\s-$equivalent to $Z$. Then every $Z \in \Omega_0^1$ is locally $\s-$structurally stable.

	\begin{prop} \label{prop:C1}
		Suppose that $Z \in \Omega_0^1$. Then $Z$ is locally $\s-$equivalent to the piecewise smooth system 
		
		\begin{equation} \label{eq:nfcrossing}
			\tilde{Z}(p)=\begin{cases}
				\tilde{X}(p)=(a,b), & \mbox{ if }\, p \in \U^+ \\
				\tilde{Y}(p)=(a,b), & \mbox{ if }\, p \in \U^-
			\end{cases}, 
		\end{equation} \noindent where $a=sgn(X_1(\0))$ and $b=sgn(X_2(\0))$.
		
		In other words, $Z$ is $\mathcal{C}^0-$equivalent to the continuous vector field $\tilde{Z}(p)=(a,b)$.
	\end{prop}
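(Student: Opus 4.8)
The plan is to read off the geometry forced by condition C1 and then produce the equivalence by rectifying the flow with a section adapted to $\s$.

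First I would record the geometric picture. Since the map $\xi_i$ of \ref{eq:xii} is continuous and $\xi_i(Z)=X_i\cdot Y_i(\0)>0$, there is a neighborhood of the origin on which $X_i\cdot Y_i>0$ along $\s_i$; hence $\s_i\setminus\{\0\}\subset\s_i^c$ for $i=1,2$, there are no sliding or escaping intervals, and $\s=\overline{\s^c}$. By the trajectory definitions the origin lies in $\overline{\s^c}$ and carries a single regular orbit (an orbit of $X$ when $X_1(\0)\cdot X_2(\0)>0$, of $Y$ otherwise). Thus the local phase portrait of $Z$ is a regular foliation whose leaves cross both coordinate axes transversally. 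The normal form $\tilde Z\equiv(a,b)$ with $a=\sgn{X_1(\0)}$ and $b=\sgn{X_2(\0)}$ admits the identical description: its leaves are the parallel lines $t\mapsto p+t(a,b)$, transverse to both axes because $a,b\neq0$, and $\tilde X_i\cdot\tilde Y_i=1>0$ places $\tilde Z$ in class C1 as well.

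Next I would build the local $\s$-equivalence. The key remark is that $W=a x_1+b x_2$ is strictly increasing along every orbit of $Z$: indeed $\dot W=a X_1+b X_2=|X_1(\0)|+|X_2(\0)|+o(1)>0$ in $\U^+$, and similarly in $\U^-$ with $Y$. Hence $\Gamma=\{W=0\}$ is a section crossed exactly once by each orbit in a small enough neighborhood, and $\Gamma\cap\s=\{\0\}$ since $\{a x_1+b x_2=0\}$ meets $\{x_1 x_2=0\}$ only at the origin; the same holds verbatim for $\tilde Z$. I would therefore fix an orientation-preserving homeomorphism $h|_\Gamma$ onto the corresponding section of $\tilde Z$ sending $\0$ to $\0$, and extend $h$ along the orbits. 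Rather than transporting by time (which would not respect $\s$), I would reparametrize each orbit so that its successive crossings of $\s_1$ and $\s_2$ are sent to those of the image leaf of $\tilde Z$; this is possible because every orbit off the separating leaf through $\0$ meets the two axes in the same combinatorial order as the corresponding leaf of $\tilde Z$, each axis being itself a cross-section to the flow. The resulting $h$ carries $Z$-orbits to $\tilde Z$-orbits, satisfies $h(\s)=\s$, and is orientation preserving because $X$ and $Y$ point into the same quadrant $(a,b)$.

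The main obstacle is continuity of $h$, and the homeomorphism property, at the origin and across the separating leaf through it, where the combinatorial crossing pattern switches and the crossing times of $\s$ collapse as $p\to\0$. Away from $\0$ this is the standard transverse gluing of flow boxes, so I would concentrate on showing that the reparametrization extends continuously to the separating leaf and that $h(p)\to\0$ as $p\to\0$, squeezing the crossing times by means of the monotonicity of $W$. Once $h$ is verified to be a local homeomorphism, $Z$ is locally $\s$-equivalent to $\tilde Z$; combined with the transitivity argument preceding the statement, this also yields that every $Z\in\Omega_0^1$ is locally $\s$-structurally stable.
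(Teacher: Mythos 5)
Your construction is, in outline, the same as the paper's: both proofs rectify the flow by matching the crossing times of each orbit with $\s_1$ and $\s_2$ and glue along the distinguished orbit through the origin. The paper merely makes different bookkeeping choices: it uses the $X$-orbit through $\0$ (rather than the line $\{ax_1+bx_2=0\}$) as the separating curve in $\U^+$, normalizes $h$ to be the identity on $\s_1$, and writes the time reparametrization explicitly as $\sigma(p)=-\,q_1(p)\,t_1(p)/(t_2(p)-t_1(p))$ in $\overline{\U^-}$, together with an explicit formula for the inverse homeomorphism. Your preliminary observations (absence of sliding, transversality of the foliation to both axes, $\tilde Z$ lying in the same class) and the monotone function $W=ax_1+bx_2$ are all correct and consistent with this.

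The one place where your text is a plan rather than a proof is exactly the step you defer, and it deserves to be called out because it is the only step that needs an argument. The separating leaf is the $X$-orbit through $\0$, which locally stays in $\overline{\U^+}$; it meets your section $\Gamma=\{ax_1+bx_2=0\}$ only at $\0$ and meets $\s$ only at $\0$. Hence it carries no marked points whatsoever to anchor your ``match the successive crossings'' reparametrization: $h$ must be defined on that orbit by some other rule (e.g.\ transporting the time parameter from $\0$, which is what the paper does, since on that curve both of its branches reduce to $\varphi_{\tilde X}(\0;-t(p))$), and one must then verify that the crossing-matching definitions on either side converge to this rule as the three marked points of a nearby orbit coalesce at $\0$. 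The monotonicity of $W$ does give you $h(p)\to\0$ as $p\to\0$, but it does not by itself produce the rule on the separating orbit nor the two-sided matching along it; the paper discharges this by exhibiting the interpolation formula above, whose limit as $t_1,t_2,q_1\to 0$ is transparent. So: sound approach, essentially the paper's, but incomplete until you write down $h$ on the separating orbit and prove the continuity you have flagged.
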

	
	\begin{proof} 
		We will present the construction for the case $X_1(\0), X_2 (\0)>0$. The other cases can be done analogously. In this case, the vector field $\tilde{Z}$ has the form \begin{equation}
			\tilde{Z}(x,y) = \left( \begin{array}{c} 1 \\ 1
			\end{array}
			\right) 
		\end{equation}

		Since $X$ and $Y$ are both transverse to $\s$ at the origin, there is a neighborhood $\U$ of the origin such that for each initial condition $p \in \U$ the trajectory of $Z$ through $p$ reaches the discontinuity $\s$ in finite time. 
		
		We are going to construct the homeomorphism piecewisely.
		
		\begin{figure}[!htbp]
			\centering
			\begin{tiny}
				\def\svgscale{0.35}
				\input{./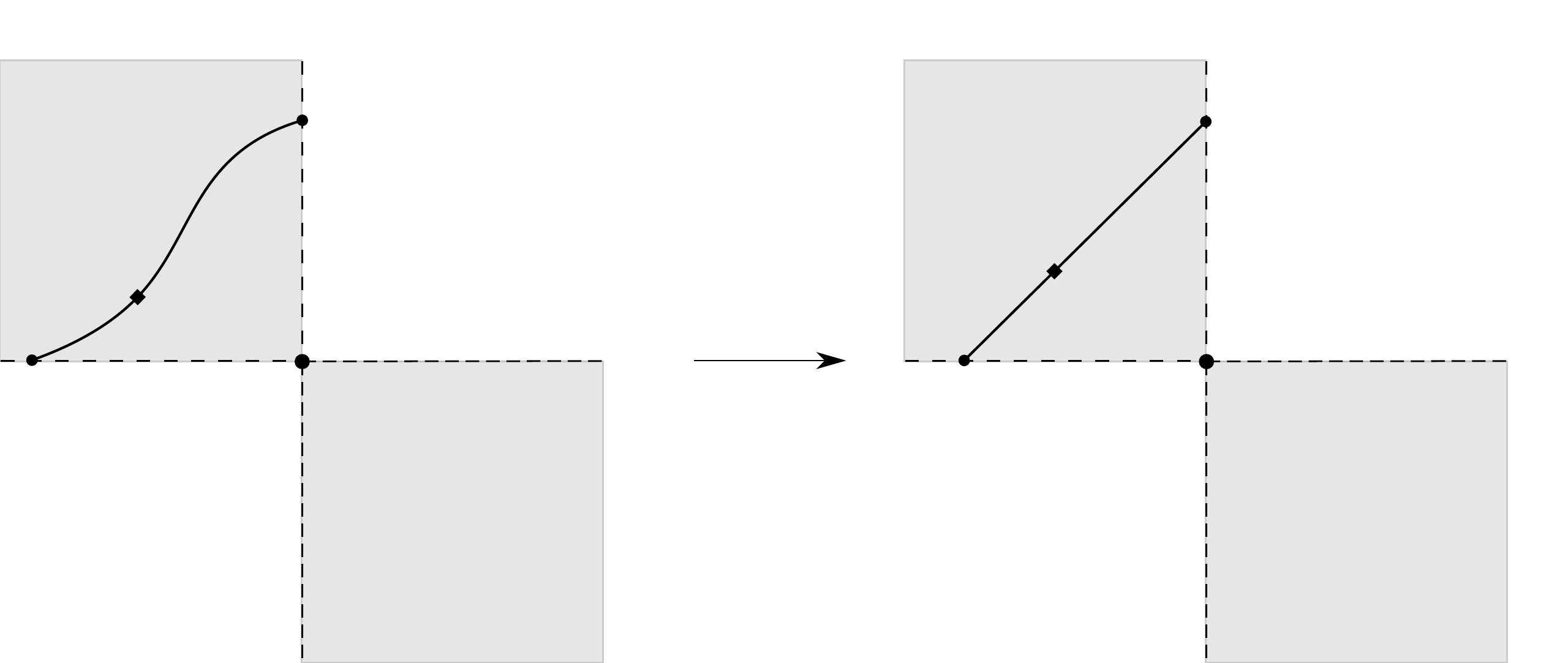_tex}
			\end{tiny}
			\caption{The map $h_-$ in $\U^-$ for $X_1 \cdot X_2 (\0) >0$.}
			\label{fig:homC11}
		\end{figure}
		
		\begin{figure}[!htbp]
			\centering
			\begin{tiny}
				\def\svgscale{0.35}
				\input{./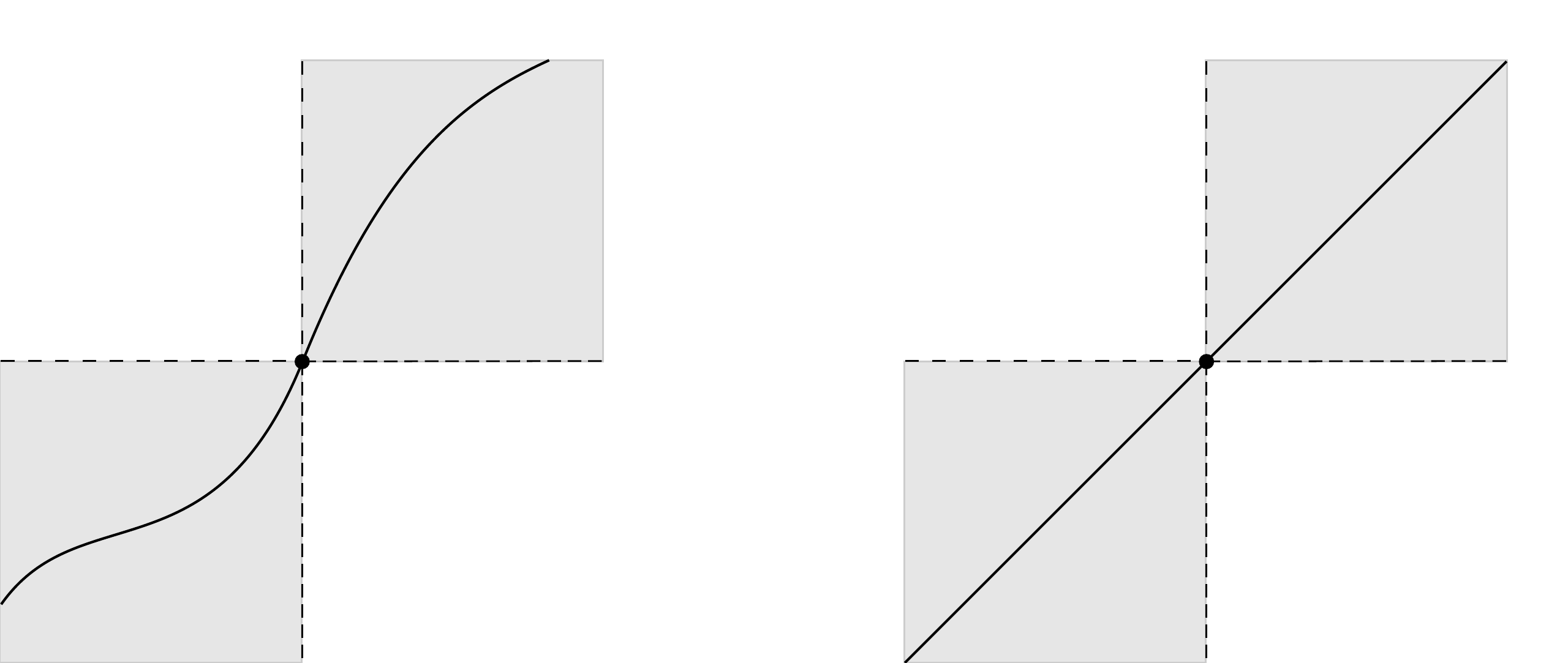_tex}
			\end{tiny}
			\caption{ The regions $R^i$ for $i=1,2$ of $\U$ and the respective cross sections $\Gamma$ and $\tilde{\Gamma}$ for $Z$ and $\tilde{Z}$ when $X_1 \cdot X_2 (\0)>0$.}
			\label{fig:homC12}
		\end{figure}
		
		Let $p \in \overline{\U^-}$. As $Y_1\cdot Y_2(\0)>0$, $Z$ is transient in $\U^-$. Then there exists a unique time $t_i(p) \in \R$ such that $Q_i(p)=\varphi_Y(p;t_i(p)) \in \s_i$ for $i=1,2$. 
		
		On the other hand, given $Q_1(p)=(0,q_1(p)) \in \s_1$, then $\tilde{Q}_2(p)=(-q_1(p),0)=\varphi_{\tilde{Y}}(Q_1(p);-q_1(p))$ belongs to $\s_2$.

		As illustrated in \autoref{fig:homC11}, we define the homeomorphism $h^-$ on $\overline{\U^-}$ as 
		\begin{equation*}
			h^-(p) = \varphi_{\tilde{Y}}\left( Q_1(p), \sigma(p) \right) 
		\end{equation*} where $\sigma(p)=- \displaystyle{\frac{q_1 \cdot t_1}{t_2 - t_1} (p)}$. \newline
		
		Observe that $h^-|_{\s_1} = Id$, $h^-(p) = (-q_1(p),0)$ if $p \in \s_2$ and $h^-(\0)= \0$. Moreover, the map $h^-$ is an homeomorphism in $\U^-$.
		
		Consider now the cross section $\Gamma=\{ \varphi_X(0,t), \, t \in \R \} \cap \U$ and define the regions \begin{align*}
			R_1= \{ p \in \U^+_+ : p \mbox{ is above } \Gamma \} \cup \{ p \in \U^+_- : p \mbox{ is below } \Gamma\} \cup \Gamma, \\
			R_2= \{ p \in \U^+_+ : p \mbox{ is below } \Gamma \} \cup \{ p \in \U^+_- : p \mbox{ is above } \Gamma\} \cup \Gamma.
		\end{align*}

		Since $\s=\s^c$ for each $p \in \overline{R_i}$ there exists a unique $t_i(p) \in \R$ such that $Q_i(p)=\varphi_X(p,t(p)) \in \s_i$, $i=1,2$.

		Define on each region $\overline{R_i}$ the homeomorphisms \begin{eqnarray*}
			h_1^+(p)&=& \varphi_{\tilde{X}}(Q_1(p);-t_i(p)), \; p \in R_1, \\
			h_2^+(p)&=& \varphi_{\tilde{X}}(h_-(Q_2(p));-t_i(p)) , \; p \in R_2.
		\end{eqnarray*}
		
		Observe that if $p \in \Gamma$ then $h_1^+ (p) = \varphi_{\tilde{X}}(0,-t(p)) = h_2^+(p)$. Therefore, the map $h^+$, defined as $h^+(p)=  h^+_1(p)$, if $p \in R_1$ and $h^+(p)=  h^+_2(p)$, if $p \in R_2$ is a homeomorphism in $\U^+$.
		
		Moreover, the maps $h^+$ and $h^-$ agree on $\s$. If $p \in \s_i,$ then $Q_i(p)=p$ and $t_i(p)=0$, thus $h^-(p)=h^+_{1,2}$. Therefore the map $h$ defined as follow is an homeomorphism. \begin{equation*}
			h(p)= \begin{cases} 
				h^-(p)=\varphi_{\tilde{Y}}( \varphi_Y(p,t_1(p)),\sigma(p)), & \, p \in \overline{\U^-}, \\
				h^+_1(p)=\varphi_{\tilde{X}}(\varphi_X(p,t_1(p)),-t_2(p)), & \, p \in \overline{R_1}, \\
				h^+_2(p)=\varphi_{\tilde{X}}\left(\varphi_{\tilde{Y}}\left(\varphi_Y\left(\varphi_X(p;t_2(p));t_1 \left(\varphi_X(p;t_2(p)) \right)\right),\sigma(\varphi_X(p;t_2(p)))\right);-t_2(p)\right), & \, p \in \overline{R_2} \end{cases} .
		\end{equation*}
		
		\begin{figure}[!htbp]
			\centering
			\begin{tiny}
				\def\svgscale{0.4}
				\input{./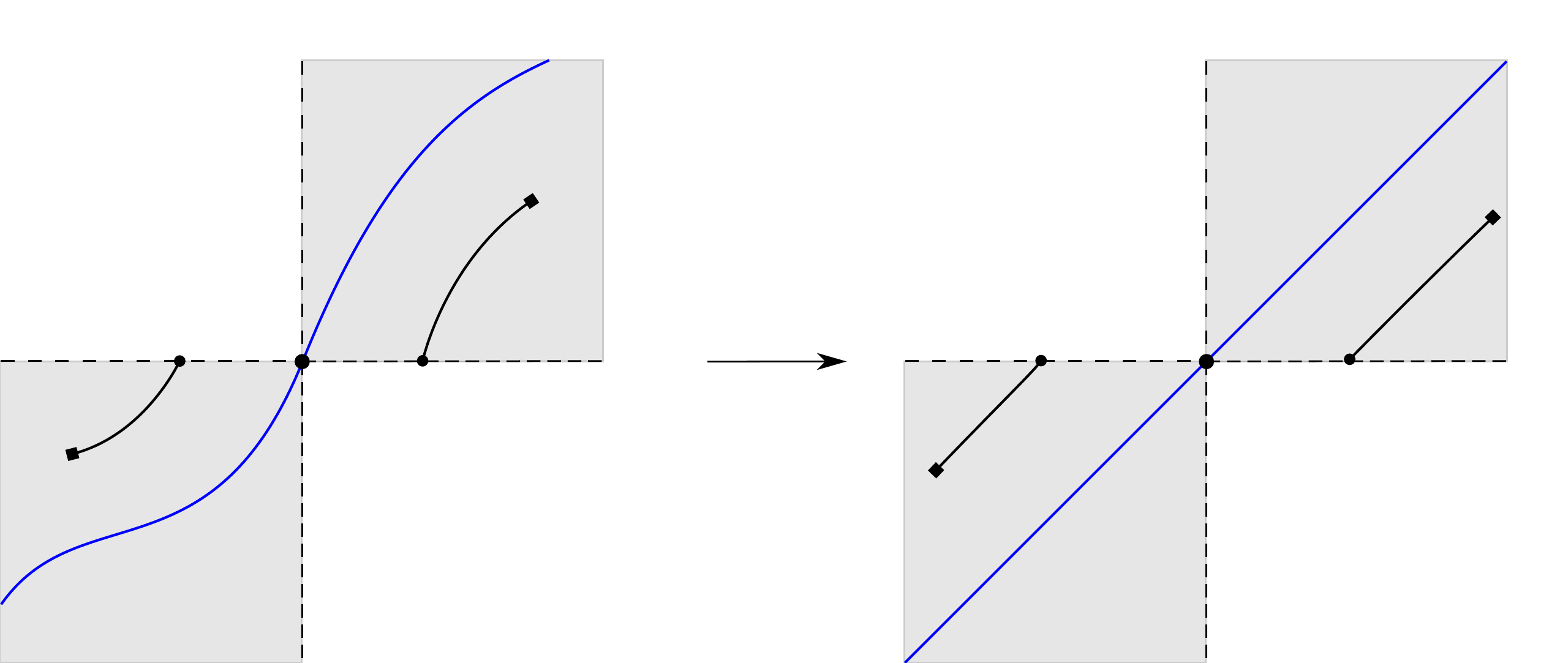_tex}
			\end{tiny}
			\caption{The homeomorphism $h_i$ define on regions $R^i$ for $X_1 \cdot X_2 (\0)>0 $}
			\label{fig:homC13}
		\end{figure}
		
		Regarded the way that $h$ has been constructed it is clear that $h$ carries the trajectories of $Z$ to trajectories of $\tilde{Z}$ preserving the orientation. Moreover, if $p =(p_1,p_2) \in \U$, a straightforward calculation shows that the map 
		
		\begin{equation*}
			g(p)= \begin{cases} 
				\varphi_{Y}((0,p_2-p_1));\tau(p)), & \, p \in \overline{\tilde{\U}^-}, \\
				\varphi_{X}((0,p_2-p_1);p_1), & \, p \in \overline{\tilde{R}_1}, \\
				\varphi_X(\varphi_{Y}((0,p_2-p_1);-t_2(p_2-p_1);p_2) & \, p \in \overline{\tilde{R}_2},\\
			\end{cases}
		\end{equation*} \noindent where $\displaystyle \tau (p)=\frac{p_1 \cdot t_2(0,p_2-p_1)}{p_1-p_2}$ is the inverse of $h$. Thus $h$ is a homeomorphism.
		
		The case $X_1(\0),X_2(\0)<0$ is analogous. The only difference is that we must consider the  cross section $\Gamma=\{ \varphi_{Y}(t,0) \} : t \in \R \} \cap \U$, define the regions $R_i$ and then proceed in the same way as in the case $X_1(\0), X_2 (\0)>0$.
	\end{proof}

	Opposed to the case \ref{itm:C1}, if $Z \in \Omega$ satisfies \ref{itm:C2} one can not automatically conclude that $Z$ is locally $\s-$structurally stable even if \ref{itm:C2} is an open condition. This occurs because one can have that $Z$ satisfies \ref{itm:C2} and $\det Z(\0)=0$ and then the origin is a pseudo-equilibrium for both $Z_i$. Lets take for instance \begin{equation}
		Z_{\alpha}(x,y)= \begin{cases}
			X_{\alpha}(x,y)&=(1-\alpha +x,1) \\
			Y(x,y)&= (-1+y,-1)
		\end{cases}.
	\end{equation}

	In this case, the sliding vector field is defined in $\s_1$ and $\s_2$. However, for $\alpha=0$ the vector field $Z_0$ has just one pseudo-equilibrium at the origin, while when $\alpha \neq 0$ the vector field $Z_\alpha$ has two pseudo-equilibria in $\s_1$ and $\s_2$  near the origin. Therefore, one can not establish a local $\s-$equivalence between $Z_0$ and $Z_\alpha$ for $\alpha \neq 0.$

	Since $\det: Z \in \Omega \mapsto \det{Z(\0)} \in \R$ is a continuous function, once $Z$ satisfies $\det Z(\0) \neq 0$, we obtain a neighborhood $\mathcal{W}_Z \subset \mathcal{V}_Z$ such that $\sgn{\det|_{\mathcal{W}_Z}}$ is constant. Then $Z' \in \mathcal{W}_Z$ satisfies condition \ref{itm:C2} and $\det{Z'(\0)} \neq 0$.
	
	\begin{definition} Let $\Omega_0^2$ be the set of all $Z \in \Omega$ satisfying \ref{itm:C2} and $\det Z(\0) \neq 0$ and therefore condition $B$ of \autoref{theo:A}.
	\end{definition}	
		
Then from the argument exposed above and the next proposition we conclude that $Z \in \Omega_0^2$ is local $\s-$structurally stable. 
	
	\begin{rem} Even if the formula stated for the normal form of $\tilde{Z} \in \Omega^2_0$ in system \ref{sys:C2nf} is cumbersome, it is a good way to write all the normal forms in a concise way. Substituting the values for $a,b$ and $c$ indicated in \autoref{prop:C2} the expression of $\tilde{Z}$ becomes very simple.   
	\end{rem}

	\begin{prop} \label{prop:C2}
		
		Let $Z \in \Omega_0^2$. Then $Z$ is locally $\s-$equivalent to the piecewise smooth system 
		\begin{equation} \label{sys:C2nf}
			\tilde{Z}(p)=
			\begin{cases}
				((\delta_{-1(ab)}\delta_{1c}+1) \cdot a,-(\delta_{-1(ab)}\delta_{-1c}-ab)\cdot a), & \mbox{ if } p \in \U^+ \\
				(-(\delta_{1(ab)}\delta_{1c} + 1) \cdot a ,-(\delta_{1(ab)}\delta_{-1c} +ab)  \cdot a ), & \mbox{ if } p \in \U^-
			\end{cases}, 
		\end{equation} where $a=sgn (X_1(\0))$, $b=sgn (X_2 (\0))$, $c=sgn(\det Z(\0))$ and $\delta_{rs}$ is the Kronecker function.
	\end{prop}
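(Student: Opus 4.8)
The plan is to mirror the construction used in \autoref{prop:C1}, but adapted to the fact that in case \ref{itm:C2} the whole switching set $\s$ (minus the origin) consists of sliding and escaping arcs rather than crossing arcs. First I would record the local data that an $\s$-equivalence must preserve. Since $\det Z(\0)\neq 0$, \autoref{prop:pseudoeq} guarantees that neither $Z_1^s$ nor $Z_2^s$ vanishes in a punctured neighbourhood of the origin, so both sliding fields are regular there and the origin is an isolated singularity of the cross-type described above. Writing $a=\sgn{X_1(\0)}$ and $b=\sgn{X_2(\0)}$, condition \ref{itm:C2} forces $\sgn{Y_1(\0)}=-a$ and $\sgn{Y_2(\0)}=-b$; from these signs one reads off directly which of $\s_1^{\pm},\s_2^{\pm}$ is sliding and which is escaping (e.g. $\s_1^{+}$ is escaping iff $a=1$). \autoref{prop:sliding} then fixes the directions of the one-dimensional sliding motion: $Z_1^s$ has sign $ac$ along $\s_1$ and $Z_2^s$ has sign $-bc$ along $\s_2$, where $c=\sgn{\det Z(\0)}$. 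By \autoref{corol:zsdirection} these two directions are either both oriented toward or both away from the origin, or of opposite type, according to the sign of $ab$, which is exactly why the normal form \ref{sys:C2nf} depends on $ab$ and on $c$ separately.

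The second step is to verify that the candidate $\tilde Z$ realizes precisely this data. Unwinding the Kronecker deltas in \ref{sys:C2nf} gives, in each of the four cases determined by $ab=\pm1$ and $c=\pm1$, an explicit pair of constant vectors $\tilde X,\tilde Y$; a direct computation shows that the components of $\tilde X$ have signs $a$ and $b$, those of $\tilde Y$ have signs $-a$ and $-b$, and $\sgn{\det \tilde Z(\0)}=c$. Hence $\tilde Z$ satisfies \ref{itm:C2}, shares the same sliding/escaping decomposition of $\s$, and has sliding fields with the same signs $ac$ and $-bc$. Because the orientation-preserving involution $(x_1,x_2)\mapsto(-x_1,-x_2)$ preserves $\U^{\pm}$ and $\s$, fixes $c$, and reverses both $a$ and $b$, it suffices to construct the equivalence in the four representative cases with $a=+1$; the remaining cases follow by conjugating with this involution.

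Finally I would build the homeomorphism $h$ piecewise, in the spirit of \autoref{prop:C1}. On each sliding or escaping arc of $\s$ the identity conjugates $Z_i^s$ to $\tilde Z_i^s$ by \autoref{prop:sliding}, since the directions agree; set $h(\0)=\0$. In each closed quadrant $\U_{\pm}^{\pm}$ the relevant field ($X$ in $\U^+$, $Y$ in $\U^-$) is transverse to both bounding half-axes and nowhere zero, so its orbit through the origin divides the quadrant into two sub-regions on each of which every orbit meets exactly one of the two bounding arcs of $\s$. Defining $h$ on each sub-region by carrying an orbit of $Z$ to the orbit of $\tilde Z$ with the matched entry/exit point on $\s$, composed with the corresponding time reparametrization, reproduces the region-wise construction of the maps $R_1,R_2$ in \autoref{prop:C1}; gluing the sub-region maps along the separatrix and the quadrant maps along the shared half-axes yields a global orientation-preserving homeomorphism carrying trajectories of $Z$ to trajectories of $\tilde Z$, with inverse obtained as in \autoref{prop:C1} by composing the inverse flows. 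I expect the main obstacle to be continuity and bijectivity of $h$ at the origin: it is the common accumulation point of sliding orbits from several arcs and of regular orbits from all four quadrants, and in the saddle-type configuration the orbits approaching and leaving the corner must be matched without tearing. Controlling the contact-time functions uniformly as $p\to\0$ and checking that each quadrant map extends continuously to the corner with value $\0$ is the delicate point; here the regularity of the sliding fields, guaranteed by $\det Z(\0)\neq0$, is precisely what rules out the degeneracies that would otherwise obstruct the gluing.
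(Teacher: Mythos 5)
Your proposal is correct and follows essentially the same route as the paper: verify that $\tilde Z$ reproduces the sign data $a$, $b$, $c$ (hence the same sliding/escaping decomposition and, via \autoref{prop:sliding}, sliding fields conjugate to $Z_i^s$ by the identity), and then repeat the region-wise construction of \autoref{prop:C1}. Your sign computations ($\sgn{\tilde X}=(a,b)$, $\sgn{\tilde Y}=(-a,-b)$, $\sgn{\det\tilde Z(\0)}=c$, and $\sgn{Z_1^s}=ac$, $\sgn{Z_2^s}=-bc$) check out, and the extra detail you supply on gluing at the origin, together with the reduction by the involution $(x_1,x_2)\mapsto(-x_1,-x_2)$, only elaborates what the paper leaves implicit.
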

	
	\begin{proof}
		Observe that $\tilde Z$ also satisfies condition \ref{itm:C2}. Moreover, substituting the value of $ab$ in the formula \ref{sys:C2nf}, we obtain $\det{\tilde Z}(\0)=\delta_{1c} - \delta_{-1c}$. Thus, $\sgn {\det{\tilde Z}(\0)}= \sgn{\det{ Z}(\0)} \neq 0.$ 
		
		As $Z_i^s$ and $\tilde{Z}_i^s$ are one dimensional vector fields with the same sign, the identity is an equivalence between them. Therefore, the same construction of \autoref{prop:C1} can be applied in this case.

	\end{proof}
	
	The last case to be studied is when $Z$ belongs to $\Omega^3_0$, that is, the set of all $Z \in \Omega$ satisfying \ref{itm:C3}. Observe that in the previous cases there were no meaningful differences on the dynamics of $Z$ depending of $\sgn{X_1 \cdot X_2(\0)}$, since one case is just the reflection of the other. This is not true when we are considering $Z \in \Omega_0^3$.
	
	For this reason, we define,
	
	\begin{definition} 
		Let $\Omega_0^{3}$ the set of all $Z \in \Omega$ satisfying condition $C3$ and  
		$$\Omega_0^{3,1} = \{ Z \in \Omega_0^3 : \, X_1 \cdot X_2 (\0) >0 \}.$$
		\end{definition}
		
		A standard argument and \autoref{prop:C31} show that $Z \in \Omega_0^{3,1}$ is always local $\s-$structurally stable. Observe that this is not the case when $Z \in \Omega_0^3 \setminus \Omega_0^{3,1}$. In fact, if $X_1 \cdot X_2 (\0)<0$, the vector field $X$ is transient. Moreover, condition \ref{itm:C3} gives straightforwardly that the vector field $Y$ is also transient. Consequently, since $Z$ is transient, one needs to analyze the first return map defined in \ref{eq:poincare} in order to avoid a non hyperbolic fixed point of $\phi_Z$ at the origin. This situation would lead to a higher codimension bifurcation.
	
	\begin{prop}\label{prop:C31}
		Suppose $Z \in \Omega^{3,1}_0$. Then $Z$ is locally $\s-$equivalent around at the origin to $$\tilde{Z}(p)=\begin{cases}
		\tilde{X}(p)=(a,a), & \mbox{ if } p \in \U^+ \\
		\tilde{Y}(p)=(b,-b), & \mbox{ if } p \in \U^-
		\end{cases},$$ where $a=sgn(X_1(\0))$ and $b=sgn(Y_1 (\0))$.
	\end{prop}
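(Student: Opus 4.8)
The plan is to show that the explicit model $\tilde{Z}$ lies in the same stratum as $Z$ and realizes the same combinatorial configuration on $\s$, and then to transport orbits by a flow-box homeomorphism built exactly as in the proof of \autoref{prop:C1}, the only genuinely new ingredient being the sliding segment, which is matched by the identity. First I would pin down the configuration. Writing $\sigma_k = \sgn{X_k(\0)}$ and $\tau_k = \sgn{Y_k(\0)}$, the hypothesis $X_1 \cdot X_2(\0)>0$ gives $\sigma_1=\sigma_2=a$, while condition \ref{itm:C3} (one of the products $X_k Y_k(\0)$ positive, the other negative) combined with $\sigma_1=\sigma_2$ forces $\tau_1 \tau_2 <0$, i.e. $\tau_1=b$ and $\tau_2=-b$. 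Hence $\tilde{X}=(a,a)$ and $\tilde{Y}=(b,-b)$ agree componentwise in sign with $X(\0)$ and $Y(\0)$; in particular $\tilde{Z}$ again satisfies \ref{itm:C3}, with the same line of $\s$ crossing and the same line sliding-escaping.

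Next I would match the sliding dynamics. A direct computation gives $\det \tilde{Z}(\0)=-2ab$, while $\det Z(\0)=-ab\,(|X_1|\,|Y_2|+|X_2|\,|Y_1|)(\0)$, so $\sgn{\det Z(\0)}=\sgn{\det \tilde{Z}(\0)}=-ab$. By \autoref{prop:sliding} the sliding fields of $Z$ and $\tilde{Z}$ on the sliding-escaping line share the same sign (the factor $\sgn{X_j(p_0)}$ is $a$ for both), and by \autoref{prop:regcos} the origin is a regular point of each; thus they are conjugate by the identity, which in particular matches the sliding orbit through $\0$.

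I would then observe that, since $X_1 X_2(\0)>0$, the field $X$ is non-transient in $\U^+$, and the sign analysis above ($Y_1 Y_2(\0)<0$) makes $Y$ non-transient in $\U^-$ as well. Consequently no first-return map intervenes: each orbit in $\U^\pm$ meets $\s$ in exactly one of the two lines and leaves $\U$ through its outer boundary in the opposite time direction, precisely as in the purely crossing situation of \autoref{prop:C1}. I would build $h$ piecewise. On the sliding-escaping line $h$ is the identity, by the step above. In $\U^+$ I split the region along the separatrix $\Gamma=\{\varphi_X(t;\0)\}$ into the subregion whose backward $X$-orbits meet the crossing line and the subregion whose backward orbits meet the escaping line, and on each I send a point to the model by following $\varphi_X$ to $\s$ and then the constant flow $\varphi_{\tilde{X}}$, mimicking the maps $h^+_1,h^+_2$ of \autoref{prop:C1}; symmetrically in $\U^-$ using $\varphi_Y$ and $\varphi_{\tilde{Y}}$. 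By construction $h$ carries orbits of $Z$ to orbits of $\tilde{Z}$ preserving orientation, and a flow-inverse formula of the same type as in \autoref{prop:C1} provides a continuous inverse.

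The hard part will be gluing continuity along $\s$, and above all at the origin. Unlike the all-crossing case, the separatrix $\Gamma$ here terminates at a switching point that is simultaneously an endpoint of the crossing region on one line and of the sliding-escaping region on the other, and whose $Z$-trajectory is the sliding orbit through $\0$. I must verify that the three pieces of $h$ (the two flow-box pieces in $\U^\pm$ and the identity on the sliding segment) agree on $\s_1\cup\s_2$ and assign the same image to $\0$, so that the orbit entering the escaping region, the sliding orbit, and the orbit leaving it are matched coherently with those of $\tilde{Z}$. Finally, the two sub-cases $ab>0$ and $ab<0$ (i.e.\ which of $\s_1,\s_2$ is the sliding line) are interchanged by the reflection $x_1\leftrightarrow x_2$, so it suffices to carry out the construction in one of them.
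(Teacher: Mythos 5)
Your proposal is correct and follows essentially the same route as the paper's proof: the same sign bookkeeping to show $\tilde{Z}$ realizes the same crossing/sliding configuration, the same appeal to \autoref{prop:regcos} and \autoref{prop:sliding} to match the sliding line by (essentially) the identity, and the same flow-box construction split along the separatrices through the origin --- your two subregions of $\U^{+}$ and of $\U^{-}$ are exactly the paper's $R_1$ and $R_2^{\pm}$ cut out by $\Gamma_X$ and $\Gamma_Y$. The gluing along $\s$ and at $\0$ that you flag as the remaining work is handled in the paper by the same observation you sketch, namely that on the separatrices both recipes send the foot point to $\0$ with equal transit times.
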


	\begin{figure}[!htbp]
		\centering
		\begin{tiny}
			\def\svgscale{0.35} 
			\input{./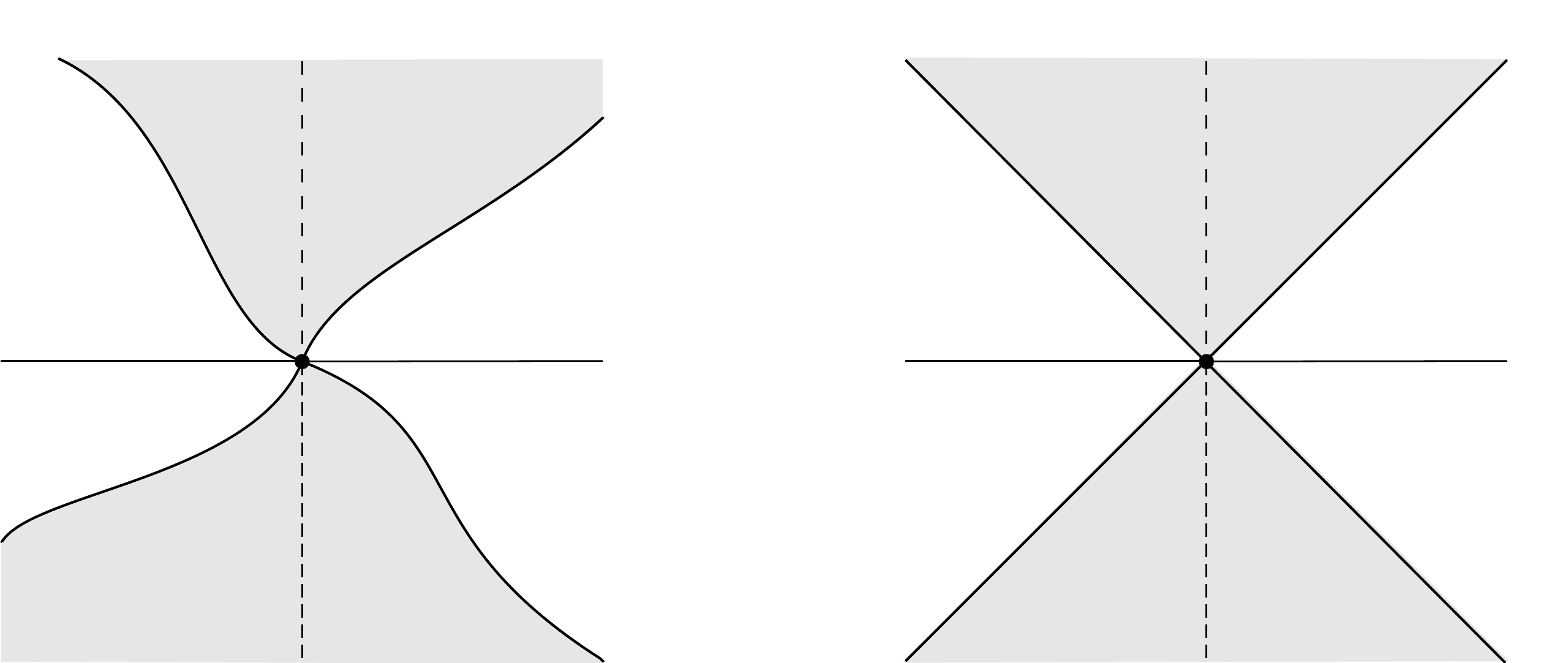_tex}
		\end{tiny}
		\caption{The cross sections $\Gamma_{X,Y}$ and regions $R^i$, $i=1,2$ when $a=1$.} 
		\label{fig:homC31}
	\end{figure}
	
	\begin{proof}
		Fix $a=b=1$. Other cases can be done similarly.
		
		In this case we have $\s_1=\overline{\s^c}$ and $\s_2 = \overline{\s^e} \cup \overline{\s^e}$, by \autoref{prop:regcos} the origin is a regular point of $Z^s_2$ and by \autoref{prop:sliding}, $Z^s_2$ is locally conjugated to the constant vector field $\tilde{Z}^s_2(p)=1$. Moreover, by \autoref{prop:sliding} there exists a homeomorphism $h^*$ with $h^*(0)=0$ which gives the $\mathcal{C}^0-$equivalence between these vector fields in a neighborhood of the origin.

		The trajectories of $X$ and $Y$ through the origin are both admissible and intersect $\s$ transversely at this point and the same occurs for $\tilde{Z}$.
		
		
		Consider the following cross sections of $\s$ given by $\Gamma_X  =  \{ \varphi_X(t,0) : t \in \R \} \cap \U$ and $  \Gamma_Y =  \{ \varphi_Y(t,0) : t \in \R \} \cap \U$. Analogously, we define the cross sections $\Gamma_{\tilde{X}}$ and $\Gamma_{\tilde{Y}}$ for $\tilde{Z}$. See  \autoref{fig:homC31}.
		
		Let $R_i \subset \U$ be the region between $\Gamma_X$ and $\Gamma_Y$ which contains $\s_i$ for $i=1,2$. In addition, decompose $R_2$ into two regions given by $R_2^\pm = R_2 \cap \U^\pm$. Proceeding in the same way as above, we define $\tilde{R}_1$ and $\tilde{R}_2^\pm$ contained in $\tilde{\U}$.

		For each $p \in \overline{R_1}$ there exists a unique $t_1(p) \in \R$ such that $q(p)=\varphi_Z(p,t_1(p)) \in \s_1$. Then set $$h(p)=\varphi_{\tilde{Z}}(\varphi_Z(p,t_1(p)),-t_1(p)).$$
		
		If $p\in \overline{R_2}$ then $t_2(p) \in \R$ is the unique time such that $q(p)=\varphi_X(p,t_2(p)) \in \s_2$ if $p \in \overline{R_2^+}$ and $q(p)=\varphi_Y(p,t_2(p)) \in \s_2$ if $p \in \overline{R_2^-}$. In both cases, $h^*(q(p))$ belongs to $\tilde{\s}_2$ and if $p \in \overline{R_2^+}$ we define $h(p)=\varphi_{\tilde{X}}(h^*(q(p)),-t_2(p))$ and in case $p \in \overline{R_2^-}$ we set $h(p)=\varphi_{\tilde{Y}}(h^*(q(p)),-t_2(p)).$

		The three functions defined above are homeomorphisms and due to the way they were constructed they also agree on the intersections, hence the map $$h(p)=\begin{cases}
		\varphi_{\tilde{Z}}(\varphi_Z(p,t_1(p)),-t_1(p)), & p \in \overline{R_1}, \\
		\varphi_{\tilde{X}}(h^*(\varphi_X(p,t_2(p)))),-t_2(p)), & p \in \overline{R_2^+}, \\
		\varphi_{\tilde{Y}}(h^*(\varphi_Y(p,t_2(p)))),-t_2(p)), & p \in \overline{R_2^-}. \\
		\end{cases}$$ is a homeomorphism which carries trajectories of $Z$ into trajectories of $\tilde{Z}$ preserving the orientation and so they are locally $\s-$equivalent.
	\end{proof}

	Finally suppose that $Z \in \Omega_0^3 \setminus \Omega^{3,1}_0$, that is, $Z$ satisfies $X_1 \cdot X_2(\0)<0$. Under these hypothesis, $Z$ is transient. Thus we need to understand what happens with the first return map $\phi_Z$ defined in \ref{eq:poincare}. 
	
	\begin{figure}[!htbp]
		\centering
		\begin{tiny}
			\def\svgscale{0.35} 
			\input{./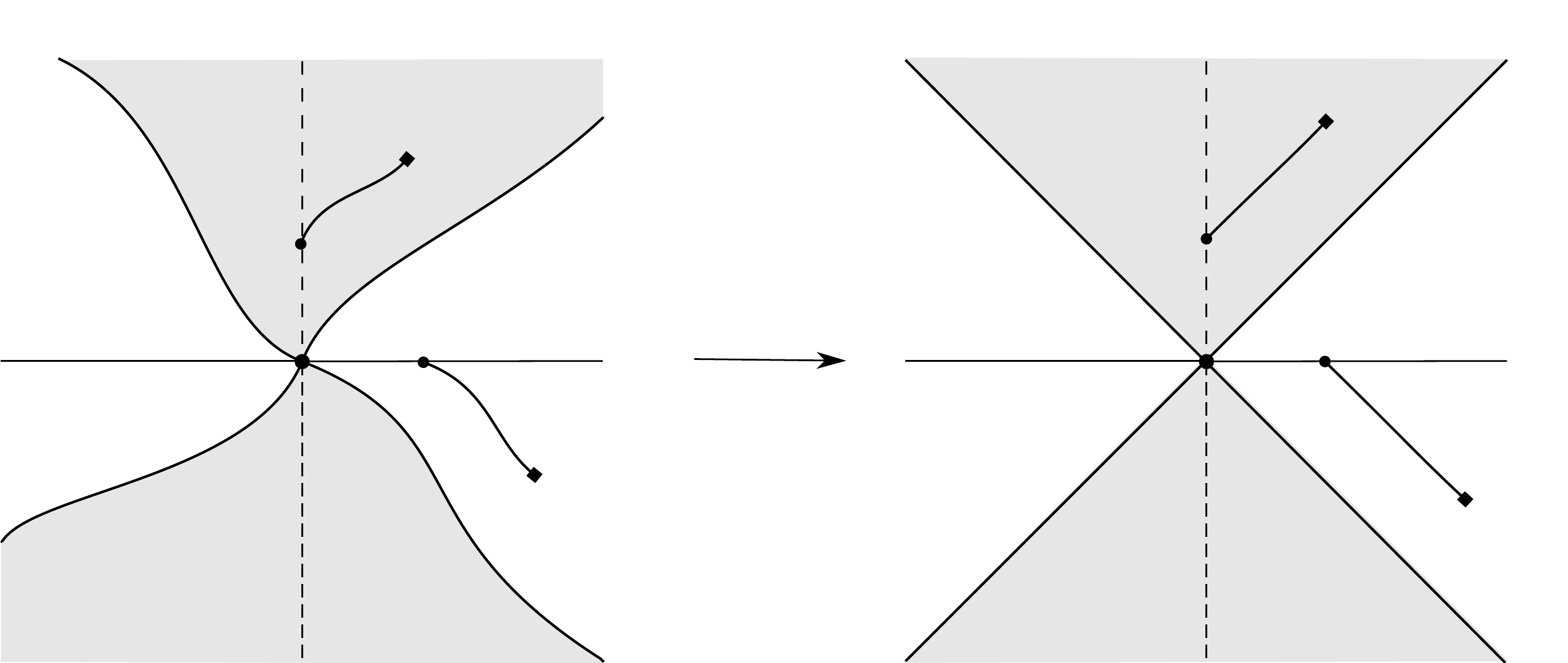_tex}
		\end{tiny}
		\caption{The homeomorphism $h$ for $a=1$. In this case $h$ is defined independently in each region $R^i$ in a way to agree on the cross sections.} 
		\label{fig:homC32}
	\end{figure}
	
	It is important to notice that since there are sliding and escaping regions in $\s$, given $p \in \s_2^-$ its $Z$ trajectory does not reach $\s_2^-$ again. Thus there are no regular periodic orbits for this case. Even though, one can exist pseudo-cycles which are preserved by $\s-$equivalences.  
	
	In general, it is not easy to compute explicitly the first return map. The next proposition gives an approximation of its expression when $Z$ is transient and transverse to $\s$ around the origin.
	
	\begin{prop} \label{prop:poinctrans}
		Let $Z \in \Omega_0^3$ satisfying $X_1 \cdot X_2(\0)<0$. Then the first return map is given by $$\phi_Z(x)= \alpha_Z^2 x + \mathcal{O}(x^2)$$ \noindent with \begin{equation} \label{alphaZ} \alpha_Z=\frac{X_1 \cdot Y_2 (\0)}{X_2 \cdot Y_1(\0)}.\end{equation}
	\end{prop}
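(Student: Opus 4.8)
The plan is to exploit the factorization $\phi_Z=(\phi_X\circ\phi_Y)^2$ of the first return map given in \ref{eq:poincare} and reduce the whole statement to a single derivative computation at the fixed point. Since $Z$ is transient under \ref{itm:C3} with $X_1\cdot X_2(\0)<0$, the origin is a fixed point of $\phi_Z$ (indeed $t_X(\0)=t_Y(\0)=0$) and $\phi_Z$ is a one-dimensional diffeomorphism fixing $0$. Hence its Taylor expansion reads $\phi_Z(x)=\phi_Z'(0)\,x+\mathcal{O}(x^2)$, and it suffices to prove that $\phi_Z'(0)=\alpha_Z^2$. By the chain rule this amounts to computing the derivatives at the origin of the two transition maps $\phi_X:\s_1\to\s_2$ and $\phi_Y:\s_2\to\s_1$ and multiplying the four factors produced by the second power.

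First I would compute $\phi_X'(\0)$. Parametrize $\s_1$ by the second coordinate $y$ and $\s_2$ by the first coordinate $x$, and write $\phi_X(0,y)=(\xi(y),0)$, where $\xi(y)$ is the first component of $\varphi_X(t_X(y);(0,y))$. Differentiating the defining relation that the second component of $\varphi_X(t_X(y);(0,y))$ vanishes, and using $\varphi_X(0;\cdot)=\mathrm{Id}$ together with $\partial_t\varphi_X(0;\0)=X(\0)$, the Implicit Function Theorem (already invoked in the text to obtain the differentiability of $t_X$) gives $t_X'(0)=-1/X_2(\0)$. Substituting into $\xi'(0)=X_1(\0)\,t_X'(0)$, where the term $\partial_y\varphi_X^{1}(0;(0,y))$ vanishes because the first component of the initial point $(0,y)$ is identically zero, yields $\phi_X'(\0)=-X_1(\0)/X_2(\0)$. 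The identical argument, with the roles of the two axes and of $X$ and $Y$ exchanged, gives $\phi_Y'(\0)=-Y_2(\0)/Y_1(\0)$.

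Composing, one finds $(\phi_X\circ\phi_Y)'(\0)=\phi_X'(\0)\,\phi_Y'(\0)=\tfrac{X_1 Y_2}{X_2 Y_1}(\0)=\alpha_Z$, with $\alpha_Z$ as in \ref{alphaZ}; note that the two minus signs cancel. The key observation is that the two copies of $\phi_X\circ\phi_Y$ appearing in the square have the same derivative at the origin: the four half-transitions composing the loop (through the four quadrants, e.g. $\s_2^-\to\s_1^+\to\s_2^+\to\s_1^-\to\s_2^-$) all linearize at $\0$ through the same data $X(\0)$ and $Y(\0)$, so each $\phi_X\circ\phi_Y$ factor contributes $\alpha_Z$ independently of the half-axis on which it starts. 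Therefore $\phi_Z'(0)=\alpha_Z^2$, which together with the Taylor expansion gives $\phi_Z(x)=\alpha_Z^2\,x+\mathcal{O}(x^2)$.

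The computation is essentially routine; the main point requiring care is the bookkeeping rather than any analytic difficulty. One must fix consistent orientations for the four half-axes and verify that the signs combine so that the loop is traversed exactly once and returns to $\s_2^-$, which is precisely why the return map is the square $(\phi_X\circ\phi_Y)^2$ and not $\phi_X\circ\phi_Y$. One must also recall that transience under \ref{itm:C3} with $X_1\cdot X_2(\0)<0$ guarantees all four transition maps are well defined near the origin, so that the composition makes sense. Finally, the positivity $\alpha_Z^2>0$ is consistent with the orientation-preserving, increasing character of $\phi_Z$ already noted in the text.
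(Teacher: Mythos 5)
Your proposal is correct and follows essentially the same route as the paper: both arguments reduce the statement to computing the linear parts of the transition maps $\phi_X$ and $\phi_Y$ at the origin (you via the Implicit Function Theorem and the chain rule, the paper via a first-order Taylor expansion of the flow, which is the same computation), obtaining $\phi_X'(0)=-X_1(\0)/X_2(\0)$ and $\phi_Y'(0)=-Y_2(\0)/Y_1(\0)$, and then composing twice to get $\phi_Z'(0)=\alpha_Z^2$. Your explicit remark that both copies of $\phi_X\circ\phi_Y$ in the square contribute the same derivative at the fixed point is a small point the paper leaves implicit, but it does not change the substance of the argument.
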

	
	\begin{proof}
		Near the origin, the trajectories of $Y$ through a point $(x,0) \in \s_2$ can be written as \begin{equation} \label{2.9}
			\varphi_Y ((x,0),t)= (x,0) + (Y_1(x,0), Y_2(x,0))t + \mathcal{O}(t^2).
		\end{equation}
		From this equation we obtain that the time to arrive $\s_1$ is $t=\displaystyle{\frac{x}{Y_1(\0)}+\mathcal{O}(x^2)}.$
		Then, using again \ref{2.9}, we have  
		$$
		\phi_Y(x)  =  \displaystyle{-\frac{Y_2(\0)}{Y_1(\0)}} x + \mathcal{O}(x^2).
		$$
Analogously, for all $(y,0)$ near the origin, we obtain
		$$\phi_X(y) =  \displaystyle{-\frac{X_1(\0)}{X_2(\0)}} y + \mathcal{O}(y^2).  
		$$
By composing twice $\phi_X$ and $\phi_Y$ we obtain the desired map.
	\end{proof}
	
	Observe that the constant $\alpha_Z$ is always negative in this case. Then by \autoref{prop:poinctrans}, the origin is a hyperbolic fixed point for $\phi_Z$ if and only if, 
	\begin{equation} \label{form:gamma}
		\gamma_Z=X_1 \cdot Y_2 (\0)+X_2 \cdot Y_1(\0) \neq 0
	\end{equation}
In addition, the origin will be stable if $\alpha_Z+1 > 0$ and unstable if $\alpha_Z + 1 < 0.$

\begin{definition} 
	Let $\Omega_0^{3,2}$ be the subset of $\Omega_0^3$ such that $X_1 \cdot X_2 (\0)<0$ and $\alpha_Z \neq -1$. 
	\end{definition}
It is clear that given $Z \in \Omega_0^{3,2}$ there exists a neighborhood $\mathcal{W}_Z$ of $Z$ such that $Z' \in \mathcal{W}_Z$ then $Z' \in \Omega_0^{3,2}$ with $\sgn{\alpha_Z+1} = \sgn{\alpha_{Z'}+1}$. As an easy consequence of the next proposition we have that $Z$ and $Z'$ are locally $\s-$equivalent and thus $Z$ is locally $\s-$structurally stable.   
	
	\begin{rem} \label{rem:hyppoinc} Observe that the origin is attractive of $\phi_Z$ if $\left| \alpha_Z \right|<1$, this is equivalent to $\gamma_Z=\left|X_1 \cdot Y_2 (\0)\right| + \left|X_2 \cdot Y_1 (\0)\right| = X_1 \cdot Y_2 (\0)+X_2 \cdot Y_1(\0) <0$. Analogously, the origin is repelling if $\gamma_Z >0.$
	\end{rem}
	
	\begin{prop} \label{prop:C32}
		Suppose $Z \in \Omega_0^{3,2}$. Then $Z$ is locally $\s$-equivalent to $$\tilde{Z}(p)= \begin{cases}
		(a,-a), & \mbox{ if } p \in \U^+, \\
		(b(1+\delta_{1c}),b(1+\delta_{-1c})), & \mbox{ if } p \in \U^-,
		\end{cases} $$ \noindent with $a=sgn(X_1(\0))$, $b=sgn(Y_1(\0))$ and $c=\sgn{\alpha_Z+1}$.
	\end{prop}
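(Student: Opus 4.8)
The plan is to follow the flow-box template of Propositions \ref{prop:C1} and \ref{prop:C31}, the genuinely new ingredient being the topological conjugacy of the first return maps, which is forced by the hyperbolicity hypothesis $\alpha_Z\neq-1$. As in those proofs I would first fix the signs, say $a=b=1$, the remaining sign choices being recovered by reflecting $\U$ about the axes. Under \ref{itm:C3} together with $X_1\cdot X_2(\0)<0$ exactly one of $\s_1,\s_2$ is a crossing set and the other is sliding--escaping; for the chosen signs one checks that $\s_1=\overline{\s^c}$ and $\s_2=\overline{\s^{s}}\cup\overline{\s^{e}}$. A direct computation with \ref{alphaZ} shows that the normal form $\tilde Z$ also lies in $\Omega_0^{3,2}$ and that $\alpha_{\tilde Z}=-\tfrac12$ when $c=1$ and $\alpha_{\tilde Z}=-2$ when $c=-1$; in particular $\sgn{\alpha_{\tilde Z}+1}=\sgn{\alpha_Z+1}=c$, so $Z$ and $\tilde Z$ share the stability type of the fixed point at the origin predicted by \autoref{rem:hyppoinc}.

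Second, I would deal with the sliding part exactly as in \autoref{prop:C31}: on $\s_2^{s,e}$ \autoref{prop:sliding} conjugates $Z_2^s$ and $\tilde Z_2^s$ to constant one-dimensional fields of the same sign, producing a homeomorphism $h^*$ of $\s_2$ with $h^*(\0)=\0$ that matches the sliding dynamics of $Z$ with that of $\tilde Z$. Third --- and this is the heart of the argument --- I would conjugate the return maps. By \autoref{prop:poinctrans} the maps $\phi_Z,\phi_{\tilde Z}\colon\s_2^-\to\s_2^-$ are orientation-preserving diffeomorphisms fixing the origin with multipliers $\alpha_Z^2$ and $\alpha_{\tilde Z}^2$, and by hyperbolicity both are contractions (if $c=1$) or both expansions (if $c=-1$). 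Two such germs of one-dimensional diffeomorphisms of a half-open interval with the same hyperbolic type are always topologically conjugate: choosing a fundamental domain $[\phi_Z(x_0),x_0]$, prescribing $\rho$ on it as any homeomorphism onto the corresponding fundamental domain of $\phi_{\tilde Z}$ matching the endpoints, and extending by the relation $\rho\circ\phi_Z=\phi_{\tilde Z}\circ\rho$, yields a homeomorphism $\rho$ of $\s_2^-$ with $\rho(\0)=\0$. The whole modulus of the problem is thus the single sign $c$, which is why the normal form depends only on $a,b,c$.

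Finally I would extend $\rho$ and $h^*$ to a neighbourhood of the origin by the flow, reproducing the sector construction of \autoref{prop:C31}. Using the orbits of $X$ and $Y$ through the origin as separatrices $\Gamma_X,\Gamma_Y$ and the analogous curves for $\tilde Z$, each point of $\U$ is carried by its $Z$-trajectory to one of the reference sections $\s_1$ or $\s_2^-$ in a well-defined relative time; one defines $h$ by transporting that section point through $\rho$ (or through $h^*$ on the sliding region, and through the already-constructed maps $\phi_X,\phi_Y$ on $\s_1$) and then flowing back by $\tilde Z$ for the same relative time. Checking that the pieces agree on the common separatrices and on $\s$, that $h$ is a homeomorphism with $h(\0)=\0$ sending $\s$ to $\tilde\s$, and that it carries $Z$-orbits to $\tilde Z$-orbits preserving orientation, completes the proof.

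I expect the main obstacle to be the single-valuedness and continuity of this flow extension near the origin. Because the origin is a geometric focus, a trajectory may meet the reference section at several successive iterates, so the definitions coming from different sections must coincide; this is precisely guaranteed by the conjugacy relation $\rho\circ\phi_Z=\phi_{\tilde Z}\circ\rho$, but one must verify that it is compatible with the independently built sliding homeomorphism $h^*$ along $\s_2$, so that no monodromy mismatch appears where a sliding orbit's asymptotic endpoint meets the domain of the return map. Continuity of $h$ at the origin --- where infinitely many spiralling sheets accumulate --- is the remaining delicate point, and it follows from the contraction or expansion of $\phi_Z$ together with $\rho(\0)=\0$.
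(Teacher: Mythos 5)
Your proposal is correct and follows essentially the same route as the paper: the paper also reduces to $a=b=c=1$, obtains the conjugacy of the return maps on $\s_2^-$ (citing Grobman--Hartman where you build it by hand with fundamental domains), handles the sliding field via \autoref{prop:sliding} and \autoref{prop:regcos}, and extends to a neighbourhood of the origin by transporting along concatenated $Y$-then-$X$ arcs with a reparametrization of time, checking agreement on $\s_2$ through the relation $h^*\circ\phi_Z=\phi_{\tilde Z}\circ h^*$. The compatibility worry you flag at the end is resolved in the paper by using a single homeomorphism of $\s_2$ (the return-map conjugacy, propagated to $\s_2^+$ by the flow) and observing that any order-preserving homeomorphism of $\s_2$ automatically carries the sliding dynamics of $Z$ to that of $\tilde Z$, since both sliding fields are regular of the same sign there.
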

	
	\begin{proof}
		We are going to fix $a, b, c = 1$, the other cases can be treated analogously. First of all, observe that $\alpha_{\tilde{Z}}=-\frac{1}{2}$, thus the origin is an stable hyperbolic fixed point to $\phi_{\tilde{Z}}$. By the Grobman-Hartman Theorem there exists a homeomorphism $h^*$ defined in a neighborhood of the origin such that $\phi_Z \circ h^* = h^* \circ \phi_{\tilde{Z}}$.
		
		Moreover, by \autoref{prop:sliding} and \autoref{prop:regcos} we have that $Z_2^s$ and $\tilde{Z}_2^s$ are $\mathcal{C}^0-$e\-qui\-va\-lent in a suitable neighborhood of the origin.

		Let consider $R^+ = \{ (x,y) \in \U : y > 0 \}$ and $R^- = \{ (x,y) \in \U : y < 0 \}$.  Since $a, b=1$ then $\s_1 = \overline{\s^c}$ and so the trajectories of $Z$ through a point of $\U^+$ or $\U^-$ is an appropriate concatenation of the trajectories of $X$ and $Y$. The same is valid for $\tilde{Z}$.

		For each $(q,0) \in \s_2^-$ there exist unique times $t_1(q),t_2(q) \in \R$ such that $(0,q_1^+)=\varphi_Y((q,0);t_1(q)) \in \s\, _1^+$ and $(q_2^+,0)=\varphi_X((0,q_1^+),t_2(q)-t_1(q)) \in \s_2^+$. This defines a continuous curve (see \autoref{fig:homC321}) in $\overline{R^+}$ \begin{equation} \eta^+_q (t) =  \begin{cases}
				\varphi_Y((q,0);t), & t \in I_q^1= [0,t_1(q)], \\
				\varphi_X(\varphi_Y((q,0);t_2(q)-t_1(q)),t), & t \in I^2_q = [t_1(q),t_2(q) ].
			\end{cases}
		\end{equation}
		
		\begin{figure}[!htbp]
			\centering
			\begin{tiny}
				\def\svgscale{0.4} 
				\input{./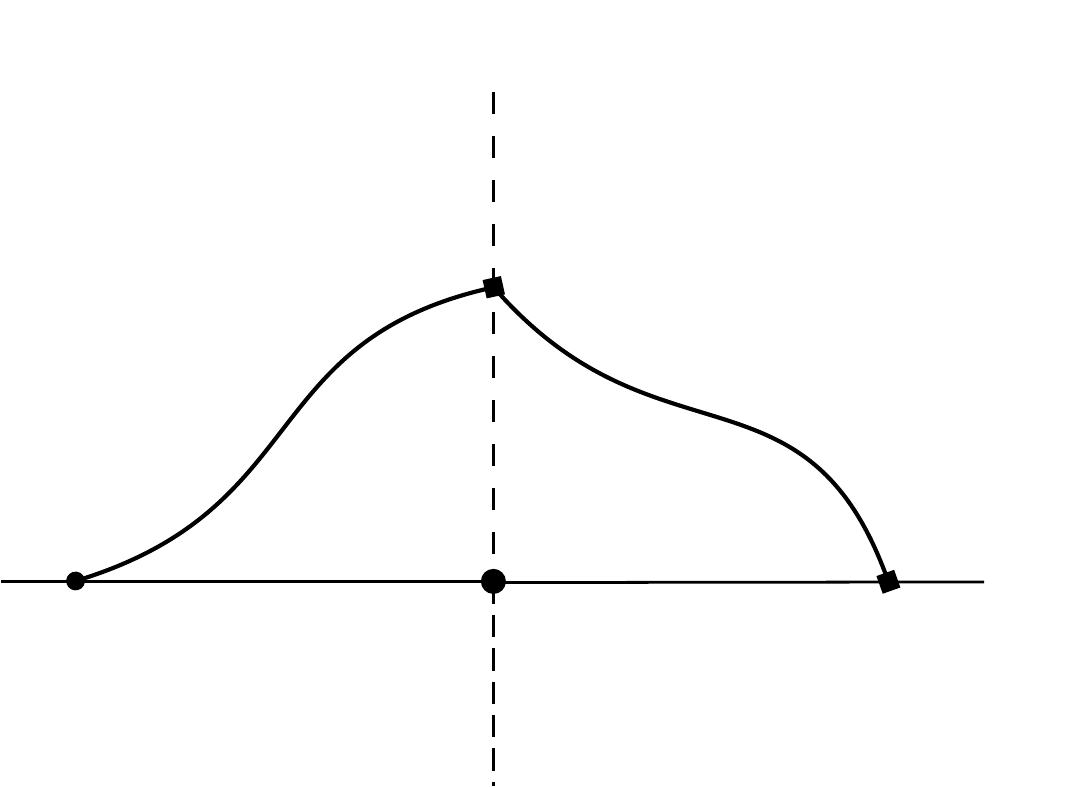_tex}
			\end{tiny}
			\caption{The trajectory of every point $q \in \s_2^-$ can be seen as a continuous curve $\eta^+_q$.} 
			\label{fig:homC321}
		\end{figure}
		
		Analogously we define the curve $\eta^-_q$ in $\overline{R^-}:$ \begin{equation} \eta^-_q (s) =  \begin{cases}
				\varphi_Y((q,0);s), & s \in J^1_q = [0,s_1(q)], \\
				\varphi_X(\varphi_Y((q;0);s_2(q)-s_1(q));s), & s \in J^2_q = [s_1(q),s_2(q)],
			\end{cases}
		\end{equation} \noindent where $s_1(q), s_2(q) \in \R$ satisfy $(0,q_1^-)=\varphi_Y((q,0);s_1(q)) \in \s_1^-$,and $(q_2^-,0)=\varphi_Y((0,q_1^-);s_2(q)-s_1(q)) \in \s_2^+.$
		
		For each $p \in \overline{R^+}$ there exists a unique point $(q(p),0) \in \s_2^-$ and a unique $t(p)$ such that $p = \eta_{q(p)}^+(t(p))$ for some $t(p) \in I_{q(p)}=I^1_{q(p)} \cup I^2_{q(p)}.$ 
		
		Consider $h^*(q(p)) \in \tilde{\s}_2^-$ and let $$\tilde{\eta}^+_{h^*(q(p))}: \tilde{I}_{h^*(q(p))} \rightarrow \overline{\tilde{R}^+}$$ be its $\tilde{Z}$ trajectory. Where  $$\tilde{I}_{h^*(q(p))}=[0,-\frac{1}{2}h^*(q(p))] \cup [-\frac{1}{2}h^*(q(p)),-h^*(q(p))].$$
		
In order to get an equivalence that preserves $\s$, we will make a reparametrization of time $\sigma^+_p$ which preserves the subintervals of  $I_{q(p)}$ and $\tilde{I}_{h^*(q(p))}$. Then define the homeomorphism $h^+$ on $\overline{R^+}$ by \begin{equation} 
			h^+(p)= \tilde{\eta}^+_{h^*(q(p))}(\sigma_p^+(t(p))).
		\end{equation}
		
		Let $r(p)$ be the intersection between the trajectory of $p$ in $R^-$ with $\s_2^-$. Proceeding in the same way we define the homeomorphism $h^-$ in $\overline{R^-}$ by \begin{equation} 
			h^-(p)= \tilde{\eta}^-_{h^*(r(p))}(\sigma_p^-(s(p))).
		\end{equation}
		
		\begin{figure}[!htbp]
			\centering
			\begin{tiny}
				\def\svgscale{0.4} 
				\input{./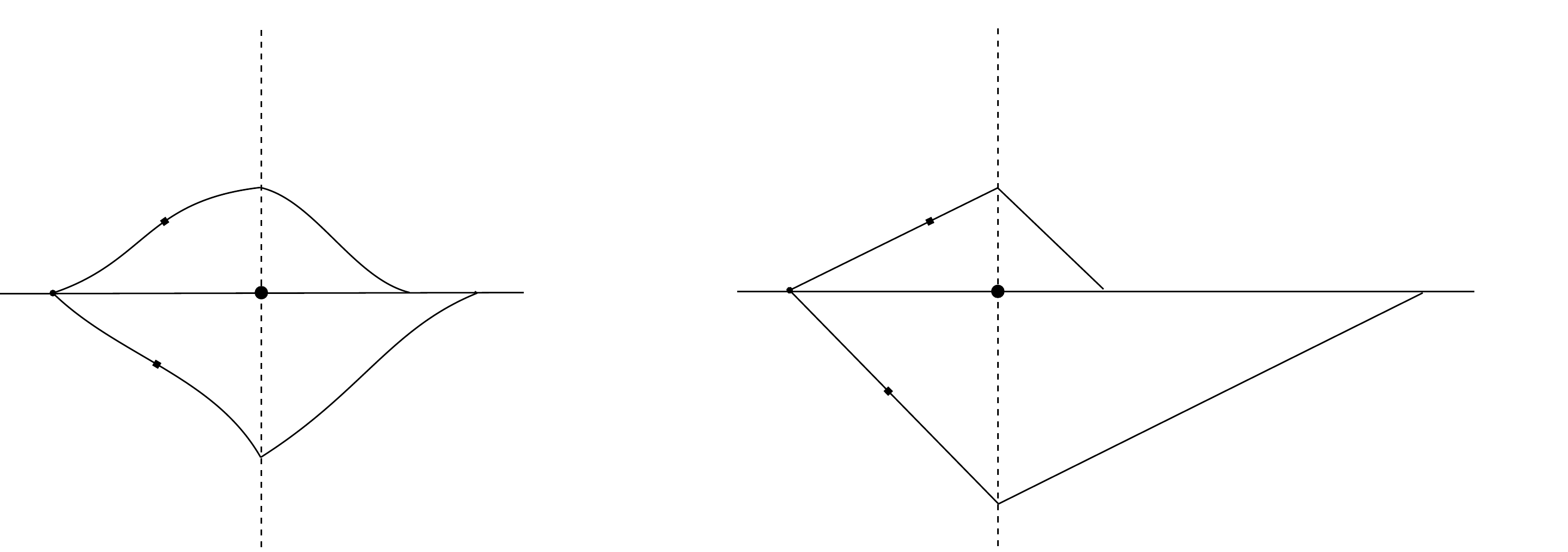_tex}
			\end{tiny}
			\caption{The homeomorphism which gives the equivalence between $Z \in \Omega^{3,2}_0$ and its normal form for $a,b,c=1$.} 
			\label{fig:homC322}
		\end{figure}
		
		Given $p \in \s_2$, then if $p \in \overline{\s_2^-}$ then $h^+(p)=h^*(p)=h^-(p)$ since both maps coincide with $h^*$ in $\s_2^-$. On the other hand, given $p \in \overline{\s_2^+}$ the trajectory of $p$ in $\overline{R^\pm}$ intersects $\s_2^-$ at the point $q(p)$ and $r(p)$, respectively. Observe that $r(p)=\phi_Z(q(p))$ and since $h^* \circ \phi_Z = \phi_{\tilde{Z}} \circ h^*$ we have that $\phi_{\tilde{Z}}(h^*(q(p)))=h^*(r(p))$, therefore $h^*(q(p))$ and $h^*(r(p))$ belong to the same trajectory, then $h^+(p)=h^-(p)$. 
		
		As the maps $h^+$ and $h^-$ agree in $\s_2$, then we can define $h: \U \rightarrow \tilde{\U}$ given by $$ h(p) = \begin{cases}
		\tilde{\eta}^+_{h^*(q(p))}(\sigma_p^+(t(p))), &  p \in \overline{R^+}, \\
		\tilde{\eta}^-_{h^*(r(p))}(\sigma_p^-(s(p))), &  p \in \overline{R^-}.
		\end{cases}
		$$

		Since all the maps involved are continuous and bijective, we conclude that $h$ is also a bijective and continuous function. The inverse of $h$ can be constructed in the same way and is also continuous, then $h$ is a homeomorphism.

		Due to the way that $h$ was constructed it is clear that $h$ maps all trajectories of $Z$ in trajectories of $\tilde{Z}$, including the trajectories of the sliding vector field, since $h$ preserves the order on $\s_2$.  
	\end{proof}
	
	Let us observe that the set $\Omega_0^{3,1} \cup \Omega_0^{3,2}$ consists of the piecewise vector fields satisfying condition $C$ of \autoref{theo:A}. 
	
	We are now in conditions to present a characterization of $\Omega_0$, the set of all locally $\s-$struc\-tu\-ral\-ly stable systems in $\Omega$. Set $$\Omega_0' =\Omega_0^1 \cup \Omega_0^2 \cup \Omega_0^{31} \cup \Omega_0^{32} \subset \Omega,$$ during this section we had shown that $\Omega_0' \subset \Omega_0.$
	On the other hand, if $Z \in \Omega \setminus \Omega_0'$ then $Z$ satisfies at least one of the following items: \begin{enumerate}[(i)]
		\item The map $\xi_i(Z)=0$ for $i=1$ or $2$; \label{itm:i}
		\item $Z \in \Omega_0^2$ but $\det{Z(\0)}=0$; \label{itm:ii}
		\item $Z \in \Omega_0^{3,2}$ with $\gamma_Z=0$ (see \ref{form:gamma}), or equivalently, $\alpha_Z=-1$ (see \ref{alphaZ}). \label{itm:iii}
	\end{enumerate}
	
	If $Z \in \Omega \setminus \Omega_0$ satisfies (\ref{itm:i}), then $Z$ is tangent to $\s$ at the origin. In this case, the family $$Z_n = \begin{cases}
	X + (\frac{1}{n},\frac{1}{n}) & \, p \in \U^+ \\
	Y + (\frac{1}{n},\frac{1}{n}) & \, p \in \U^-
	\end{cases}$$ \noindent converges to $Z$ when $n \rightarrow \infty$ and it is transverse to $\s$ at the origin for all $n \in \mathbb{N}$, then $Z$ and $Z_n$ can not be locally $\s-$equivalent. Therefore, $Z$ is not local $\s-$structurally stable.
	
	In case $Z$ satisfies (\ref{itm:ii}) or  (\ref{itm:iii}) the family $$Z_n = \begin{cases}
	X + (\frac{1}{n},0) & \, p \in \U^+ \\
	Y  & \, p \in \U^-
	\end{cases}$$ \noindent also converges to $Z$. Moreover, observe that $\det{Z_n(\0)} \neq 0$ and $\gamma_{Z_n} \neq 0$, thus one can not establish a $\s-$equivalence between $Z$ and $Z_n$. Concluding that if $Z \in \Omega$ does not belong to $\Omega_0'$ then $Z$ is not structurally stable, thus $\Omega_0'=\Omega_0$.
	
	In addition, the above argument also shows that $\Omega_0$ is dense in $\Omega$ since for every neighborhood of $Z \in \Omega$ there exists a sequence $Z_n \in \Omega_0$ such that $Z_n \rightarrow Z$ when $n \rightarrow \infty$.  Joining the results we stated by now, we have proved \autoref{theo:A}.
	
\end{section}

\begin{section}{Codimension one generic bifurcations} \label{sec:cod1}
	
	Once we have classified the generic behavior of $Z \in \Omega_0$, we will now investigate what happens in the bifurcation set $\Omega_1 = \Omega \setminus \Omega_0$.
	
	The goal of this section is to classify the codimension one generic bifurcation set, which will be called $\Xi_1 \subset \Omega_1$, that is, classify the set of the structurally stable piecewise smooth systems in $\Omega_1$ endowed with the induced topology of $\Omega$. In order to $Z$ belong to $\Xi_1$ it is necessary to break at most one of the conditions stated in \autoref{theo:A}. We must consider the following three groups:
	
	\begin{itemize}
		\item $X$ or $Y$ has a tangency point at the origin, that is, $X_i(\0)=0$ or $Y_i(\0)=0$ for $i=1,2$; \label{itm:T}
		
		\item $Z$ satisfying condition \ref{itm:C2} and $\det{Z(\0)}=0$, that is, the origin is a pseudo-equilibrium for the sliding vector fields $Z^s_i$, $i=1,2$; \label{itm:P}
		
		\item $Z$ satisfying \ref{itm:C3}, $X_1 \cdot X_2 (\0)<0$ and $\gamma_Z=0$, or equivalently, $\alpha_Z =-1$, that is, the origin is a non hyperbolic fixed point for the first return map $\phi_Z$. \label{itm:C}
	\end{itemize}
	
	More precisely, we will prove the following theorem:
	
	\begin{theo}[Local $\s-$structural stability on $\Omega_1$] \label{theo:B} 
		Let $Z \in \Omega_1=\Omega \setminus \Omega_0$. Then $Z \in \Xi_1$, that is, it has a condimension one singularity at the origin (see definition \ref{def:cod1}) if, and only if, $Z$ satisfies one of the following conditions:
		\begin{enumerate}[A.]
			\item $X_i \cdot Y_i(\0)<0$, $\det{Z}(\0)=0$ and $\displaystyle \frac{\partial }{\partial x_i} \det{Z}(\0) \neq 0$ for $i=1,2$;
			\item $X_i\cdot Y_i(\0)>0$, $X_j \cdot Y_j(\0)<0$, for $i,j=1,2, \, i \neq j$ and $X_1 \cdot X_2 (\0)<0$. In this case, $Z$ is transient and the coefficient $\alpha_Z$ of the first return map  \ref{eq:hopoinc} satisfies $\alpha_Z = -1$, then we ask the other coefficients to satisfy $\beta_Z \neq 0$ and $\eta_Z \neq 0$.
			\item the origin is a regular-fold to $Z$ (see \autoref{def:regfold}).
		\end{enumerate}
		In addition, the subset $\Xi^1$ is an open and dense set in $\Omega_1$, therefore local $\s-$structural stability is a generic property in $\Omega_1$.
	\end{theo}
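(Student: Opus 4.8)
The plan is to mirror the architecture of the proof of \autoref{theo:A}: partition $\Omega_1$ according to which single generic condition of \autoref{theo:A} is violated, attach to each stratum the nondegeneracy hypothesis that promotes it from a bifurcation to a \emph{codimension one} one, and then produce for each a normal form, an explicit local $\s$-equivalence, and a verification of the versality requirement of \autoref{def:cod1}. Since $Z\in\Omega_1$ means exactly one of the conditions $A$--$C$ of \autoref{theo:A} fails, there are precisely three possibilities, and these are the three items of the present statement: a tangency of $X$ or $Y$ with $\s$ at the origin (item $C$, the regular-fold of \autoref{def:regfold}); the collision at the origin of the two sliding pseudo-equilibria with $\det Z(\0)=0$ (item $A$); and the loss of hyperbolicity $\alpha_Z=-1$ of the first return map in the transient case (item $B$). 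The nondegeneracy hypotheses are, respectively: the fold conditions of \autoref{def:regfold}; the transversality $\partial_{x_i}\det Z(\0)\neq 0$, which by \autoref{prop:pseudoeq} makes each colliding pseudo-equilibrium hyperbolic; and the vanishing of $\alpha_Z+1$ together with $\beta_Z\neq 0$ and $\eta_Z\neq 0$. Note that $\phi_Z=(\phi_X\circ\phi_Y)^2$ is a second iterate, so $\alpha_Z=-1$ yields multiplier $\phi_Z'(0)=\alpha_Z^2=1$ and the genuine nondegeneracy is of flip type, reaching cubic order --- which is why both the quadratic coefficient $\beta_Z$ and the cubic coefficient $\eta_Z$ enter.

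For the $\s$-equivalence on each stratum I would recycle the flow-box constructions of Propositions \ref{prop:C1}--\ref{prop:C32}. In the regular-fold case $C$ the transverse field supplies a genuine cross-section, and one flows points to $\s$ and back while matching the parabolic contact of the tangent branch with its quadratic model; this is the classical fold normal form adapted to the two branches $\s_1,\s_2$. In the pseudo-equilibrium case $A$, hyperbolicity of the pseudo-equilibria lets me conjugate each one-dimensional sliding field $Z_i^s$ to its linear model via \autoref{prop:sliding} and \autoref{prop:pseudoeq}, and then extend the homeomorphism across the neighboring crossing and transient regions exactly as in \autoref{prop:C2}. In the transient flip case $B$ the difficulty concentrates: the linearization carries no information, so I must compute the local expansion of $\phi_Z$ to third order (as in \autoref{prop:poinctrans} but continued further) to read off $\beta_Z,\eta_Z$, conjugate $\phi_Z$ on the section $\s_2^-$ to its flip normal form, and propagate this one-dimensional conjugacy through the transit curves $\eta_q^\pm$ to a planar homeomorphism exactly as in \autoref{prop:C32}.

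The genuinely new ingredient, absent from \autoref{theo:A}, is the unfolding statement: for each $Z\in\Xi_1$ one must show that any one-parameter unfolding transverse to $\Xi_1$ is weak equivalent to the normal-form unfolding. My plan is to exhibit a scalar defining function for each stratum --- $X_i(\0)$ (or $Y_i(\0)$) in case $C$, $\det Z(\0)$ in case $A$, and $\gamma_Z$ (equivalently $\alpha_Z+1$) in case $B$ --- whose differential is nonzero precisely under the stated nondegeneracy hypotheses. This simultaneously identifies $\Xi_1$ locally with the regular zero set of a submersion, hence an embedded codimension one submanifold of $\Omega$, and shows that the defining function restricts to a regular parameter along any transverse unfolding. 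Weak equivalence then follows by reparametrizing the transverse parameter to the normal-form parameter and gluing the already-constructed fiberwise $\s$-equivalences, using that the bifurcating object --- the passing fold, the colliding pseudo-equilibria, or the fixed point of $\phi_Z$ --- moves monotonically with the defining function.

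Openness of $\Xi_1$ in $\Omega_1$ is immediate from continuity, since each stratum is cut out by open sign conditions together with the nonvanishing of a derivative or coefficient, all continuous in $Z$. Density follows as at the end of the proof of \autoref{theo:A}: any degenerate $Z\in\Omega_1\setminus\Xi_1$ --- a higher-order tangency, a non-transverse pseudo-equilibrium collision, or a fixed point with $\beta_Z=0$ or $\eta_Z=0$ --- is approximated by an explicit family (adding a small constant vector to $X$ or $Y$) that restores the missing nondegeneracy while remaining in $\Omega_1$. I expect case $B$ to be the main obstacle throughout: deriving the correct cubic coefficient of $\phi_Z$, checking that $\beta_Z\neq0,\ \eta_Z\neq0$ is exactly the flip nondegeneracy, and carrying the non-hyperbolic conjugacy both fiberwise and across the unfolding parameter are substantially more delicate than the hyperbolic normal forms that sufficed for \autoref{theo:A}.
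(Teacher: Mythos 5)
Your proposal follows essentially the same route as the paper: the same three strata (double pseudo-equilibrium, the degenerate return map with $\alpha_Z=-1$, and the regular fold), the same Implicit-Function-Theorem/submersion argument to realize each stratum as an embedded codimension one submanifold open in $\Omega_1$, the same recycling of the flow-box equivalences from the proof of \autoref{theo:A}, and the same perturbation argument for density. The only cosmetic difference is that you take $\det Z(\0)$, $\alpha_Z+1$ and $X_i(\0)$ as defining functions, whereas the paper applies the IFT to locate the bifurcating object (the pseudo-equilibrium, the critical point of the return map, the fold point) and uses that location as the local coordinate transverse to the stratum; the two choices differ by a nonzero factor, as the paper's own chain-rule computations of $Dg_{Z_0}$ show.
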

	
	The rest of this section is devoted to prove this theorem.
	
	\begin{subsection}{The double pseudo-equilibrium bifurcation}

		We consider now the bifurcation derived from the case where $Z \in \Omega_1$ satisfies condition $A$. As $\det{Z}(\0)=0$, the origin is an pseudo-equilibrium for both sliding vector fields $Z_i^s$ (see \eqref{eq:slidingdetdef}). Moreover, as $\frac{\partial}{\partial x_j} \det{Z(\0)} \neq 0 \mbox{ for } j=1,2,$ by proposition \ref{prop:pseudoeq} we know that the origin is a hyperbolic pseudo-equilibrium for both sliding vector fields.
		
		\begin{defi} \label{def:B1}
			Let $\Xi_1^1 \subset \Omega_1$ be the subset containing all $Z$ satisfying condition $A$ of \autoref{theo:B}, equivalently, for which the origin is a hyperbolic pseudo-equilibrium of the sliding vector fields $Z_i^s, \, i=1,2$.  
			%
		\end{defi}

		
		
		\begin{prop} \label{prop:B1}
			The set $\Xi_1^1$ is an embedded codimension one submanifold of $\Omega$ and it is open in $\Omega_1$.
		\end{prop}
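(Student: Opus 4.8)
The plan is to realize $\Xi_1^1$ locally as the zero level set of a real-valued submersion defined on an open subset of the Banach manifold $\Omega$, and then to invoke the regular value (implicit function) theorem for Banach manifolds. Concretely, I would introduce the evaluation map
\[
F: \Omega \to \R, \qquad F(Z) = \det Z(\0) = (X_1 Y_2 - X_2 Y_1)(\0),
\]
and let $\mathcal{O} \subset \Omega$ be the open set of all vector fields satisfying condition \ref{itm:C2} together with $\dr{x_i}\det Z(\0) \neq 0$ for $i=1,2$. Both are open conditions, so $\mathcal{O}$ is genuinely open, and by \autoref{prop:pseudoeq} the fields in $\mathcal{O}\cap F^{-1}(0)$ are exactly those for which the origin is a hyperbolic pseudo-equilibrium of both $Z_i^s$; that is, $\Xi_1^1 = \mathcal{O} \cap F^{-1}(0)$.

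First I would check that $F$ is of class $C^r$ on $\Omega$. The maps $V \mapsto V_i(\0)$ are bounded linear functionals on $\mathfrak{X} = \mathfrak{X}^r(\U)$ (continuity of evaluation at $\0$ in the $C^r$-topology), and $F$ is a fixed quadratic polynomial in these functionals, hence smooth, with differential at $Z=(X,Y)$ in a direction $(V,W) \in \Omega$ given by
\[
DF_Z(V,W) = V_1(\0) Y_2(\0) + X_1(\0) W_2(\0) - V_2(\0) Y_1(\0) - X_2(\0) W_1(\0).
\]
On $\mathcal{O}$ condition \ref{itm:C2} forces $X_i(\0)Y_i(\0) < 0$, so none of $X_i(\0), Y_i(\0)$ vanish. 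Taking the constant perturbation $V \equiv (1,0)$, $W \equiv (0,0)$ gives $DF_Z(V,0) = Y_2(\0) \neq 0$, so $DF_Z$ is surjective onto $\R$ at every $Z \in \Xi_1^1$. Its kernel is a closed hyperplane and, being of finite codimension, automatically splits. Thus $0$ is a regular value of $F|_{\mathcal{O}}$, and the Banach regular value theorem yields that $\Xi_1^1 = \mathcal{O} \cap F^{-1}(0)$ is an embedded $C^r$ submanifold of $\mathcal{O}$ of codimension one. Since $\mathcal{O}$ is open in $\Omega$, this is an embedded codimension one submanifold of $\Omega$.

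It remains to prove that $\Xi_1^1$ is open in $\Omega_1$. Given $Z \in \Xi_1^1$, I would take the neighborhood $\mathcal{V}_Z = \mathcal{O}$, on which condition \ref{itm:C2} and $\dr{x_i}\det(\cdot)(\0) \neq 0$ hold. For any $Z' \in \mathcal{V}_Z \cap \Omega_1$, condition \ref{itm:C2} holds, so by \autoref{theo:A} (case $B$) the field $Z'$ would belong to $\Omega_0$ as soon as $\det Z'(\0) \neq 0$; since $Z' \notin \Omega_0$ we must have $\det Z'(\0) = 0$. Together with $\dr{x_i}\det Z'(\0) \neq 0$ this is precisely condition $A$ of \autoref{theo:B}, i.e. $Z' \in \Xi_1^1$. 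Hence $\mathcal{V}_Z \cap \Omega_1 \subset \Xi_1^1$, which is the desired openness.

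The only genuinely delicate points are bookkeeping rather than conceptual: verifying that evaluation and first-order evaluation of germs are continuous linear functionals in the chosen $C^r$-topology, so that $F$ is smooth and $\mathcal{O}$ is open, and noting that in codimension one the splitting hypothesis of the implicit function theorem is automatic. The dynamical content, namely that breaking $\det Z(\0) = 0$ among fields satisfying \ref{itm:C2} throws one back into $\Omega_0$, is supplied entirely by \autoref{theo:A}, so no new equivalence needs to be constructed here.
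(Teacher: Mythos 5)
Your proof is correct, and it takes a slightly more direct route than the paper while resting on the same underlying idea (exhibit $\Xi_1^1$ locally as the regular zero level of a real-valued Fr\'echet differentiable map and invoke the Banach regular value theorem). The difference is in the choice of defining function: you use the evaluation $F(Z)=\det Z(\0)$ directly on the open set $\mathcal{O}$ where condition \ref{itm:C2} and $\frac{\partial}{\partial x_i}\det Z(\0)\neq 0$ hold, and check surjectivity of $DF_Z$ by hand with a constant perturbation; the paper instead first applies an auxiliary Implicit Function Theorem to the map $(Z,(x_1,x_2))\mapsto (\det Z(0,x_1),\det Z(x_2,0))$ to produce the functions $g_i(Z)$ locating the pseudo-equilibria on $\s_i$, and then uses $g_1$ as the local defining function, computing $Dg_{Z_0}=D_Z\det Z_0(\0)/\partial_{x_2}\det Z_0(\0)$. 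Since this shows the two differentials are proportional and the two zero sets coincide on $\mathcal{O}$ (because $\det Z(0,g_1(Z))=0$ with $\partial_{x_2}\det\neq 0$ forces $g_1(Z)=0\iff \det Z(\0)=0$), the arguments are locally equivalent; yours is leaner for the purpose of this proposition, while the paper's detour buys the maps $P_i(\delta)=g_i(Z_\delta)$ that are reused immediately afterwards to track the pseudo-equilibria along unfoldings. Your openness argument (any $Z'\in\mathcal{O}\cap\Omega_1$ must have $\det Z'(\0)=0$ by case $B$ of \autoref{theo:A}, hence lies in $\Xi_1^1$) is exactly the content of the paper's identity $g_1^{-1}(0)=\Xi_1^1\cap\mathcal{W}_0=\Omega_1\cap\mathcal{W}_0$, and your remarks on continuity of evaluation functionals and automatic splitting of a closed finite-codimension kernel correctly fill in the functional-analytic details the paper leaves implicit.
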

		
		\begin{proof}
			
			Let $Z_0 = (X^0,Y^0) \in \Xi_1^1$. Since conditions $X^0_i \cdot Y^0_i (\0)<0$ and $\frac{\partial}{\partial x_i} \det{Z_0} (\0) \neq 0$ for $i=1,2$ of \autoref{theo:B} are open, there is a neighborhood $\mathcal{V}_0 \subset \Omega$ of $Z_0$ in which these conditions are fulfilled for all $Z \in \mathcal{V}_0$. Moreover, the sign of $X_i \cdot Y_i (\0)$ and $\frac{\partial}{\partial x_i} \det{Z(\0)}$ for $i=1,2$ are constant in this neighborhood.

			Considering the Frechet differentiable  map $$\begin{array}{llll} \eta: & \mathcal{V}_0 \times \mathcal{D}_0 & \rightarrow & \R^2 \\ & (Z,(x_1,x_2)) & \mapsto &(\det{Z(0,x_1)},\det{Z(x_2,0})) \end{array}$$ where $\mathcal{D}_0$ is a neighborhood of the origin in $\R^2$ such that $X_i \cdot Y_i (p) <0$ for $p \in \s_i$ and $\sgn{\frac{\partial}{\partial x_i} \det{Z(p)}}$ is constant for $i=1,2$.

			By the Implicit Function Theorem applied to $\eta$ at the point $(Z_0,(0,0))$, we obtain a Frechet differentiable map 
			\begin{equation}\label{func:g}
				g: Z \in \mathcal{W}_0 \subset \mathcal{V}_0 \mapsto (g_1(Z),g_2(Z)) \in \U_0 \subset \mathcal{D}_0,
			\end{equation}
			satisfying $$\eta(Z,g(Z))=(\det{Z(0,g_1(Z))},\det{Z(g_2(Z),0)})=(0,0), \, \mbox{ for all } Z \in \mathcal{W}_0,$$
			where $\mathcal{W}_0$ and $\U_0$ are neighborhoods of $Z_0$ and $\0$, respectively.

			The following arguments prove simultaneously that  $\Xi_1^1$ is an embedded codimension one submanifold of $\Omega$ and an open set in $\Omega_1$.
			
			Consider the map $g_1: Z \in \mathcal{W}_0 \mapsto g_1(Z) \in \R.$   It is clear that 
			$$g^{-1}_1(0) = \Xi_1^1 \cap \mathcal{W}_0 = \Omega_1 \cap \mathcal{W}_0 \subset \Xi_1^1.$$
			In fact, given $Z \in g^{-1}_1(0)$ then $g_1(Z)=0$ and therefore $\det Z(\0)=0$, thus as $Z \in \mathcal{V}_0$,  $Z \in \Xi_1^1 \cap \mathcal{W}_0$. On the other hand, if $Z \in \mathcal{W}_0 \cap \Omega_1$ then $\det Z(\0)=0$, by the exposed above, $g_1(Z)$ is the unique point such that $\det{Z(0,x_2)}=0$, then $g_1(Z)=0$, what proves the desired equality. 
			
			To finish the proof one needs to show that $Dg_1(Z_0) \neq 0$. In fact, using the chain rule to the map $\det Z(0,g_1(Z))=0$, we obtain $$Dg_{Z_0}= \frac{D_Z \det{Z_0}(\0)}{\partial_{x_2}(\det){Z_0}(\0)}$$ which is a non zero linear functional.
			
		\end{proof}

		Now we are going to study the unfoldings of $Z \in \Xi_1^1$. More precisely, we are going to show that every local unfolding of $Z$ has exactly the same behavior.
		
		Let $Z \in \Xi_1^1$ and $\mathcal{W}_0 \subset \Omega$ be the neighborhood of $Z$ given in \ref{func:g} where $g$ is defined. One can always suppose that $\mathcal{W}_0$ is connected, therefore the  submanifold $\Xi_1^1$ splits $\mathcal{W}_0$ into two connected open subsets $ \, \mathcal{W}_0^\pm = g^{-1}_1(\R^\pm)$.
		
		Let $\gamma(\delta)= Z_\delta$ a versal unfolding of $Z$. With no loss of generality, suppose that $\gamma((-\delta_0,0)) \subset \mathcal{W}_0^-$ and $\gamma((0,\delta_0)) \subset \mathcal{W}_0^+.$
		
		It follows from \autoref{prop:B1} that for $\delta \neq 0$ the sliding vector fields $(Z_\delta)_i^s$, $i=1,2$ have a unique pseudo-equilibrium $P_i(\delta)=g_i(Z_\delta) \in \s_i, \, i=1,2$, which has the same stability as the origin has for $Z_i^s$. Moreover, if $\delta \neq 0$ then $\det{Z_\delta(\0)}\neq 0$, therefore the origin is a regular point of the sliding vector fields $(Z_\delta)_i^s$.
		
		To simplify our analysis, fix $X_i(\0)>0$ for $i=1,2$ (and therefore, $Y_i(\0)<0$ to satisfy condition $A$ of theorem \ref{theo:B}), the other case is just the reflection through the $y-$axis. This case leads to four non equivalent configurations, depending on the stability of the origin for the sliding vector fields $Z^s_i$, $i=1,2$.  However, we focus only in two, the others can be obtained analogously.
		
		Differentiating \ref{eq:slidingdef} and using  \autoref{prop:B1} it follows that \begin{equation} 
			\sgn{((Z_\delta)_i^s)'(P_i(\delta))}=(-1)^{i-1} c_i,
		\end{equation} 
		where $c_i = \displaystyle{\frac{\partial}{\partial x_j} \det{Z}(\0)}, \, i,j=1,2 \, \mbox{and } \, i \neq j .$  
		
		Moreover, by \autoref{prop:sliding}, \begin{equation} 
			\sgn{(Z_\delta)_i^s(\0)}=(-1)^{i-1} \det{Z_\delta}(\0).
		\end{equation}
		
		\begin{figure}[!htbp]
			\centering
			\begin{tiny}
				\subfigure[\label{fig:D2} Double pseudo-Equilibrium: $X_i(\0)>0$ and $c_i=1$ for $i=1,2$.]{
					\def\svgscale{0.25}
					\input{./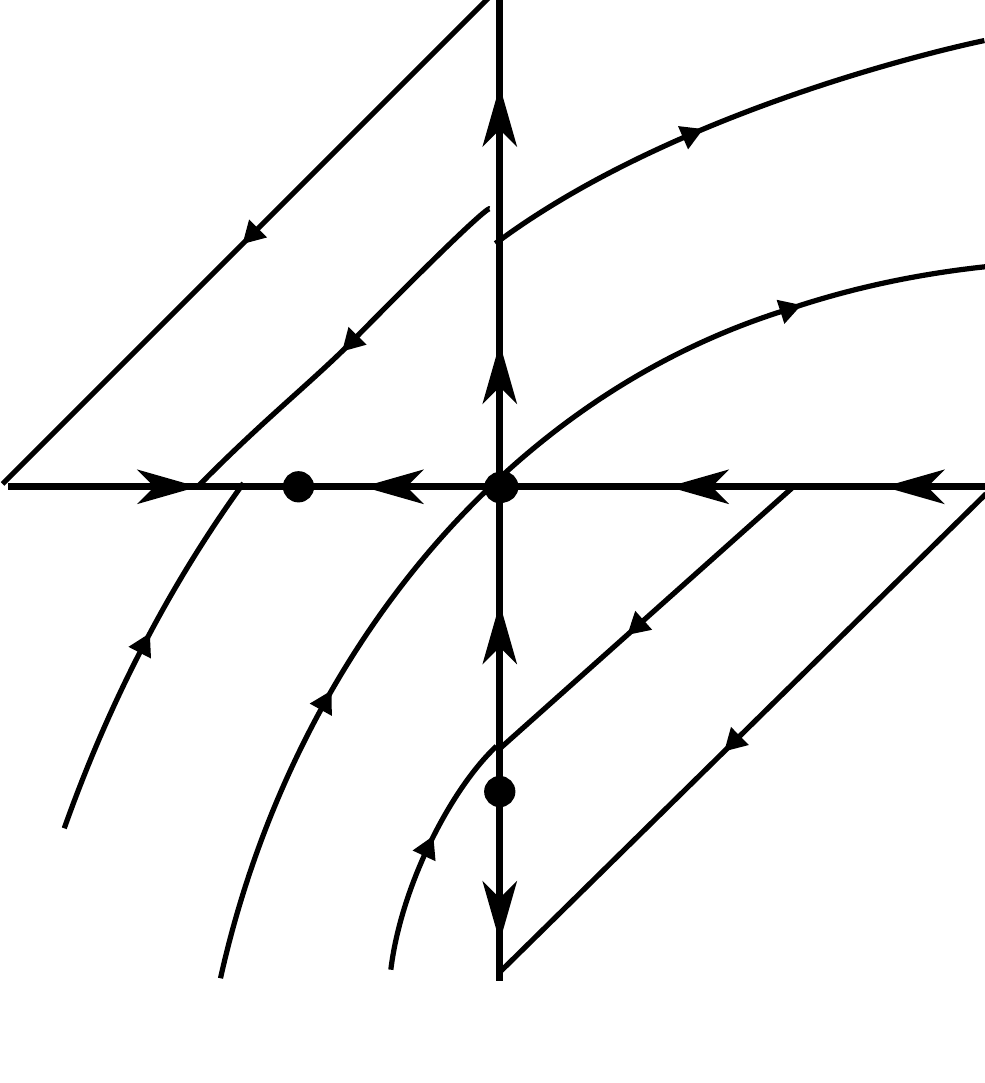_tex}
					\hspace{1cm}
					\def\svgscale{0.25}
					\input{./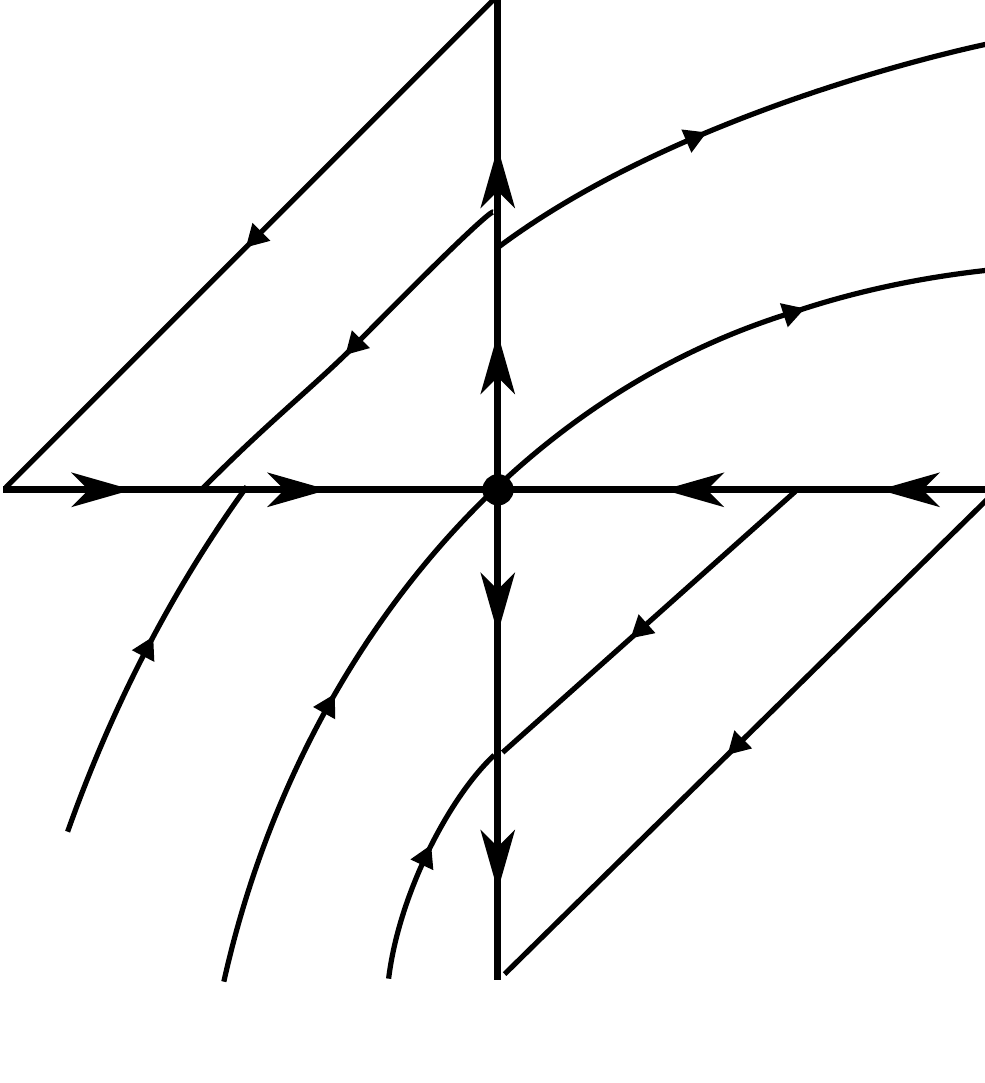_tex}
					\hspace{1cm}
					\def\svgscale{0.25}
					\input{./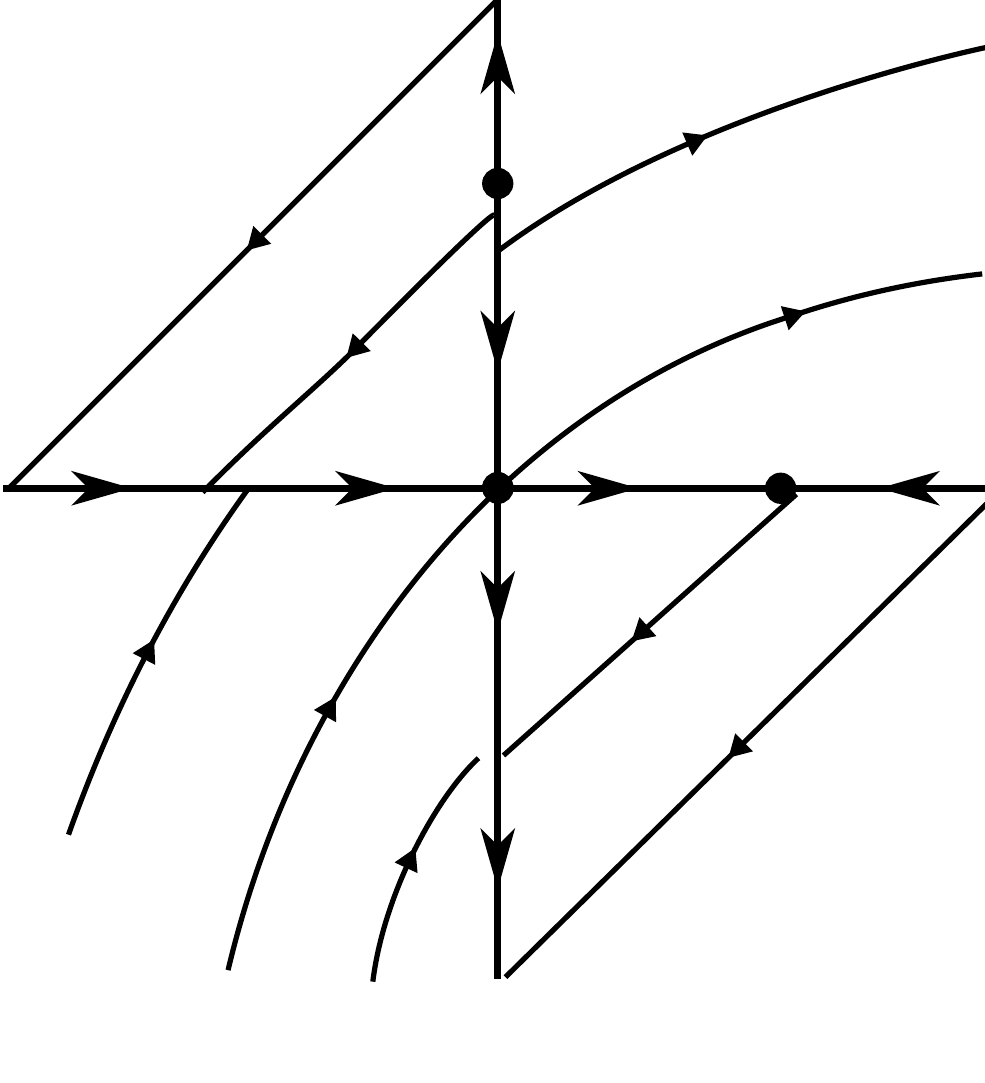_tex}}
				\subfigure[\label{fig:D1} Double pseudo-Equilibrium: $X_i(\0)>0$ for $i=1,2$ and $c_1=1$ and $c_2=-1$.]{
					\def\svgscale{0.25}
					\input{./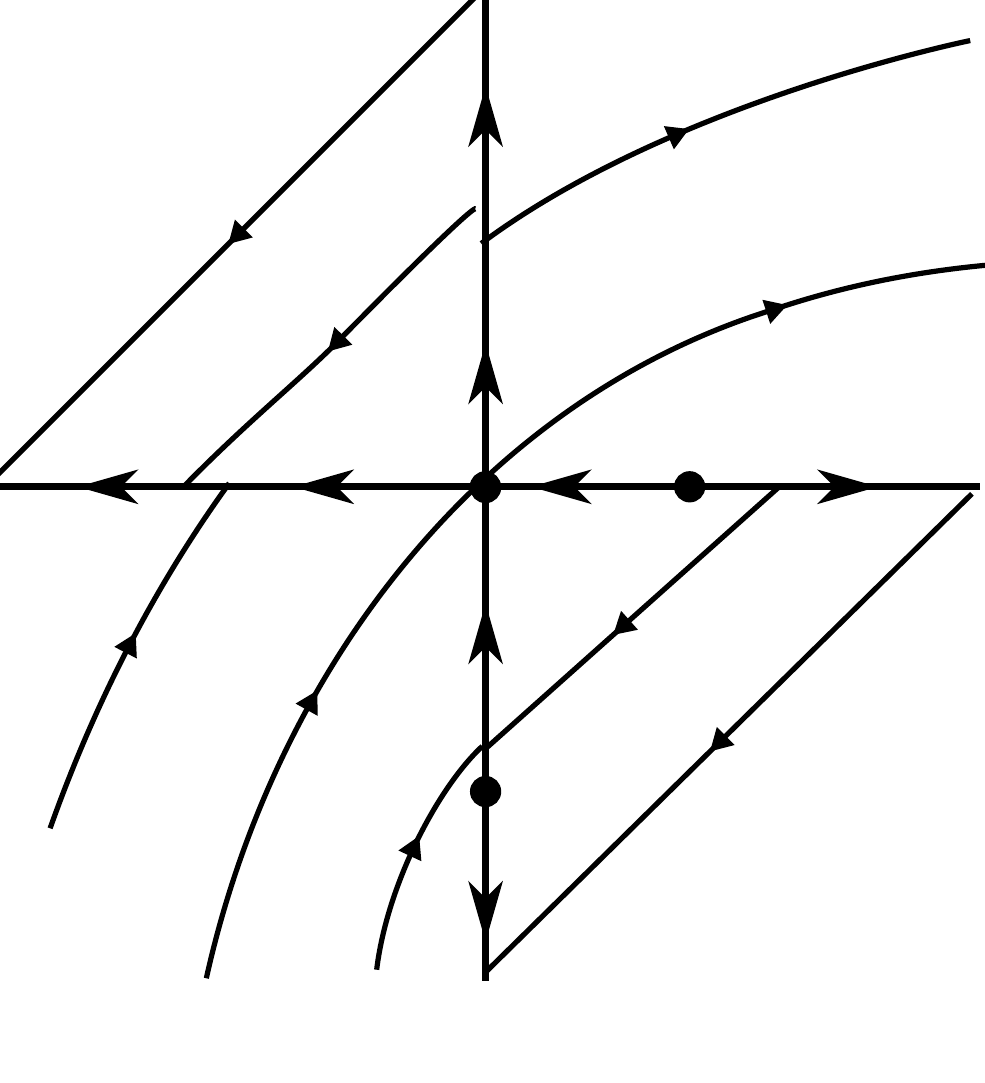_tex}
					\hspace{1cm}
					\def\svgscale{0.25}
					\input{./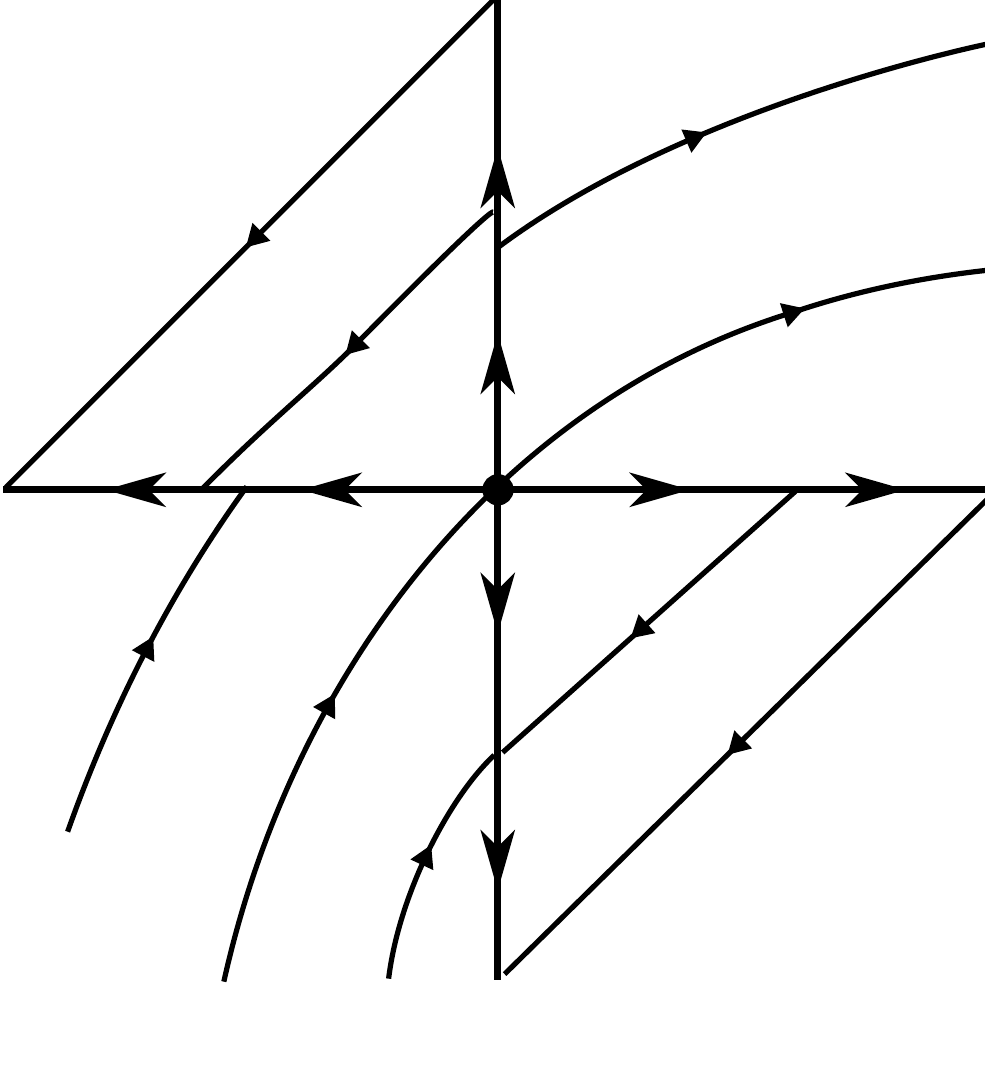_tex}
					\hspace{1cm}
					\def\svgscale{0.25}
					\input{./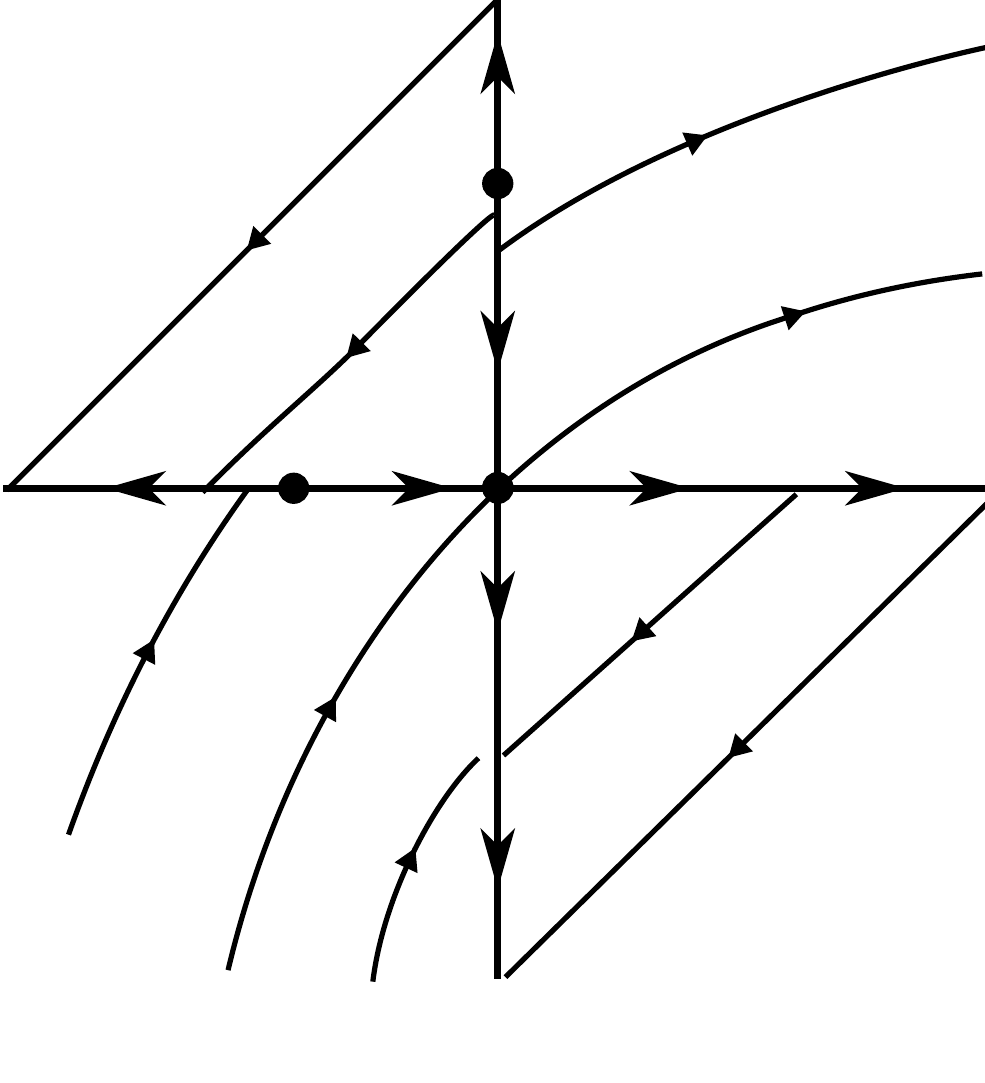_tex}}
			\end{tiny}
			\caption{The unfolding of the double-equilibrium singularity}
			\label{fig:D}
		\end{figure}
		
		One must consider four different cases depending on the signs of $c_1$ and $c_2$. We focus in only two, $c_1=c_2=1$ and $c_1=1$ and $c_2=-1$.

		$\bullet$ If $c_1=c_2=1$, for $\delta=0$ the origin is an unstable pseudo-equilibrium for $Z_1^s$ and stable for $Z_2^s$, as in \autoref{fig:D2}. In one hand, if $\delta<0$, then $Z_\delta \in \mathcal{W}_0^-$ and thus unstable pseudo-equilibrium $P_1(\delta)=g_1(Z_\delta) \in \s_1^-$. Consequently, $(Z_\delta)_1^s(0)>0$, therefore $\det Z_\delta(\0)>0$, thus $(Z_\delta)_2^s(0)<0$. Then the stable pseudo-equilibrium $P_2(\delta)$ belongs to $\s_2^-$. On the other hand, if $\delta>0$, then $Z_\delta \in \mathcal{W}_0^+$ therefore $P_1(\delta) \in \s_1^+$. It follows that $(Z_\delta)_1^s(0)<0$ and by the same argument we have $(Z_\delta)_2^s(0)>0$. Concluding that the attracting pseudo-equilibrium $P_2(\delta)$ belongs to $\s_2^+$.
		
		
		$\bullet$ In the case $c_1=1$ and $c_2=-1$, for $\delta=0$ the origin is an unstable pseudo-equilibrium for both sliding vector fields, as in \autoref{fig:D1}. Using the same argument as in the case $c_1=c_2=1$ we conclude that for $\delta<0$ then $P_1(\delta) \in \s_1^-$ and $P_2(\delta) \in \s_2^+$ and for $\delta>0$ then $P_1(\delta) \in \s_1^+$ and $P_2(\delta) \in \s_2^-.$
		
		As a consequence of the above discussion, we have the following proposition:
		
		\begin{prop} \label{C21unfolding}
			Let $Z \in \Xi_1^1$. Then any versal unfolding of $Z$ is locally weak equivalent to the one parameter family 
			\begin{equation}
				\tilde{Z}_\alpha = \begin{cases}	
					\tilde{X}_\alpha (x_1,x_2) = \left( \begin{array}{c} a- b  c_2 x_1 \\ b +a \alpha \end{array} \right), & x_1 \cdot x_2>0 \\
					\tilde{Y} (x_1,x_2) = \left( \begin{array}{c} -a \\ - b+ a  c_1 x_2 \end{array} \right), & x_1 \cdot x_2<0
				\end{cases}
			\end{equation}
			where $a=\sgn{X_1(\0)}$, $b=\sgn{X_2(\0)}$, $c_i=\sgn{\frac{\partial}{\partial x_j} \det{Z(\0)}}$ with $i,j=1,2$ and $i \neq j.$
		\end{prop}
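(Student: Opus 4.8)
The plan is to show that $\tilde{Z}_\alpha$ is, up to the change of parameters, the universal model for this bifurcation. Since any two versal unfoldings of $Z$ are mutually locally weak equivalent, it suffices to exhibit, for an \emph{arbitrary} versal unfolding $Z_\delta$ of $Z$, a homeomorphic change of parameters $\mu$ together with a family of local $\s$-equivalences between $Z_\delta$ and $\tilde{Z}_{\mu(\delta)}$. First I would check that $\tilde{Z}_\alpha$ is a legitimate unfolding sitting in $\Xi_1^1$ at $\alpha=0$ and carrying the same invariants as $Z$. A direct computation gives
\begin{equation*}
\det \tilde{Z}_\alpha(x_1,x_2) = \alpha + c_2\, x_1 + c_1\, x_2 - ab\, c_1 c_2\, x_1 x_2,
\end{equation*}
so that $\det \tilde{Z}_\alpha(\0)=\alpha$, while $\partial_{x_2}\det \tilde{Z}_\alpha(\0)=c_1$ and $\partial_{x_1}\det \tilde{Z}_\alpha(\0)=c_2$. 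Together with the signs $\tilde{X}(\0)=(a,b)$ and $\tilde{Y}(\0)=(-a,-b)$ at $\alpha=0$, and recalling $c_i=\sgn{\partial_{x_j}\det Z(\0)}$, this exhibits $\tilde{Z}_0\in\Xi_1^1$ with exactly the data $(a,b,c_1,c_2)$ of $Z$; by the normal-form result for condition $A$, $\tilde{Z}_0$ is then locally $\s$-equivalent to $Z$.

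Next I would align the parameters. Because $Z_\delta$ is versal and $\Xi_1^1$ is an embedded codimension one submanifold (\autoref{prop:B1}), the family is transverse to $\Xi_1^1$ at $\delta=0$, i.e. $\delta\mapsto\det Z_\delta(\0)$ is strictly monotone near $0$. Hence I may set $\mu(\delta)=\det Z_\delta(\0)$ after a fixed orientation choice; this is a homeomorphism of a neighborhood of $0$, and since $\det\tilde{Z}_{\mu(\delta)}(\0)=\mu(\delta)$ it places $Z_\delta$ and $\tilde{Z}_{\mu(\delta)}$ in the same regime $\delta<0,\ =0,\ >0$. The implicit map $g$ of \ref{func:g} tracks the two pseudo-equilibria $P_i(\delta)=g_i(Z_\delta)\in\s_i$, which are hyperbolic by \autoref{prop:pseudoeq}; by the discussion preceding the statement, their branch ($\s_i^{+}$ or $\s_i^{-}$) and stability as $\delta$ crosses $0$ are dictated precisely by $c_1,c_2$ --- the very configuration realised by $\tilde{Z}_{\mu(\delta)}$.

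It then remains, for each fixed $\delta$, to build an orientation-preserving homeomorphism $h_\delta$ carrying trajectories of $Z_\delta$ to those of $\tilde{Z}_{\mu(\delta)}$ and sending $\s$ to $\s$. For $\delta\neq 0$ one has $\det Z_\delta(\0)\neq 0$, so at the origin $Z_\delta$ lies in the regime of condition \ref{itm:C2}; away from the unique hyperbolic pseudo-equilibrium on each branch I would reuse the region-by-region construction of \autoref{prop:C2}, while near $P_i(\delta)$ the sliding field is a one-dimensional vector field with a single hyperbolic zero and is thus conjugate to its sign normal form by \autoref{prop:sliding}, the two pieces being glued through cross-sections of $X$ and $Y$ as in \autoref{prop:C1}. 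For $\delta=0$ the origin is a double hyperbolic pseudo-equilibrium: I would match the sliding dynamics on $\s_1$ and $\s_2$ using \autoref{prop:sliding} (and \autoref{prop:regcos} on any crossing branch) and extend off $\s$ by the transverse flows of $X$ and $Y$, defining $h_0$ separately on $\U^+_\pm$ and $\U^-_\pm$ so that the pieces agree on $\s$ and fix the origin.

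The step I expect to be the main obstacle is precisely this gluing: ensuring that the time reparametrizations used near the moving pseudo-equilibria are compatible with those used on the crossing branch, that all local pieces match coherently across the two switching branches $\s_1,\s_2$ and the four regions $\U^\pm_\pm$, and that the family depends continuously on $\delta$ --- the analogue of the construction in \autoref{prop:C32}, now complicated by the presence of two transverse switching lines and two pseudo-equilibria. Once a coherent family $\{h_\delta\}$ together with $\mu$ is produced, $Z_\delta$ and $\tilde{Z}_\alpha$ are locally weak equivalent; as $Z_\delta$ was an arbitrary versal unfolding, $\tilde{Z}_\alpha$ is versal and the proposition follows. I would write out only the cases $c_1=c_2=1$ and $c_1=1,\ c_2=-1$, the remaining two being obtained by the reflection through the $x_2$-axis already used to reduce to $X_i(\0)>0$.
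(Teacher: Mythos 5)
Your proposal is correct and follows essentially the same route as the paper: verify that $\tilde{Z}_\alpha$ carries the invariants $(a,b,c_1,c_2)$ of $Z$ (your computation of $\det\tilde{Z}_\alpha$ checks out, giving $\det\tilde{Z}_\alpha(\0)=\alpha$ and $\partial_{x_j}\det\tilde{Z}_\alpha(\0)=c_i$), use the implicit map $g$ of \ref{func:g} and the sign formulas for $(Z_\delta)_i^s$ to track the branch and stability of the pseudo-equilibria $P_i(\delta)$ across $\delta=0$, and invoke the equivalence constructions of Section \ref{sec:stability} fiberwise. You are in fact more explicit than the paper (which compresses the fiberwise equivalences and the parameter change into the discussion preceding the statement and a one-line remark after it), and your flagged concern about gluing the local homeomorphisms coherently in $\delta$ is a genuine point that the paper likewise leaves implicit.
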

		
		It is easy to see that the family $\tilde {Z}_\delta$ satisfies $\sgn{\tilde{X}_1 (\0)}=a,$ $\sgn{\tilde X_2(\0)}=b$, $\sgn{\frac{\partial}{\partial x_j} \det{\tilde Z(\0)}}=c_i$ with $i,j=1,2$ and $i \neq j.$ Therefore, the vector field $Z$ and $\tilde Z_0$ are locally weak equivalent. 
		
	\end{subsection}
	
	\begin{subsection}{The pseudo-Hopf bifurcation} \label{pshopf}
		
		In this section we are going to study what happens near a piecewise smooth system $Z \in \Omega_1$ satisfying condition $B$ of theorem \ref{theo:B}. 
		
		As $X_1 \cdot X_2 (\0)<0$, the vector field $Z$ is transient and we can consider the first return map $\phi_Z$, see \ref{eq:poincare}. In \autoref{prop:poinctrans} we have computed the first term of the Taylor expansion of $\phi_Z$ near the origin. Even if one can compute the higher orders terms for $\phi_Z$, they have a cumbersome expression and this computation will be omitted. 
		However, suppose that \begin{eqnarray*}
			\phi_X(x)&=&a_X x + b_X x^2 + c_X x^3 + \mathcal{O}(x^4) \\
			\phi_Y(x)&=&a_Y x + b_Y x^2 + c_Y x^3 + \mathcal{O}(x^4).
		\end{eqnarray*}
		In fact, in \autoref{prop:poinctrans} we have seen that $a_X = - \frac{X_1}{X_2}(\0)$ and $a_Y = - \frac{Y_2}{Y_1}(\0)$. Therefore, we obtain the following expression for $\phi_Z$ 
		\begin{equation} \label{eq:hopoinc}
			\phi_Z(x)=\alpha_Z^2 x + (\alpha_Z + \alpha_Z^2)\beta_Zx^2+\eta_Z x^3 + \mathcal{O}(x^4), \, x \in \s_2^-
		\end{equation}
		\noindent 
		where $\alpha_Z= \frac{X_1 Y_2}{X_2 Y_1}(\0)$, as given in  \autoref{prop:poinctrans} and we are assuming that $\alpha_Z=-1$. In this case, the coefficients $\beta_Z$ and $\eta_Z$ are given by $\beta_Z=b_X \cdot a_Y^2+a_X \cdot b_Y$ and $$\eta_Z=- 2 \left( \left( b_X \cdot a_Y \right)^2 + c_X \cdot a_Y^3 + \left( \frac{b_Y}{a_Y} \right)^2 + \left( \frac{c_Y}{a_Y} \right)^2 \right).$$
		
		It is clear that as $\alpha_Z =-1$ the origin is an unstable fixed point for $\phi_Z$ if $\eta_Z>0$ and it is stable if $\eta_Z<0.$
		
		\begin{defi} Let $\Xi_1^2$ be the set  $Z \in \Omega$ satisfying condition $B$ of \autoref{theo:B}. 
		\end{defi}
		
		\begin{prop} \label{prop:B2}
			The set $\Xi_1^2 \subset \Omega_1$ is an embedded codimension-one submanifold of $\Omega$. Consequently, it is an open set in $\Omega_1$.
		\end{prop}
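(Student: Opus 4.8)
The plan is to mimic the proof of \autoref{prop:B1}, replacing the pseudo-equilibrium functional by the single scalar functional that measures the failure of hyperbolicity of the first return map. Fix $Z_0=(X^0,Y^0)\in\Xi_1^2$. The defining requirements of condition $B$ split into an \emph{open} part and one \emph{codimension-one} equation. The open part consists of the sign conditions $X_i\cdot Y_i(\0)>0$, $X_j\cdot Y_j(\0)<0$ (with $i\neq j$), $X_1\cdot X_2(\0)<0$, together with $\beta_{Z_0}\neq 0$ and $\eta_{Z_0}\neq 0$; each of these depends continuously on a finite jet of $(X,Y)$ at the origin, so they persist on some open neighborhood $\mathcal{V}_0\subset\Omega$ of $Z_0$, and there the signs of $X_i\cdot Y_i(\0)$, $X_1\cdot X_2(\0)$, $\beta_Z$ and $\eta_Z$ remain constant. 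On $\mathcal{V}_0$ every $Z$ is transient and transverse to $\s$ at $\0$, so by \autoref{prop:poinctrans} the first return map $\phi_Z$ of \ref{eq:hopoinc} is defined and its linear coefficient is $\alpha_Z^2$ with $\alpha_Z=\frac{X_1 Y_2}{X_2 Y_1}(\0)<0$.

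The codimension-one equation is $\alpha_Z=-1$, which (since $X_2\cdot Y_1(\0)\neq 0$) is equivalent to the vanishing of
\begin{equation*}
\Phi(Z)=\gamma_Z=\bigl(X_1\cdot Y_2+X_2\cdot Y_1\bigr)(\0).
\end{equation*}
The map $\Phi:\mathcal{V}_0\to\R$ is the composition of the bounded jet-evaluation $Z\mapsto\bigl(X_1(\0),X_2(\0),Y_1(\0),Y_2(\0)\bigr)$ with a bilinear polynomial, hence it is Frechet differentiable. First I would verify that $\Xi_1^2\cap\mathcal{V}_0=\Phi^{-1}(0)$. On $\mathcal{V}_0$ the sign conditions of case \ref{itm:C3} hold and $Z$ is transient, so by \autoref{theo:A} such a $Z$ lies in $\Omega_0$ exactly when $\alpha_Z\neq-1$; thus $Z\in\Omega_1\cap\mathcal{V}_0$ iff $\gamma_Z=0$, and since $\beta_Z,\eta_Z\neq 0$ are already enforced on $\mathcal{V}_0$, such a $Z$ satisfies condition $B$, i.e. $Z\in\Xi_1^2$. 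This yields simultaneously $\Omega_1\cap\mathcal{V}_0=\Phi^{-1}(0)=\Xi_1^2\cap\mathcal{V}_0$, which is precisely the assertion that $\Xi_1^2$ is open in $\Omega_1$ for the induced topology.

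It then remains to show that $0$ is a regular value of $\Phi$, i.e. $D\Phi(Z_0)\neq 0$. Differentiating along a direction $(U,V)\in\Omega$ gives
\begin{equation*}
D\Phi(Z_0)(U,V)=\bigl(U_1 Y_2^0+X_1^0 V_2+U_2 Y_1^0+X_2^0 V_1\bigr)(\0).
\end{equation*}
By transversality all of $X_1^0(\0),X_2^0(\0),Y_1^0(\0),Y_2^0(\0)$ are nonzero, so this continuous linear functional is nonzero and surjective onto $\R$ (for instance, take $V=0$ and $U$ with $U_1(\0)\neq 0$, producing the term $U_1(\0)\,Y_2^0(\0)\neq 0$). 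Its kernel is therefore a closed hyperplane, which is automatically complemented in $\Omega$, so $\Phi$ is a submersion at $Z_0$; the regular value theorem in Banach spaces shows that $\Phi^{-1}(0)=\Xi_1^2\cap\mathcal{V}_0$ is an embedded submanifold of codimension one. As $Z_0\in\Xi_1^2$ was arbitrary, $\Xi_1^2$ is an embedded codimension-one submanifold of $\Omega$.

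I expect the only genuinely delicate point to be the identification $\Omega_1\cap\mathcal{V}_0=\Phi^{-1}(0)$, which is where \autoref{theo:A} is invoked to rule out every other mechanism of structural instability inside $\mathcal{V}_0$; the smoothness of $\Phi$ and the regular-value computation are routine. Note that $\beta_Z$ and $\eta_Z$ do not enter the equation cutting out the manifold: at $\alpha_Z=-1$ one has $\alpha_Z+\alpha_Z^2=0$, so the quadratic term of \ref{eq:hopoinc} drops out and only $\gamma_Z=0$ is needed for non-hyperbolicity. The open conditions $\beta_Z\neq0$, $\eta_Z\neq0$ serve to confine $\mathcal{V}_0$ to the stratum where the single degeneracy is exactly $\alpha_Z=-1$, guaranteeing that the associated unfolding studied next is the genuine codimension-one normal form.
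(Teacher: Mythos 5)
Your proof is correct, but it takes a genuinely different route from the paper's. The paper never works with $\gamma_Z$ directly as the defining equation: it applies the Implicit Function Theorem to $G=\frac{\partial}{\partial x}F$, where $F(Z,x)=\phi_Z(x)/x$, using $G(Z_0,0)=(\alpha_{Z_0}+\alpha_{Z_0}^2)\beta_{Z_0}=0$ and $\frac{\partial}{\partial x}G(Z_0,0)=2\eta_{Z_0}\neq 0$ to produce a Frechet differentiable map $g(Z)$ locating the critical point of $F_\delta$, proves $g^{-1}(0)=\Xi_1^2\cap\mathcal{W}_0$ via the expansion $\alpha_Z+\alpha_Z^2=\frac{-2\eta_Z}{\beta_Z}g(Z)+\mathcal{O}(g(Z)^2)$, and checks $Dg_{Z_0}=-\frac{\beta_{Z_0}}{2\eta_{Z_0}}(D\alpha_Z)_{Z_0}\neq 0$. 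You instead cut out the manifold with the explicit functional $\Phi(Z)=\gamma_Z=(X_1\cdot Y_2+X_2\cdot Y_1)(\0)$ of \ref{form:gamma}, which depends only on the $0$-jet of $(X,Y)$ at the origin; this makes the Frechet differentiability and the surjectivity of $D\Phi(Z_0)$ completely elementary (your displayed derivative and the choice $U=(1,0)$, $V=0$ are correct, and the kernel of a nonzero continuous functional is indeed a complemented closed hyperplane), and it sidesteps the implicit-function argument entirely. Note that the paper's $Dg_{Z_0}\neq 0$ ultimately reduces to $(D\alpha_Z)_{Z_0}\neq 0$, which is essentially your computation in disguise, so the two defining functions cut out the same germ of hypersurface. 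What the paper's heavier construction buys is the auxiliary map $g(Z_\delta)$, which is reused immediately afterwards to locate the critical point of $F_\delta$ and hence the extra fixed points $p_\delta^{\pm}$ of $\phi_\delta$ in the unfolding analysis of the pseudo-Hopf bifurcation; your $\Phi$ does not provide that, so if you adopt your proof you would still need the paper's $g$ (or an equivalent) for \autoref{C31unfolding}. A minor plus of your write-up is that you explicitly verify $\Omega_1\cap\mathcal{V}_0=\Xi_1^2\cap\mathcal{V}_0$ via \autoref{theo:A}, which is what actually justifies the openness of $\Xi_1^2$ in $\Omega_1$; the paper leaves that identification implicit.
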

		
		\begin{proof} Given $Z_0 \in \Xi_1^2$, our aim is to find a map $g: \mathcal{W}_0 \rightarrow \R$ for which $0$ is a regular value, $\mathcal{W}_0$ is a neighborhood of $Z_0$ and $ \Xi_1^2 \cap \mathcal{W}_0 = g^{-1}(0)$. 
			
			Since conditions $\beta_Z \neq 0$ and $\eta_Z \neq 0$ are open, one can find a neighborhood $\mathcal{V}_0 \subset \Omega$ of $Z_0$ such that these conditions hold, moreover, the signs of $\beta_Z$ and $\eta_Z$ are constant in $\mathcal{V}_0$.
			
			Observe that the origin is a fixed point for $\phi_Z$ for all $Z \in \mathcal{V}_0$. In this neighborhood $\phi_Z$ is written as \begin{equation*}
				\phi_Z(x) = x \left( \alpha_Z^2 + \left( \alpha_Z + \alpha_Z^2 \right) \beta_Z x + \eta_Z x^2 + \mathcal{O} \left( x^3 \right) \right)
			\end{equation*}
			
			Consider the following Frechet differentiable map $$\begin{array}{llll}
			F: 	& \mathcal{V}_0 \times  \mathcal{D}_0  & \rightarrow & \R \\
			& (Z,x) & \mapsto & \alpha_Z^2 + (\alpha_Z + \alpha_Z^2)\beta_Z x + \eta_Z x^2 + \mathcal{O}(x^3)
			\end{array}
			$$
			where $\mathcal{D}_0 \subset \s_2$ is a neighborhood of the origin.
			
			Let $G = \frac{\partial}{\partial x} F$. Since $Z_0$ belongs to $ \Xi_1^2$, we have $G(Z_0,0)=(\alpha_{Z_0} + \alpha_{Z_0}^2)\beta_{Z_0} =0$ and $\frac{\partial}{\partial x}G(Z_0,0)=2\eta_{Z_0} \neq 0$. Then by the Implicit Theorem Function there exists a Frechet differentiable map $$g: Z \in \mathcal{W}_0 \subset \mathcal{V}_0 \mapsto g(Z) \in \U_0 \subset \mathcal{D}_0$$ such that $G(Z,g(Z))=0$ for all $Z \in \mathcal{W}_0$, and $\mathcal{W}_0$ and $\U_0$ are neighborhood of $Z_0$ and $0 \in \s_2$, respectively.
			
			Then for all $Z \in \mathcal{W}_0$ we have \begin{equation} \label{eq:azexp}
				\alpha_Z + \alpha_Z^2 =\frac{-2\eta_Z}{\beta_Z}g(Z) + \mathcal{O}(g(Z)^2)
			\end{equation} 
			
			Notice that if $Z \in  \Xi_1^2 \cap \mathcal{W}_0$ then $\alpha_Z=-1$ and then $G(Z,0)=0$ thus $g(Z)=0$. On the other hand, if $g(Z)=0$, by \autoref{eq:azexp} we have that $\alpha_Z + \alpha_Z^2=0$ therefore $\alpha_Z=-1$. Thus $Z \in  \Xi_1^2$ implying that $g^{-1}(0) =  \Xi_1^2 \cap \mathcal{W}_0.$
			
			To finish the proof we need to show that $Dg_{Z_0}$ is different from zero. In fact, by the Chain rule we obtain: \begin{equation*} 
				Dg_{Z_0} =  \frac{DG_{(Z_0,0)}}{2\eta_{Z_0}} = -\frac{\beta_{Z_0}}{2\eta_{Z_0}} \cdot (D\alpha_Z)_{Z_0} \neq  0.
			\end{equation*}
		\end{proof}

		Let $Z \in  \Xi_1^2$ and consider $\gamma(\delta) = Z_\delta$ a versal unfolding of $Z$. Suppose that $\gamma$ is transverse to $ \Xi_1^2$ at $\gamma_0=Z$ and that $Z_\delta \in \mathcal{W}_0^+$ if $\delta>0$ and  $Z_\delta \in \mathcal{W}_0^-$ if $\delta<0$, where $\mathcal{W}_0^\pm = g^{-1}(\R^\pm)$.
		
		For each $\delta \in (-\delta_0,\delta_0)$ we associate the first return map $$\phi_\delta(x)=\alpha_\delta^2 x +(\alpha_\delta+\alpha_\delta^2)\beta_\delta x^2 +\eta_\delta x^3 +\mathcal{O}(x)^4.$$ 
		

		Observe that $x \neq 0$ is a fixed point for $\phi_\delta$ if, and only if, $F(Z_\delta,x)=1$. The point $g(Z_\delta) \in \s_2^-$ given by \autoref{prop:B2} is a critical point of $F_\delta(x)=F(Z_\delta,x)-1$ since $$(F_\delta)'(g(Z_\delta))=\frac{\partial}{\partial x} F(Z_\delta,g(Z_\delta))=G(Z_\delta,g(Z_\delta))=0.$$ Therefore $g(Z_\delta)$ is a local maximum or minimum of $F_\delta$ depending on $\sgn{ \eta_\delta}= \sgn{ \eta_Z}$ if $\delta$ is small enough.
		
		\begin{figure}[!htbp]
			\centering
			\begin{tiny}
				\def\svgscale{0.25}
				\input{./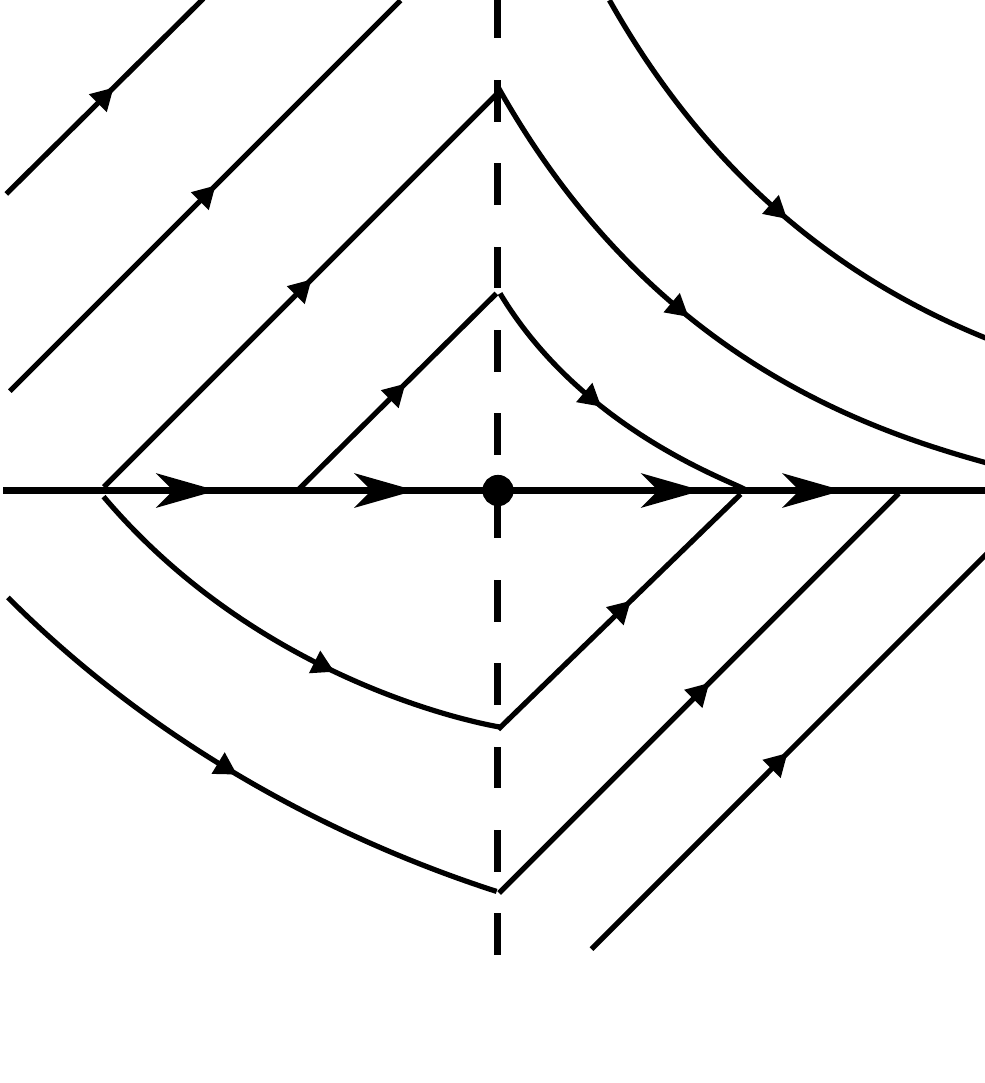_tex}
				\hspace{1cm}
				\def\svgscale{0.25}
				\input{./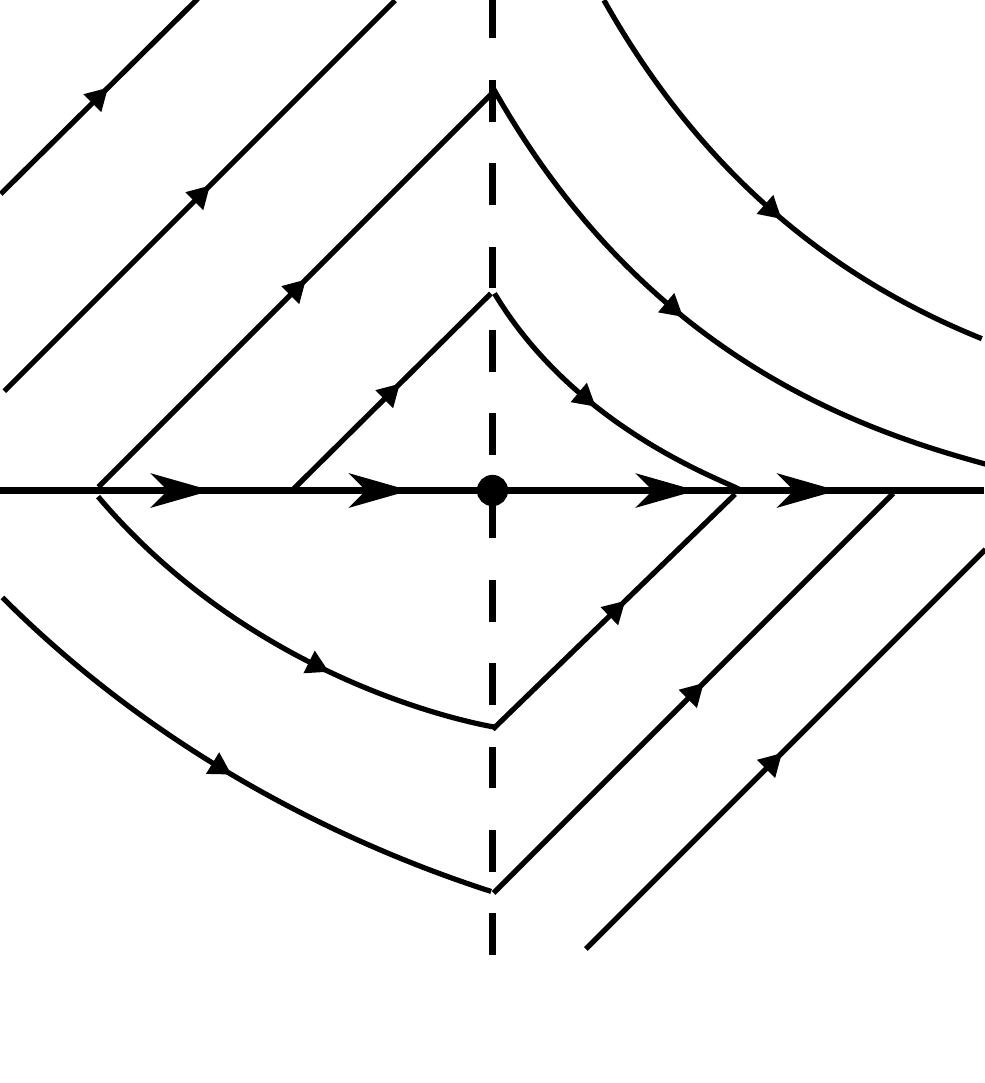_tex}
				\hspace{1cm}
				\def\svgscale{0.25}
				\input{./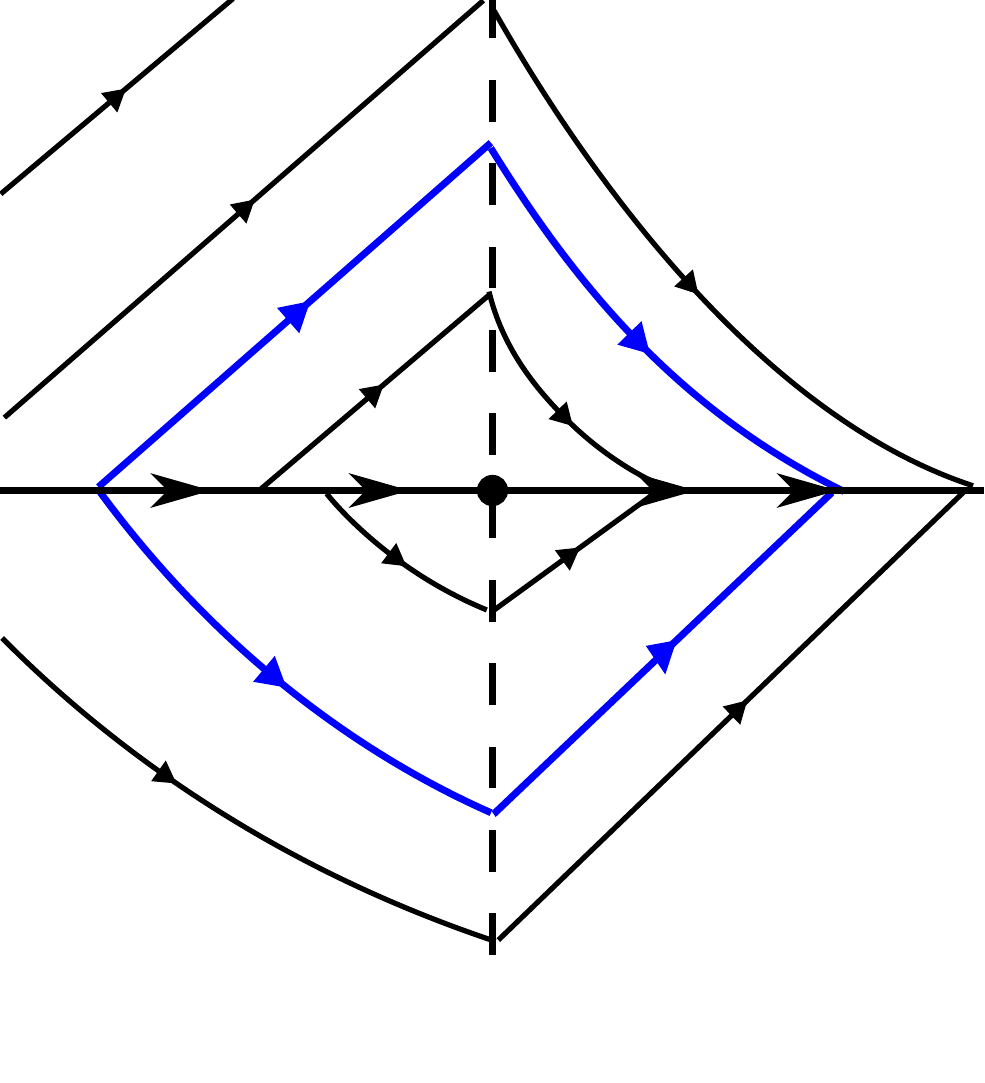_tex}
			\end{tiny}
			\caption{The pseudo-Hopf bifurcation for $Z$ satisfying $\eta_Z>0$.}
			\label{fig:PH}
		\end{figure}
		
		To simplify the analysis, suppose that $X_1(\0)>0$. Fix $\eta_Z$ and $\beta_Z>0$. Therefore, the origin is a repelling fixed point of $\phi_Z$ and the point $g(Z_\delta)$ is a minimum of $F_\delta(x)$.
		
		\begin{rem} The case $\beta_Z<0$ is just a reparametrization of $\gamma$ making $\delta \mapsto -\delta$.
		\end{rem}

		If $\delta>0$, by \autoref{eq:azexp} we have $\alpha_\delta + \alpha_\delta^2 <0$, thus $-1<\alpha_\delta<0$ therefore the origin is an stable fixed point of $\phi_\delta$.

		Moreover using \autoref{eq:azexp}, one can see that  $F_\delta(g(Z_\delta))=\alpha_\delta^2- \eta_\delta g(Z_\delta)^2 + \mathcal{O}(g(Z_\delta)^3)<0$ and therefore there exist unique points $p^-_\delta<0<p^+_\delta$ satisfying $F_\delta(p_\delta^\pm) =0$ which are unstable fixed points of $\phi_\delta$ since $(\phi_\delta)'(p_\delta^\pm)>1.$ Associated to this fixed point there is a pseudo-cycle of $Z_\delta$.

		When $\delta<0$ we obtain  $\alpha_\delta + \alpha_\delta^2 > 0$ then the origin is an unstable fixed point of $\phi_\delta$ since $\alpha_\delta < -1$. In this case we have $F_\delta(g(Z_\delta))>0$ and therefore there are no fixed points of $\phi_\delta$ around the origin.
		
		The same reasoning can be done when $\eta_Z<0$, in this case $g(Z_\delta)$ is a maximum. It follows that for $\delta>0$ the origin becomes unstable. Moreover, $F(g(Z_\delta))>0$ and being a maximum, two stable fixed appears, given rise to a pseudo-cycle of $Z_\delta$. For $\delta<0$ the origin remains stable and there are no pseudo-cycle around the origin.
		
		\begin{prop} \label{C31unfolding} 
			Let $Z \in \Xi_2^1$. Then any versal unfolding of $Z$ is locally weak equivalent to the one parameter family 
			\begin{equation}
				\tilde{Z}_\delta = \begin{cases}	
					\tilde{X}_\delta (x_1, x_2) =  \left( \begin{array}{c} ac \\ -(ac + \delta) \end{array} \right), & x_1 \cdot x_2>0 \\
					\tilde{Y} (x_1,x_2) = \left( \begin{array}{c} a \\ a+ x + ab x_1^2 \end{array} \right), & x_1 \cdot x_2<0
				\end{cases}
			\end{equation}
			where $a=\sgn{Y_1(\0)}$, $b=\sgn{\eta_Z}$, $c=\sgn{X_1 \cdot Y_1 (\0)}$.
		\end{prop}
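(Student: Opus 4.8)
The plan is to mirror the scheme of \autoref{C21unfolding}: I would first certify that the family $\tilde Z_\delta$ is itself an admissible unfolding of a pseudo-Hopf singularity carrying the same invariants as $Z$, and then produce an orientation-preserving homeomorphic reparametrization $\mu(\delta)$ together with a local $\s-$equivalence between $Z_\delta$ and $\tilde Z_{\mu(\delta)}$ for every $\delta$. To set up, I fix $a=b=c=1$ (the other sign choices being reflections across the axes, as in \autoref{prop:C32}) and check directly that $\tilde Z_0 \in \Xi_1^2$: a short computation gives $X_1\cdot X_2(\0)<0$, $\alpha_{\tilde Z_0}=-1$, and $\sgn{\eta_{\tilde Z_0}}=b$, so $\tilde Z_0$ lies on the bifurcation submanifold with the prescribed orientation and shares the invariants $a,b,c$ with $Z$. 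At the critical value $\delta=0$ both $Z$ and $\tilde Z_0$ have the origin as a non-hyperbolic fixed point of the return map with $\alpha=-1$ and the same one-sided stability (determined by $\sgn{\eta}=b$); hence the construction of \autoref{prop:C32}, with the Grobman--Hartman step replaced by a direct conjugacy of the two monotone return maps at the origin, furnishes a local $\s-$equivalence between $Z$ and $\tilde Z_0$.

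Next I would match the bifurcation diagrams of the two families. Using \autoref{prop:poinctrans} and the expansion \autoref{eq:hopoinc} I compute the first return map $\tilde\phi_\delta$ of $\tilde Z_\delta$ and read off $\alpha_{\tilde Z_\delta}+\alpha_{\tilde Z_\delta}^2$ as a function of $\delta$. The discussion preceding the statement, via \autoref{eq:azexp}, describes the same behavior for $Z_\delta$: as $\delta$ crosses $0$ the origin changes stability, and on exactly one side there appear two hyperbolic fixed points $p_\delta^-<0<p_\delta^+$ of $\phi_\delta$ which are the corners of a pseudo-cycle of $Z_\delta$. I would verify that $\tilde\phi_\delta$ reproduces the stability flip of the origin and the pair of fixed points on the corresponding side, so that, after possibly composing with $\delta\mapsto-\delta$ (allowed by the remark on the sign of $\beta_Z$), the signs agree. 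This fixes the homeomorphic change of parameters $\mu$.

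With $\mu$ in hand I build the fiberwise $\s-$equivalence as in \autoref{prop:C32}. For a fixed $\delta$, both $\phi_\delta$ and $\tilde\phi_{\mu(\delta)}$ are increasing one-dimensional diffeomorphisms of $\s_2^-$ fixing the origin and, on the pseudo-cycle side, also fixing a pair of points with matching stability; since two increasing interval maps with the same fixed-point and stability pattern are topologically conjugate, there is a homeomorphism $h^*_\delta$ of $\s_2^-$ conjugating $\phi_\delta$ to $\tilde\phi_{\mu(\delta)}$ and carrying $p_\delta^\pm$ to $\tilde p_{\mu(\delta)}^\pm$ (for $\delta=0$ one takes the conjugacy of \autoref{prop:C32}). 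I then lift $h^*_\delta$ to a full neighborhood of the origin by transporting it along the transient flow through the trajectory curves $\eta^\pm_q$ and the time reparametrizations $\sigma^\pm$, together with the $\mathcal{C}^0-$equivalence of the sliding fields $(Z_\delta)_2^s$ and $(\tilde Z_{\mu(\delta)})_2^s$ supplied by \autoref{prop:sliding} and \autoref{prop:regcos}; this produces the required $h_\delta$.

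The main obstacle is that the decisive local object here is nonlocal in nature: the pseudo-cycle is a closed curve running through all four regions $\U^+_+,\U^+_-,\U^-_+,\U^-_-$, so the equivalence must be built on a neighborhood whose size does not shrink with $\delta$ and must send the pseudo-cycle of $Z_\delta$ onto that of $\tilde Z_{\mu(\delta)}$. The lifting through the transient flow resolves this automatically, since once $h^*_\delta$ matches the first-return dynamics on $\s_2^-$, including the corners $p_\delta^\pm$, the transported map carries each orbit segment $\eta^\pm_q$ to its counterpart and thus glues the pieces of the pseudo-cycle consistently; the remaining care is that the degenerate fiber $\delta=0$ be compatible with the hyperbolic ones, which follows from the continuity of $p_\delta^\pm$ and of the conjugacies as $\delta\to0$. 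Since local weak equivalence requires only the homeomorphic $\mu$ together with a local $\s-$equivalence on each fiber, assembling these fiberwise maps completes the argument.
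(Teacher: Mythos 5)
Your proposal is correct and follows essentially the same route as the paper, whose proof is a single sentence observing that the normal form family satisfies $\alpha_{\tilde Z_0}=-1$, $\beta_{\tilde Z_0}\neq 0$ and $\sgn{\eta_{\tilde Z_0}}=\sgn{\eta_Z}$, so that the preceding analysis of the bifurcation diagram applies verbatim to both unfoldings. You simply make explicit the steps the paper leaves implicit: the matching of the fixed-point patterns of the return maps, the choice of the parameter change $\mu$, and the fiberwise lifting of the conjugacy through the transient flow as in \autoref{prop:C32}.
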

		
		\begin{proof}
			It is enough to observe that $\alpha_{Z_{\delta}} =-1$, $\beta_{Z_{\delta}} \neq 0$ and $\sgn{\eta_{Z_{\delta}}} =\sgn{\eta_{Z}}.$
		\end{proof}
		
	\end{subsection}
	
	\begin{subsection}{Regular-fold bifurcation}
		
		In this section we are going to study what happens near a piecewise smooth system $Z \in \Omega_1$ satisfying condition $C$ of theorem \ref{theo:B}.
		
		In this case, we allow the trajectories of $X$ or $Y$ being tangent to $\s$ at the origin. If  the trajectory through the origin of $X$, is tangent to both $\s_i$ for $i=1,2$ then $X$ has a singularity at the origin and this situation leads to a higher codimension bifurcation. Another situation which raises a higher codimension bifurcation is when the trajectories through zero of both vector fields $X$ and $Y$ are tangent to $\s.$
		
		In this section we will see that $Z$ having a regular fold is a codimension one bifurcation. 
		
		\begin{defi} \label{def:regfold1}
			We call $\Xi_1^3$ the set of all elements of $\Omega_1$  having a regular fold at the origin.
		\end{defi} 
		
		Without loss of generality, we can fix $X_2(\0)=0$, $X_1(\0)>0$ and $\frac{\partial}{\partial x_1}X_2(\0)>0$. Under these conditions we have four cases to analyze, which are not equivalent, depending on sign of $Y_1(\0)$ and $Y_2 (\0)$. The other ones can be obtained by reversing time and combinations of reflections and rotations of the previous ones. 
		
		\begin{rem} When $Z \in \Xi_1^3$, provided $Z^s_i$ is defined in $\overline{\s_i}$ for $i=1,2$, we have $\displaystyle Z_1^s(0)=\frac{(X_1\cdot Y_2)(\0)}{(X_1-Y_1)(\0)}$ and $Z_2^s(0)=X_1(\0)$, therefore, the origin is a regular point for the sliding vector fields $Z^s_i$.
		\end{rem}
		

		\begin{prop} \label{prop:B3}
			The set $\Xi_1^3$ is an embedded codimension-one submanifold of $\Omega$ and an open set of $\Omega_1.$ 
		\end{prop}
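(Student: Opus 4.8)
The plan is to mimic the strategy used for \autoref{prop:B1} and \autoref{prop:B2}: exhibit $\Xi_1^3$ locally as the zero set of a submersion having $0$ as a regular value, and simultaneously identify this zero set with $\Omega_1\cap\mathcal{W}_0$ in order to read off openness in $\Omega_1$. Following the reduction already fixed before the statement, I would work with the representative configuration $X_2(\0)=0$, $X_1(\0)>0$, $\frac{\partial}{\partial x_1}X_2(\0)>0$ and $Y_1(\0),Y_2(\0)\neq 0$, the remaining cases being obtained by the indicated reflections, rotations and time reversal. Each of the inequalities $X_1(\0)\neq 0$, $\frac{\partial}{\partial x_1}X_2(\0)\neq 0$, $Y_1(\0)\neq 0$, $Y_2(\0)\neq 0$ is an open condition, so there is a neighborhood $\mathcal{V}_0\subset\Omega$ of $Z_0$ on which all of them hold with constant sign.

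The defining condition for a regular fold located at the origin is the single scalar equation $X_2(\0)=0$; unlike the two previous propositions there is no hidden pseudo-equilibrium whose position must be located, so no Implicit Function Theorem is needed. I would therefore take $g:\mathcal{V}_0\to\R$, $g(Z)=X_2(\0)$, which is the restriction of a bounded linear evaluation functional on the Banach space $\Omega=\mathfrak{X}\times\mathfrak{X}$, hence Frechet differentiable with $Dg_{Z_0}=g$. This functional is surjective onto $\R$, so $Dg_{Z_0}\neq 0$, $0$ is a regular value, and $g^{-1}(0)$ is an embedded codimension-one submanifold of $\mathcal{V}_0$. The inclusion $g^{-1}(0)\subset\Xi_1^3$ is immediate: if $X_2(\0)=0$ while the open conditions hold, then by \autoref{def:regfold} the origin is a fold point of $X$ in $\s_2$ with $Y$ transverse to $\s$, i.e. a regular fold, and since transversality of $Z$ at the origin fails, such a $Z$ lies in $\Omega_1$.

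The crux is the reverse inclusion $\Omega_1\cap\mathcal{W}_0\subset g^{-1}(0)$ on a possibly smaller neighborhood $\mathcal{W}_0\subset\mathcal{V}_0$, which is precisely what yields openness in $\Omega_1$. For this I must show that every $Z\in\mathcal{W}_0$ with $X_2(\0)\neq 0$ already lies in $\Omega_0$. The key observation is that at the tangency one has $\det{Z(\0)}=X_1(\0)Y_2(\0)-X_2(\0)Y_1(\0)=X_1(\0)Y_2(\0)\neq 0$, so $\det{Z(\0)}$ stays bounded away from $0$ on a small $\mathcal{W}_0$; this excludes the degeneracy \ref{itm:ii} of \autoref{theo:A} in configuration \ref{itm:C2}. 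Likewise, from $\alpha_Z=\frac{X_1\cdot Y_2(\0)}{X_2\cdot Y_1(\0)}$ of \autoref{prop:poinctrans} one sees that $|\alpha_Z|\to\infty$ as $X_2(\0)\to 0$, so in the only configuration where it is relevant (the transient case \ref{itm:C3}) we have $\alpha_Z\neq -1$ on $\mathcal{W}_0$, excluding the degeneracy \ref{itm:iii}. Running through the four sign patterns of $(Y_1(\0),Y_2(\0))$ and matching each to the corresponding case \ref{itm:C1}--\ref{itm:C3}, I would conclude that transversality together with these robust nondegeneracies places every such $Z$ in $\Omega_0$ by \autoref{theo:A}. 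Hence $\Omega_1\cap\mathcal{W}_0=g^{-1}(0)=\Xi_1^3\cap\mathcal{W}_0$, which proves simultaneously that $\Xi_1^3$ is an embedded codimension-one submanifold of $\Omega$ and that it is open in $\Omega_1$.
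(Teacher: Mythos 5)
Your proof is correct, and for the submanifold half of the statement it is more economical than the paper's. The paper applies the Implicit Function Theorem to $\xi(Z,x)=X_2(x,0)$ to produce a Frechet differentiable map $g$ recording the \emph{position} of the fold of $X$ on $\s_2$ for each nearby $Z$, and then identifies $\Xi_1^3\cap\mathcal{W}_0$ with $g^{-1}(0)$; you instead observe that the defining equation is the single condition $X_2(\0)=0$ and take $g$ to be the bounded linear evaluation functional itself, so no IFT is needed and $Dg_{Z_0}=g$ is trivially surjective. Both choices have the same local zero set, so both yield the embedded codimension-one submanifold. On openness in $\Omega_1$ the paper is terse: it builds the needed nondegeneracies into the choice of $\mathcal{V}_0$ (constant, nonzero sign of $\det Z(p)$ and of $\gamma_Z$, which is legitimate because $\det Z_0(\0)=\gamma_{Z_0}=X_1(\0)\,Y_2(\0)\neq 0$ when $X_2(\0)=0$) and leaves to the reader the conclusion that every transversal $Z\in\mathcal{W}_0$ falls under \autoref{theo:A}; you make exactly this step explicit, running through configurations \ref{itm:C1}--\ref{itm:C3} and noting that $|\alpha_Z|\to\infty$ as $X_2(\0)\to 0$, which is equivalent to the paper's condition $\gamma_Z\neq 0$. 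The only thing the paper's IFT formulation buys that yours does not is the map $\delta\mapsto g(Z_\delta)$ locating the fold along an unfolding, which is reused in the subsequent analysis of the versal unfoldings (see \autoref{tab:decomposition}); as a proof of the proposition alone, your version is complete.
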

		
		\begin{proof}
			Let $Z_0=(X^0,Y^0) \in \Xi_1^3.$ In order to simplify the notation, we will make the proof for the case when the origin is a fold of $X^0$ at $\s_2$. Any other case is analogous.
			
			Let $\mathcal{V}_0 \times \mathcal{D}_0 \subset \Omega \times \U$ a connected neighborhood of $(Z_0, \0)$ for which all $Z \in \mathcal{V}_0$ satisfies:
			\begin{enumerate}[(a)]
				\item $\sgn{\det Z(p)} = \sgn{\det{Z_0(\0)}}$ for all $p \in \mathcal{D}_0 \cap \s$;
				\item $\sgn{\gamma_Z} = \sgn{\gamma_{Z_0}}$, where $\gamma_Z$ is defined in \ref{form:gamma};
				\item The sign of $X_1(x,0)$, $Y_i(x,0)$, $i=1,2$, and $\frac{\partial} {\partial x} X_2(x,0)$  is constant in $\mathcal{V}_0 \times \mathcal{D}_0$.
			\end{enumerate}
			
			Consider the Frechet differentiable application $$\begin{array}{llll} 
			\xi:& \mathcal{V}_0 \times (\mathcal{D}_0 \cap \s_2) &\rightarrow & \R \\ &(Z,x) &\mapsto & X_2(x,0)\end{array}$$
			
			We have $\xi(Z_0,\0)=0$, since the origin is a fold point to $X$ at $\s_2$ and $\frac{\partial}{\partial x} \xi (Z_0,\0) = \frac{\partial}{\partial x_1} X^0_2 (0,0) \neq 0$. Then by the Implicit Function Theorem we obtain open sets $\mathcal{W}_0 \subset \mathcal{V}_0$, $\U_0 \subset \mathcal{D}_0 \cap \s_2$ and a Frechet differentiable map $g: \mathcal{W}_0 \rightarrow \U_0$ such that $\xi(Z,x)=0$ if, and only if, $x=g(Z)$. The point $(g(Z),0)$ is the unique tangency point of $Z$ with $\s$ and it is a regular-fold of $X$ at $\s_2$.
			
			Therefore, $g^{-1}(0)=\mathcal{W}_0 \cap \Xi_1^3$. Using the Chain rule it is easy to see that $Dg_{Z_0}$ is a surjective linear functional. Therefore $\Xi_1^3$ is a codimension one embedded submanifold of $\Omega$ and also an open set of $\Omega_1$.
			
		\end{proof}
		
		\bigskip
		
		From now on we are going to present the generic unfoldings of $Z \in  \Xi_1^3$. Fixing $X_2(\0)=0$ and $X_1 \cdot \frac{\partial}{\partial x}X_2(\0)>0$, the other cases can be done identically.
		
		Let $\gamma(\delta)= Z_\delta$ a versal unfolding of $Z$. Let $\mathcal{W}_0^\pm = g^{-1}(\R^\pm)$ and without loss of generality, suppose that for $\delta>0$, $Z_{\delta} \in \mathcal{W}_0^+$, therefore, $X$ has a fold in $\s_2^+$ which is visible. Analogously, for if $\delta<0$, $Z_\delta \in \mathcal{W}_0^-$, therefore, $X$ has a fold in $\s_2^-$ which is invisible. Knowing the position of the fold for every $\delta$, it allows us to give the decomposition of $\s$. See \autoref{fig:F} and also \autoref{tab:decomposition}. \newline

		\begin{table}[htb!]
			\centering	
			\begin{tabular}{cc}
				\begin{tabular}{|c|c|c|}
					\hline
					\multicolumn{3}{|c|}{$Y_1 \cdot Y_2 (\0)>0$}  \\ \hline 
					\hline
					&$Y_1(\0)>0$ & $Y_1(\0)<0$ \\ \hline
					\begin{tabular}{c}
						$\delta>0$   \\ 
						$g(Z_\delta)>0$
					\end{tabular}
					& 
					\begin{tabular}{l}
						$\s_1=\s^c$ \\ $\s_2^-=\s^e$ \\ $\s_2^+=\s^c \cup \s^s$
					\end{tabular}		 
					& 
					\begin{tabular}{l} $\s_1^-=\s^s$ \\ $\s_1^+=\s^e$ \\ $\s_2^-=\s^c$ \\ $\s_2^+=\s^c \cup \s^e$
					\end{tabular}  
					\\ \hline
					\begin{tabular}{c}
						$\delta=0$   \\ 
						$g(Z_\delta)=0$
					\end{tabular}
					& 
					\begin{tabular}{l}
						$\s_1=\s^c$ \\ $\s_2^-=\s^e$ \\ $\s_2^+= \s^c$
					\end{tabular}		 
					& 
					\begin{tabular}{l} $\s_1^-=\s^s$ \\ $\s_1^+=\s^e$ \\ $\s_2^-=\s^c$ \\ $\s_2^+= \s^e$
					\end{tabular}  
					\\ \hline
					\begin{tabular}{l} $\delta<0$ \\ $g(Z_\delta)<0$ \end{tabular}  & 
					\begin{tabular}{l} $\s_1=\s^c$ \\ $\s_2^-=\s^e \cup \s^c$ \\ $\s_2^+=\s^c$ \end{tabular} & 
					\begin{tabular}{l} $\s_1^-=\s^s$ \\ $\s_1^+=\s^e$ \\ $\s_2^-=\s^c \cup \s^s$ \\ $\s_2^+=\s^e$ \end{tabular}
					\\ \hline
				\end{tabular}	
				\bigskip
				& 
				\begin{tabular}{|c|c|c|}
					\hline
					\multicolumn{3}{|c|}{$Y_1 \cdot Y_2 (\0)<0$}  \\ \hline 
					\hline
					&$Y_1(\0)>0$ & $Y_1(\0)<0$ \\ \hline
					\begin{tabular}{c}
						$\delta>0$   \\ 
						$g(Z_\delta)>0$
					\end{tabular}
					& 
					\begin{tabular}{l}
						$\s_1=\s^c$ \\ $\s_2^-=\s^c$ \\ $\s_2^+=\s^c \cup \s^e$
					\end{tabular}		 
					& 
					\begin{tabular}{l} $\s_1^-=\s^s$ \\ $\s_1^+=\s^e$ \\ $\s_2^-=\s^e$ \\ $\s_2^+=\s^s \cup \s^c$
					\end{tabular}  
					\\ \hline
					\begin{tabular}{c}
						$\delta=0$   \\ 
						$g(Z_\delta)=0$
					\end{tabular}
					& 
					\begin{tabular}{l}
						$\s_1^=\s^c$ \\ $\s_2^-=\s^c$ \\ $\s_2^+= \s^e$
					\end{tabular}		 
					& 
					\begin{tabular}{l} $\s_1^-=\s^s$ \\ $\s_1^+=\s^e$ \\ $\s_2^-=\s^e$ \\ $\s_2^+= \s^c$
					\end{tabular}  
					\\ \hline
					
					\begin{tabular}{l} $\delta<0$ \\ $g(Z_\delta)<0$ \end{tabular}  & 
					\begin{tabular}{l} $\s_1=\s^c$ \\ $\s_2^-=\s^c \cup \s^s$ \\ $\s_2^+=\s^e$ \end{tabular} & 
					\begin{tabular}{l} $\s_1^-=\s^s$ \\ $\s_1^+=\s^e$ \\ $\s_2^-=\s^e \cup \s^c$ \\ $\s_2^+=\s^c$ \end{tabular}
					\\ \hline
				\end{tabular}
			\end{tabular}
			\caption{Decomposition of $\s$ when $X_2(\0)=0$, $X_1(\0)\cdot \frac{\partial}{\partial x} X_2(\0)>0$, depending on signs of $Y_1(\0)$ and $Y_2(\0)$.}
			\label{tab:decomposition}
		\end{table}
		
		Since $\sgn{\det{Z_\delta(p)}}$ does not change for all $(Z,p) \in  \U \times (\U_0 \cap \s)$, whenever it is defined, $\sgn{(Z_\delta)_i^s(p)}=\sgn{Z_i^s(0)}>0$ for all $\delta \in (-\delta_0,\delta_0)$. \newline

		\begin{itemize}
			\item Suppose that $Y_1 \cdot Y_2 (\0)>0$, thus the $Y$ trajectory trough zero has positive slope.
			\begin{itemize}
				\item Let $Y_1(\0)>0$. \autoref{fig:F2} 
				\begin{itemize}
					\item 
					If $\delta<0$, $X_2 >0$ near the origin and therefore $Z_\delta$ satisfies hypothesis $A$ of theorem \ref{theo:A}. 
					\item When $\delta>0$, $X_2<0$ near the origin. In this case, the vector field $Z_\delta$ is transient  and therefore, it is necessary to analyze the stability of the origin for the first return map $\phi_\delta$. By \autoref{rem:hyppoinc}, the origin is a hyperbolic fixed point of $\phi_\delta$ if $$\gamma_\delta=\left( (X_\delta)_1 \cdot (Y_\delta)_2 + (X_\delta)_2 \cdot (Y_\delta)_1 \right) (\0) \neq 0,$$ 
					but $\gamma_\delta$ is positive for $\delta$ sufficiently small  because  for $\delta=0$ we have $\gamma_0=X_1\cdot Y_2(\0)>0$.
					Therefore, $Z_\delta$ satisfies the hypothesis $C$ of theorem \ref{theo:A} and therefore, it is locally $\s$-structurally stable. 
				\end{itemize}
				\item Let $Y_1(\0)<0,$ see \autoref{fig:F4}.
				\begin{itemize} 
					\item 
					If $\delta<0$, $X_2<0$ near the origin and therefore the origin satisfies condition $B$ of theorem \ref{theo:A}. 
					\item 
					The case $\delta>0$ is analogous to the case $Y_1(\0).$
				\end{itemize}
			\end{itemize}
			\item Suppose that $Y_1 \cdot Y_2 (\0)<0$, that is the $Y$ trajectory of zero has negative slope. We have different dynamics on the switching manifold depending on $\sgn{Y_1(\0)}.$ 
			\begin{itemize}
				\item Fix $Y_1 (\0)>0$, see figure \ref{fig:F1}.
				\begin{itemize}
					\item For $\delta<0$, $X_2>0$ near the origin and therefore, $Z_\delta$ satisfies the condition $C$ with $X_1 \cdot X_2 >0$, therefore, $Z_\delta$ it is also locally $\s$-structurally stable.
					\item If $\delta>0$, then $X_2<0$ near the origin and therefore, $Z_\delta$ satisfies hypothesis $A$ of theorem \ref{theo:A}. 
				\end{itemize}
				\item Fix $Y_1 (\0)<0$, see figure \ref{fig:F3}. 
				\begin{itemize}
					\item 
					If $\delta<0$, then $(X_\delta)_2>0$ near the origin and therefore $Z_\delta$ satisfies hypothesis $C$ of theorem \ref{theo:A} with $(X_\delta)_1 \cdot (X_\delta)_2 (\0)>0$, therefore $Z_\delta$ is locally $\s-$structurally stable. 
					\item
					If $\delta>0$ then $(X_\delta)_2<0$ near the origin and therefore, as $\det{Z_\delta}(\0) \neq 0$, $Z_\delta$ satisfies hypothesis $B$ of theorem \ref{theo:A}, thus $Z_\delta$ is locally $\s-$structurally stable. 
				\end{itemize} 
			\end{itemize}
		\end{itemize}
		
		The following proposition gives a normal form which satisfies all the previous conditions.
		
		\begin{prop} \label{C41unfolding}
			Let $Z \in \Xi_3^1$ and fix $X_1(\0)$ and $\frac{\partial}{\partial x} X_2 (\0)$ positive. Then any versal unfolding of $Z$ is locally weak equivalent to the one parameter family 
			\begin{equation}
				\tilde{Z}_\alpha = \begin{cases}	
					\tilde{X}_\alpha (x_1,x_2) = \left( \begin{array}{c} 1 \\ x_1 - \alpha \end{array} \right), & x_1 \cdot x_2>0 \\
					\tilde{Y} (x_1,x_2) = \left( \begin{array}{c} a \\ b \end{array} \right), & x_1 \cdot x_2<0
				\end{cases}
			\end{equation}
			where $a=\sgn{Y_1(\0)}$, $b=Y_2(\0)$. The other normal forms can be obtained from this proposition by reflections and rotations.
		\end{prop}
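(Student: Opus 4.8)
The plan is to exploit the codimension-one structure already obtained in \autoref{prop:B3}: the Frechet-differentiable map $g$ constructed there records the position $(g(Z),0)\in\s_2$ of the unique fold point, and its zero set is $\Xi_1^3\cap\mathcal{W}_0$. Since $\gamma(\delta)=Z_\delta$ is transverse to $\Xi_1^3$ at $Z$, the scalar map $\delta\mapsto g(Z_\delta)$ is a local homeomorphism fixing $0$; a direct computation shows that the fold of $\tilde{X}_\alpha$ on $\s_2$ sits at $x_1=\alpha$, so $g(\tilde{Z}_\alpha)=\alpha$ and $\alpha\mapsto g(\tilde{Z}_\alpha)$ is the identity. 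I would therefore define the change of parameters $\mu(\delta)=g(Z_\delta)$ (reversing $\delta\mapsto-\delta$ beforehand if needed to match orientations); this is the required homeomorphism, and it guarantees that $Z_\delta$ and $\tilde{Z}_{\mu(\delta)}$ have their fold on the same side of the origin and hence fall in the same regime of \autoref{tab:decomposition}.

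For $\delta\neq0$ both vector fields lie in $\Omega_0$, so it suffices to show that they realise the same structural-stability class of \autoref{theo:A}. The case analysis summarised in \autoref{tab:decomposition} already assigns, for each choice of $\sgn{Y_1(\0)}$, $\sgn{Y_2(\0)}$ and of $\sgn{\delta}$, one of the conditions $A$, $B$ or $C$ to $Z_\delta$. I would then carry out the parallel computation for the explicit family: evaluating $\det{\tilde{Z}_\alpha(\0)}$, the products $\tilde{X}_i\cdot\tilde{Y}_i(\0)$, and, in the transient sub-case, the ratio $\alpha_{\tilde{Z}_\alpha}$ defined by \eqref{alphaZ}, one checks that $\tilde{Z}_{\mu(\delta)}$ satisfies exactly the same condition with the same signs as $Z_\delta$. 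Applying the appropriate one among \autoref{prop:C1}, \autoref{prop:C2}, \autoref{prop:C31} and \autoref{prop:C32}, both $Z_\delta$ and $\tilde{Z}_{\mu(\delta)}$ are locally $\s$-equivalent to one and the same normal form, and transitivity of local $\s$-equivalence yields the fibrewise equivalence for every $\delta\neq0$.

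At the bifurcation value $\delta=0$ both $Z$ and $\tilde{Z}_0$ possess a regular fold of $X$ on $\s_2$ with the same orientation; indeed $\tilde{X}_2(\0)=0$, $\tilde{X}_1\cdot\frac{\partial}{\partial x_1}\tilde{X}_2(\0)=1>0$, and $\tilde{Y}=(a,b)$ is transverse with the prescribed signs, so $\tilde{Z}_0\in\Xi_1^3$. I would build the local $\s$-equivalence by the flow-box technique of \autoref{prop:C31}: straighten $X$ near its fold, use transversality of $Y$ to obtain a standard flow box on the opposite side, and on the escaping and sliding arcs invoke \autoref{prop:sliding} so that the one-dimensional fields $Z_2^s$ and $\tilde{Z}_2^s$ are conjugate by the identity; the pieces are then glued along the cross-sections $\Gamma_X,\Gamma_Y$ as before.

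The main obstacle is the $\delta=0$ gluing, together with the continuity in the parameter. Because the very decomposition of $\s$ into $\s^c$, $\s^s$ and $\s^e$ changes as the fold crosses the origin, I must check that the piecewise-defined homeomorphism matches continuously at $\0$ and across every boundary $\partial\s_i^{c}\cup\partial\s_i^{s}\cup\partial\s_i^{e}$, and that the tangency at the fold is carried to the corresponding one for $\tilde{Z}_0$. Equally, to upgrade the fibrewise equivalences into a genuine weak equivalence I must ensure that the homeomorphisms produced for $\delta\neq0$ and for $\delta=0$ can be chosen to depend continuously on $\delta$ through $\mu$. Once this compatibility is secured, the verification that $\tilde{Z}_0$ lies in $\Xi_1^3$ with the matching invariants completes the proof.
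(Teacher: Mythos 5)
Your proposal is correct and follows essentially the same route as the paper: the paper's own justification is precisely the preceding case analysis (summarised in \autoref{tab:decomposition}) showing which condition of \autoref{theo:A} each $Z_\delta$ satisfies for $\delta\neq 0$, together with the observation that the explicit family realises the same sign invariants, and you merely make explicit the change of parameters $\mu(\delta)=g(Z_\delta)$ and the $\delta=0$ gluing that the paper leaves implicit. One small remark: the continuity of the fibrewise homeomorphisms in $\delta$ that you list as an obstacle is not actually required by the paper's definition of weak equivalence, which only asks for a homeomorphic change of parameters and a local $\s$-equivalence for each fixed $\delta$.
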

		
		Joining all the results of the last three subsections we prove \autoref{theo:B}.

				\begin{figure}[t]
				\centering
				\begin{tiny}
				\subfigure[\label{fig:F2} $Y_1 \cdot Y_2 (\0)>0$ and $Y_1(\0)>0$]{
					\def\svgscale{0.25}
					\input{./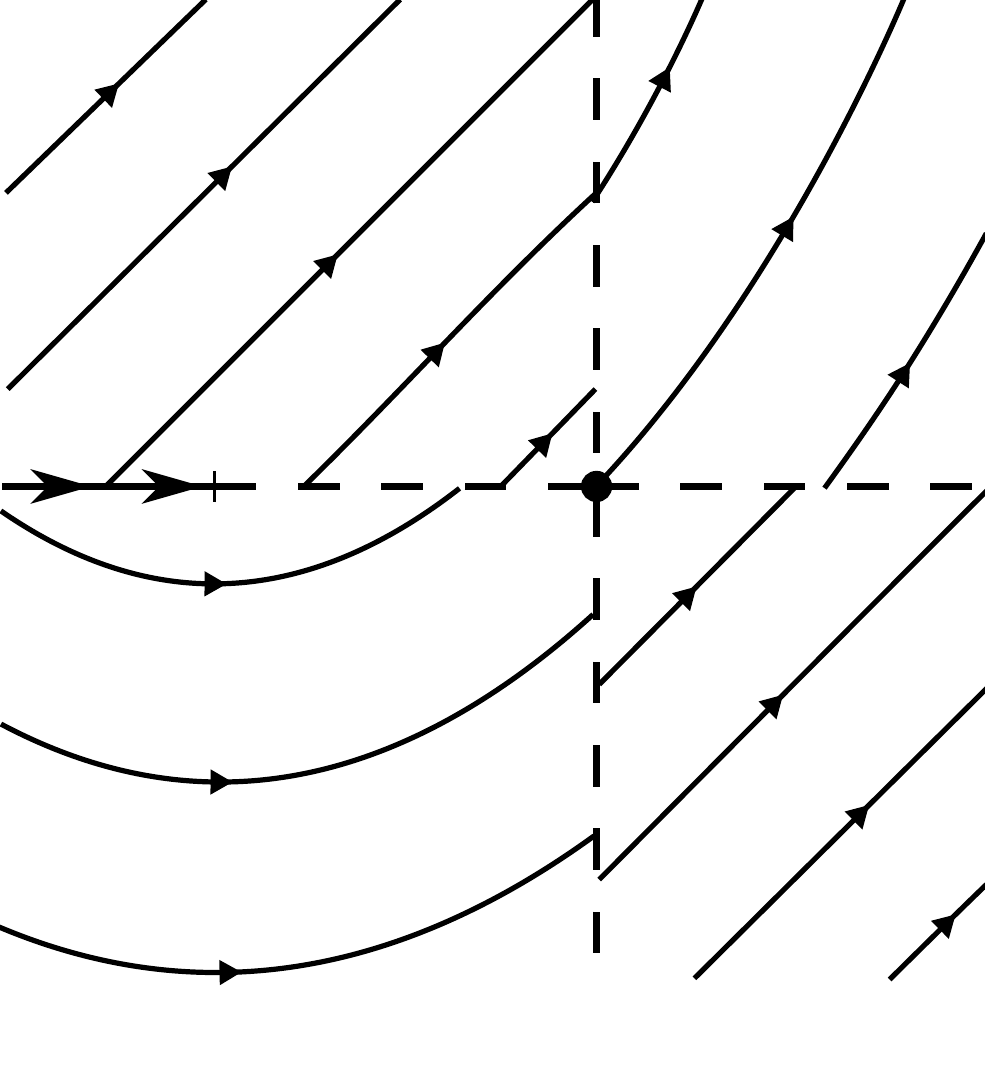_tex} \hspace{0.5cm}
					\def\svgscale{0.25}
					\input{./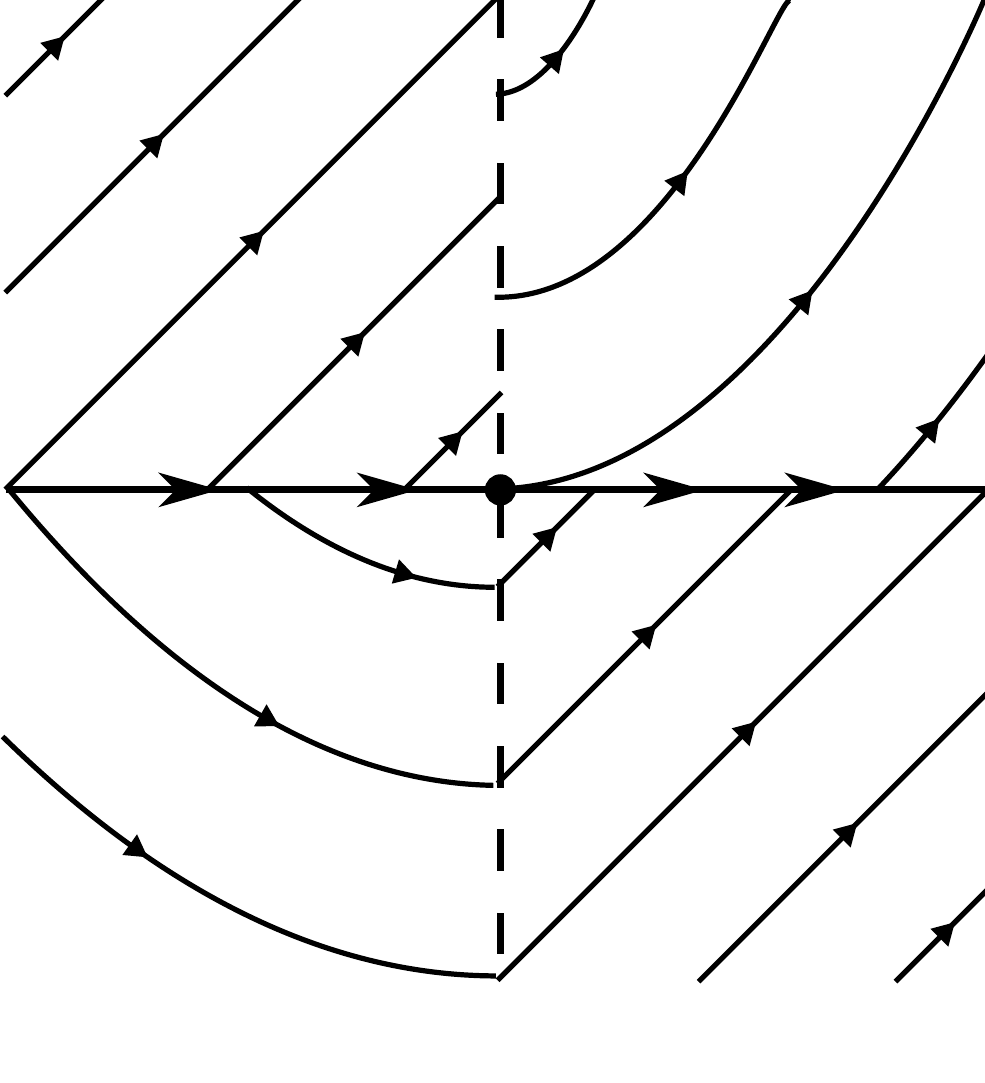_tex} \hspace{0.5cm}
					\def\svgscale{0.25}
					\input{./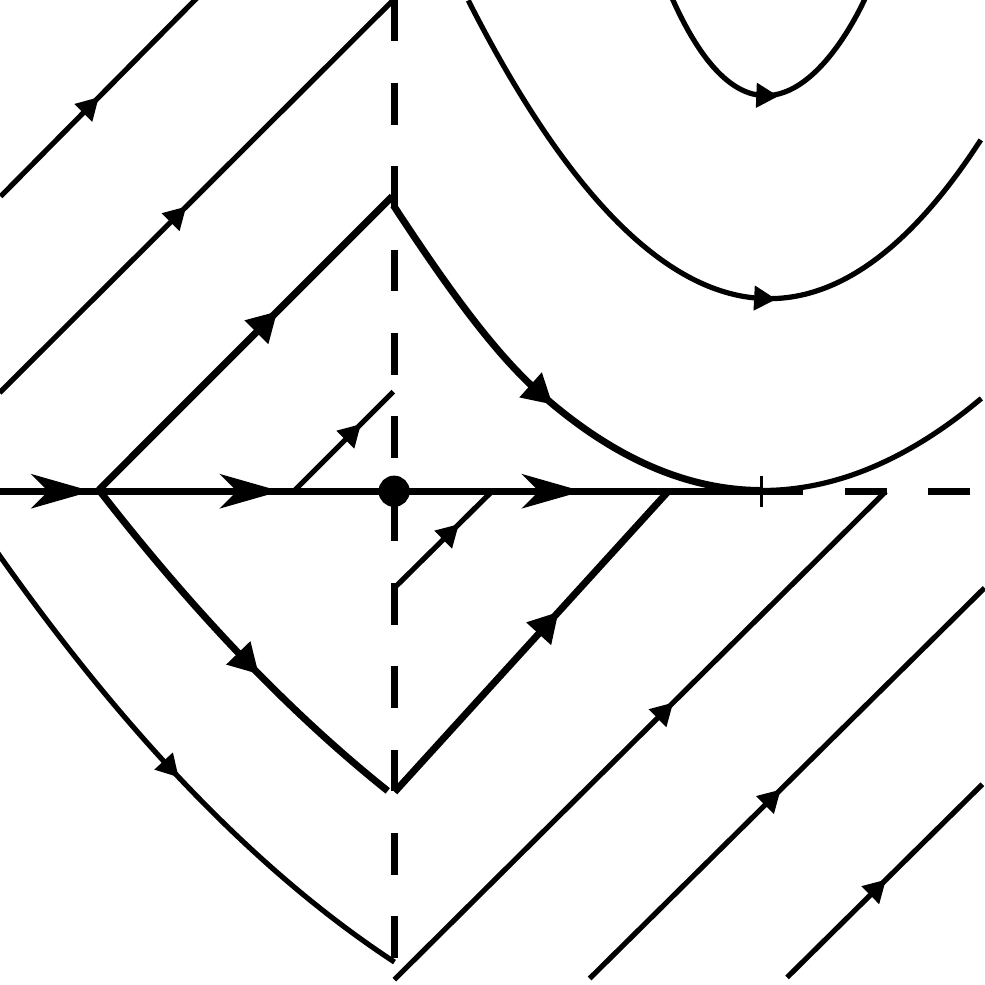_tex}}
				\vspace{0.2cm}
				\subfigure[\label{fig:F4} $Y_1 \cdot Y_2 (\0)>0$ and $Y_1(\0)<0$ ]{
					\def\svgscale{0.25}
					\input{./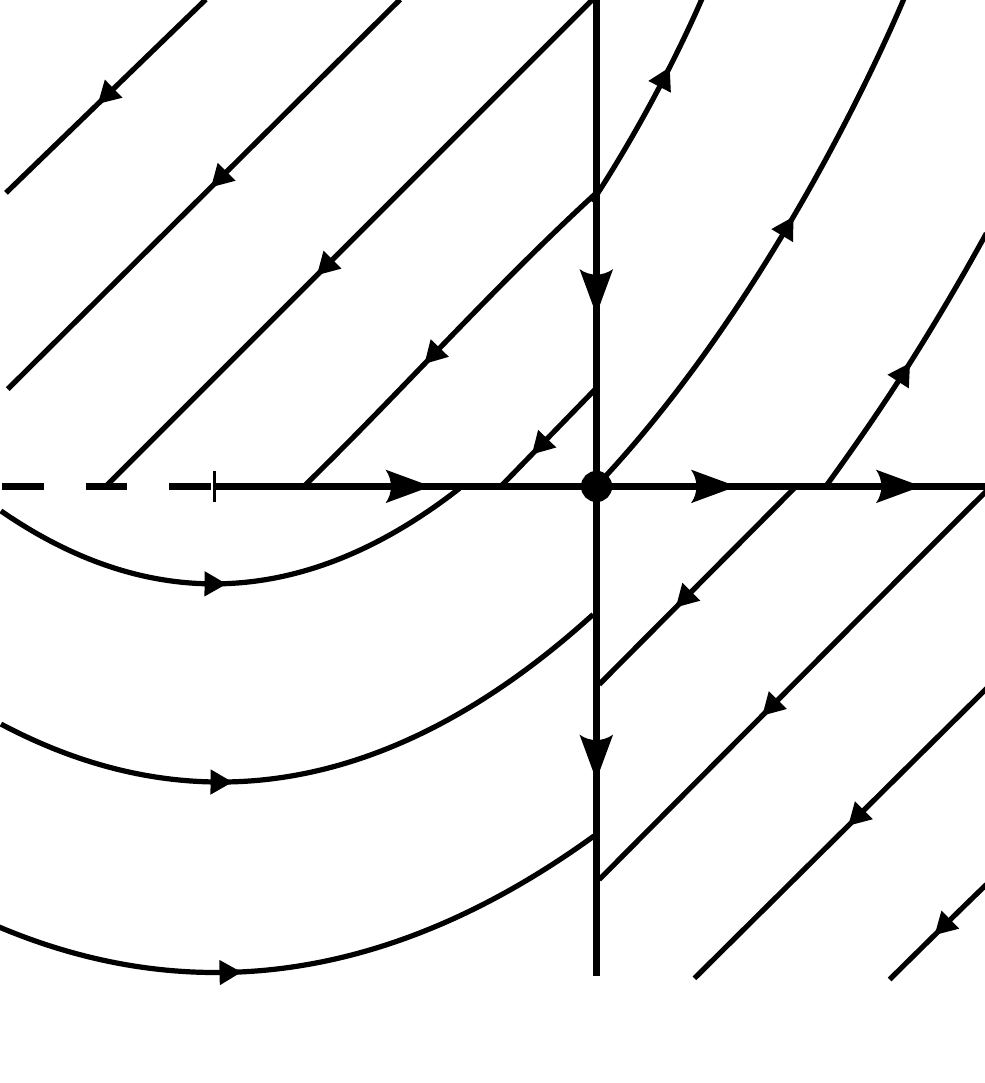_tex} \hspace{0.5cm}
					\def\svgscale{0.25}
					\input{./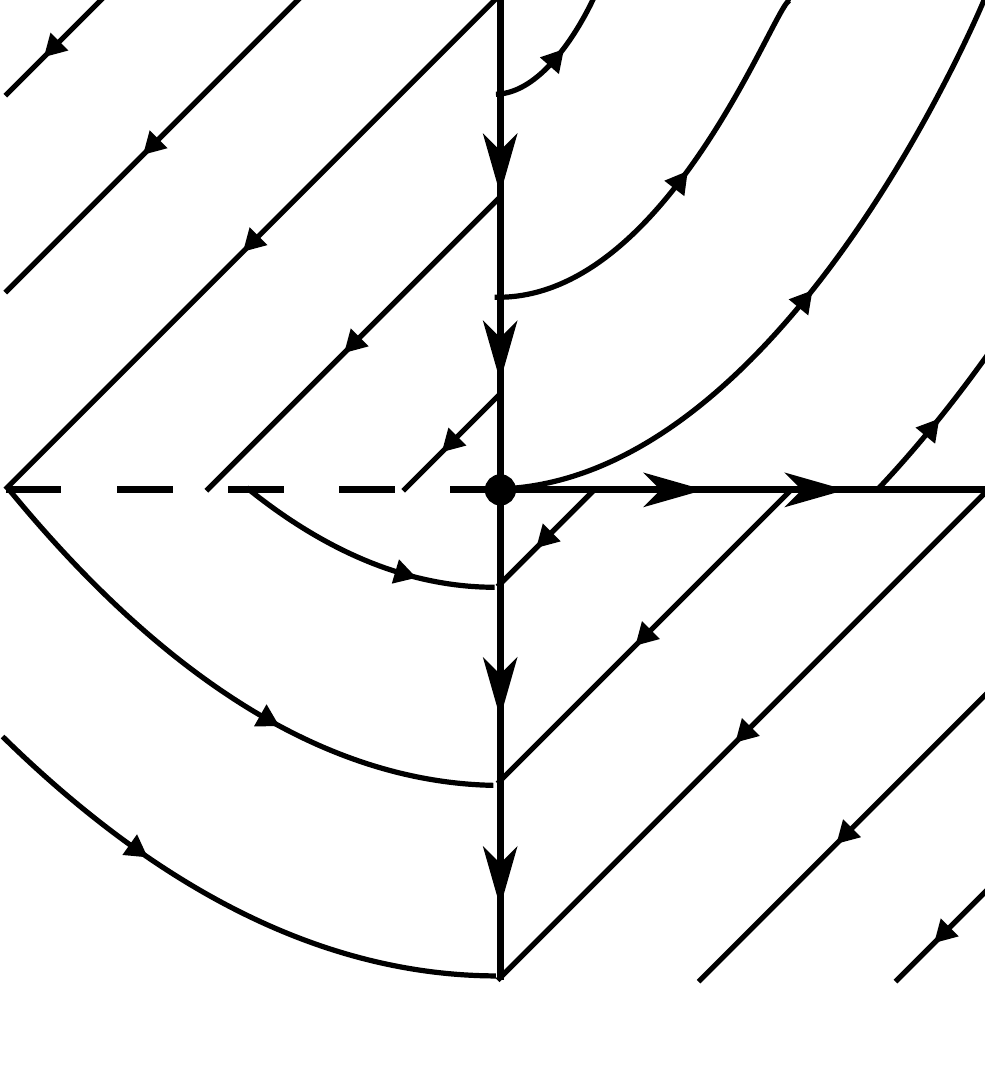_tex} \hspace{0.5cm}
					\def\svgscale{0.25}
					\input{./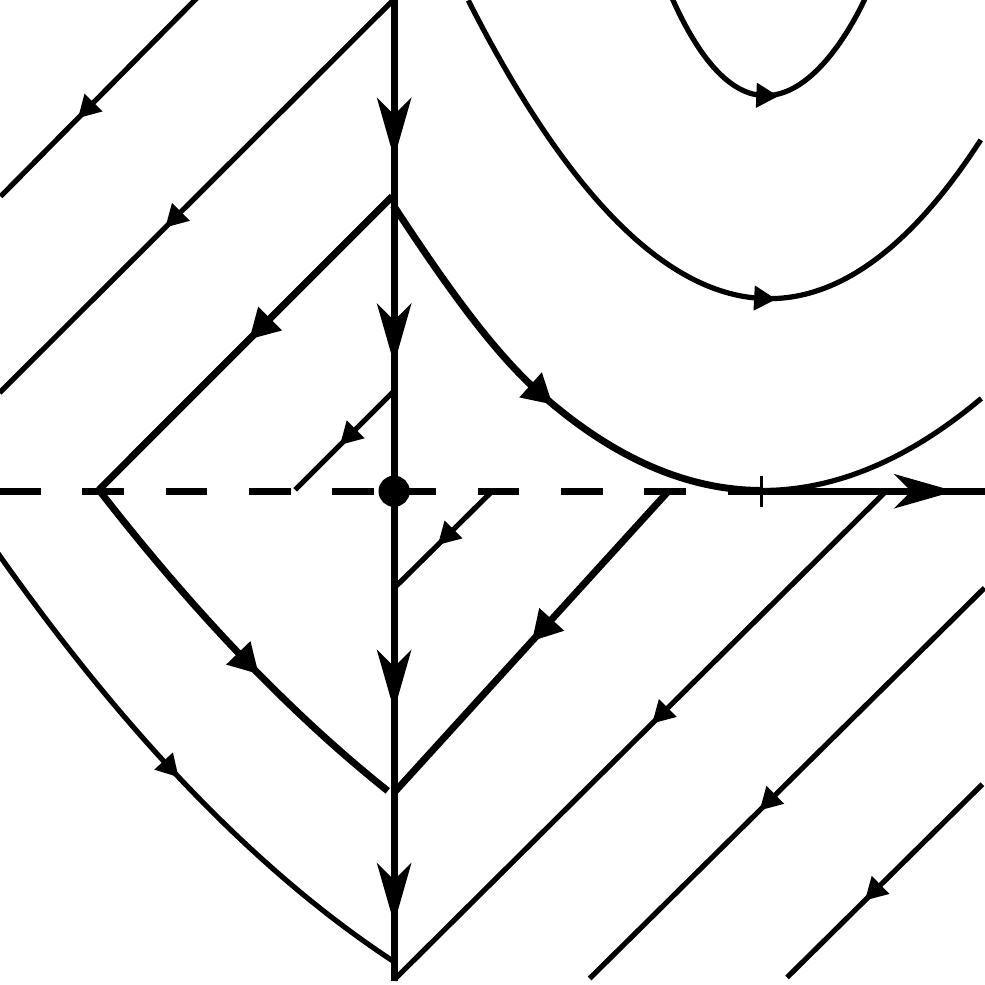_tex}}
			\end{tiny}
			\caption{The unfoldings for a regular-fold singularity satisfying $Y_1 \cdot Y_2 (\0)>0$.}
			\label{fig:F-}
		\end{figure}
	
		\begin{figure}[t]
		\centering
		\begin{tiny}
			\subfigure[\label{fig:F1} $Y_1 \cdot Y_2 (\0)<0$ and $Y_1(\0)>0$]
			{\def\svgscale{0.25}
				\input{./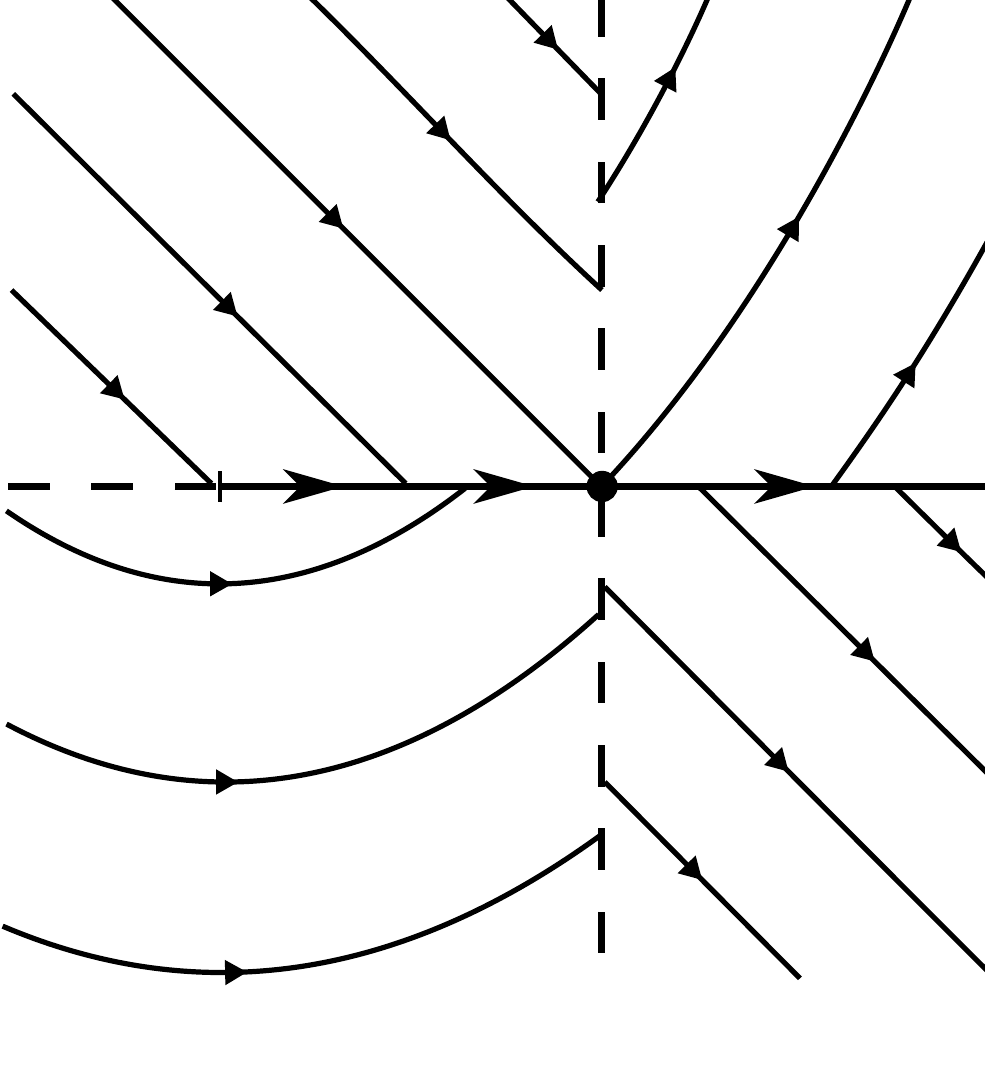_tex} \hspace{0.5cm}
				\def\svgscale{0.25}
				\input{./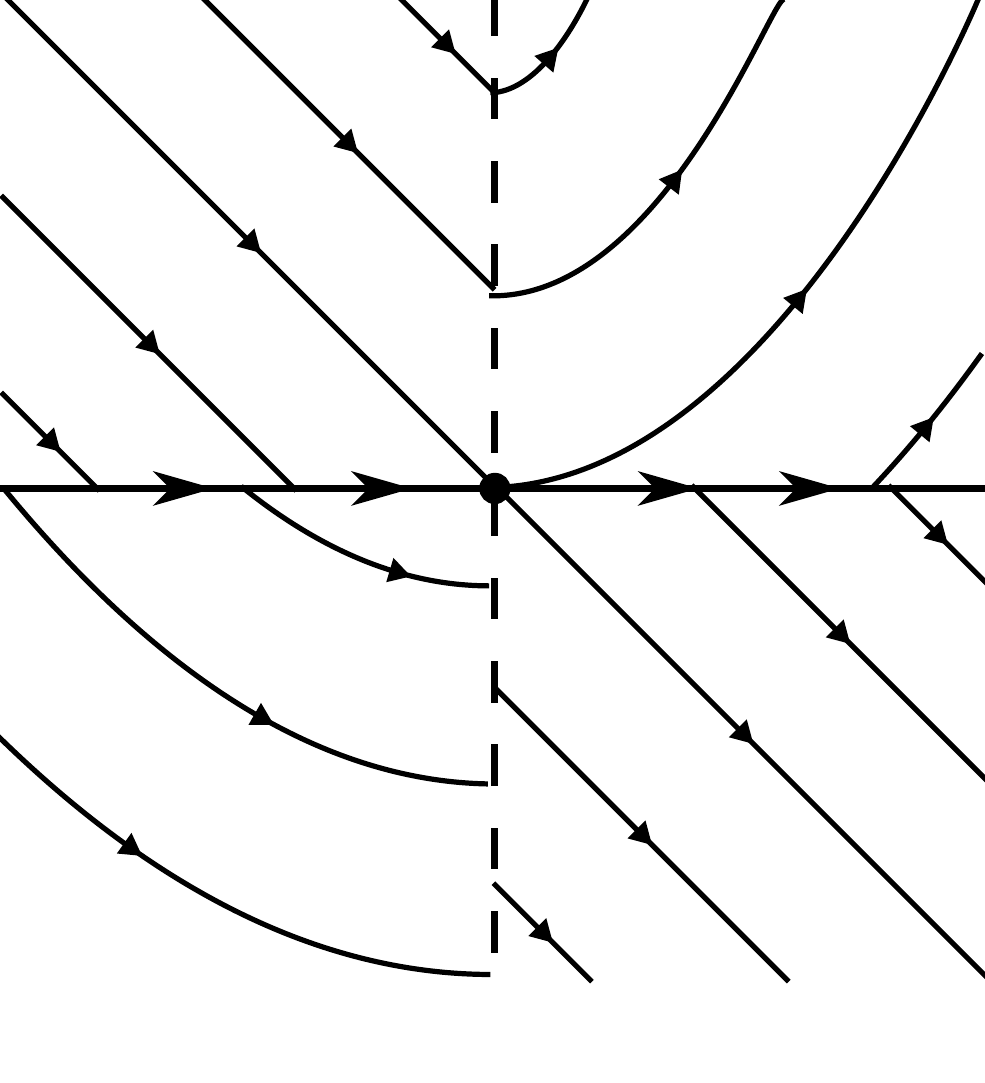_tex} \hspace{0.5cm}
				\def\svgscale{0.25}
				\input{./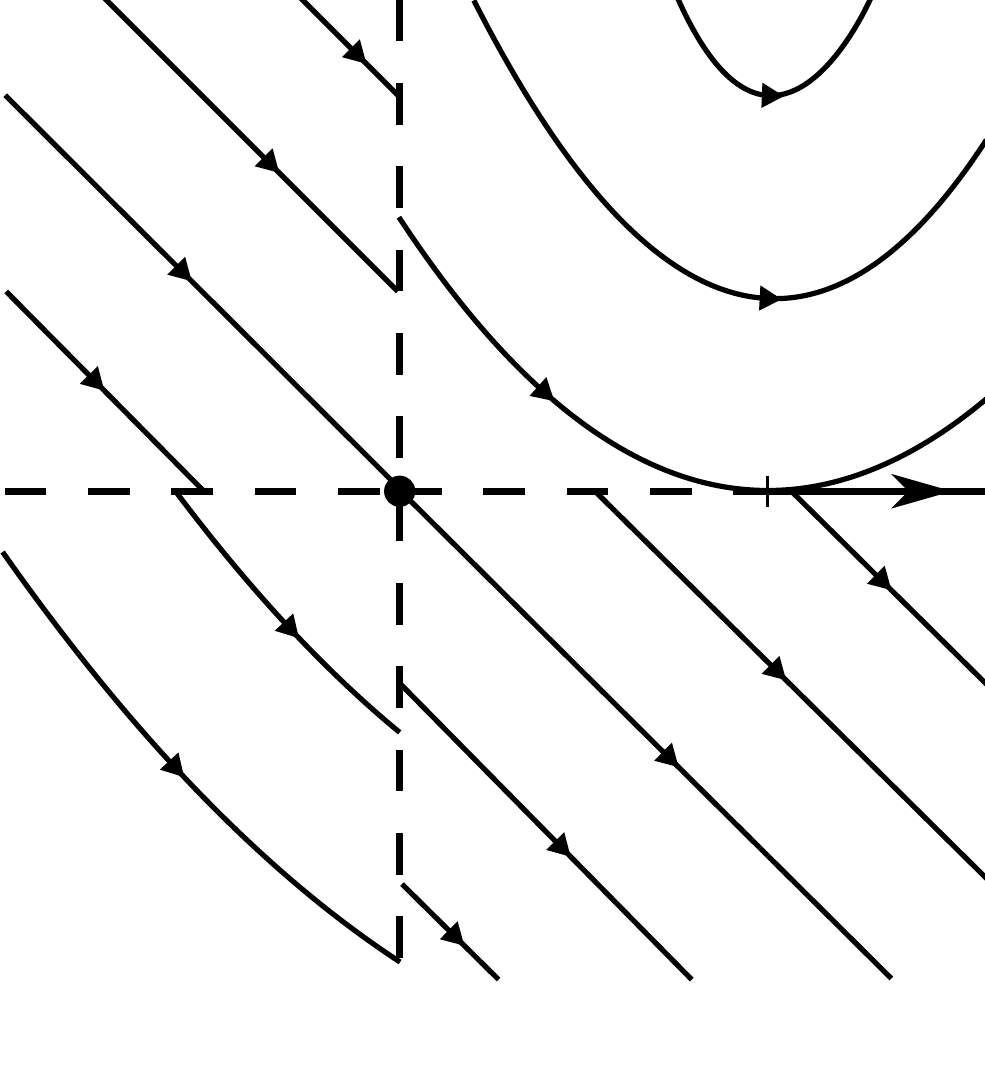_tex}}
			\vspace{0.2cm}
			\subfigure[\label{fig:F3} $Y_1 \cdot Y_2 (\0)<0$ and $Y_1(\0)<0$]{
				\def\svgscale{0.25}
				\input{./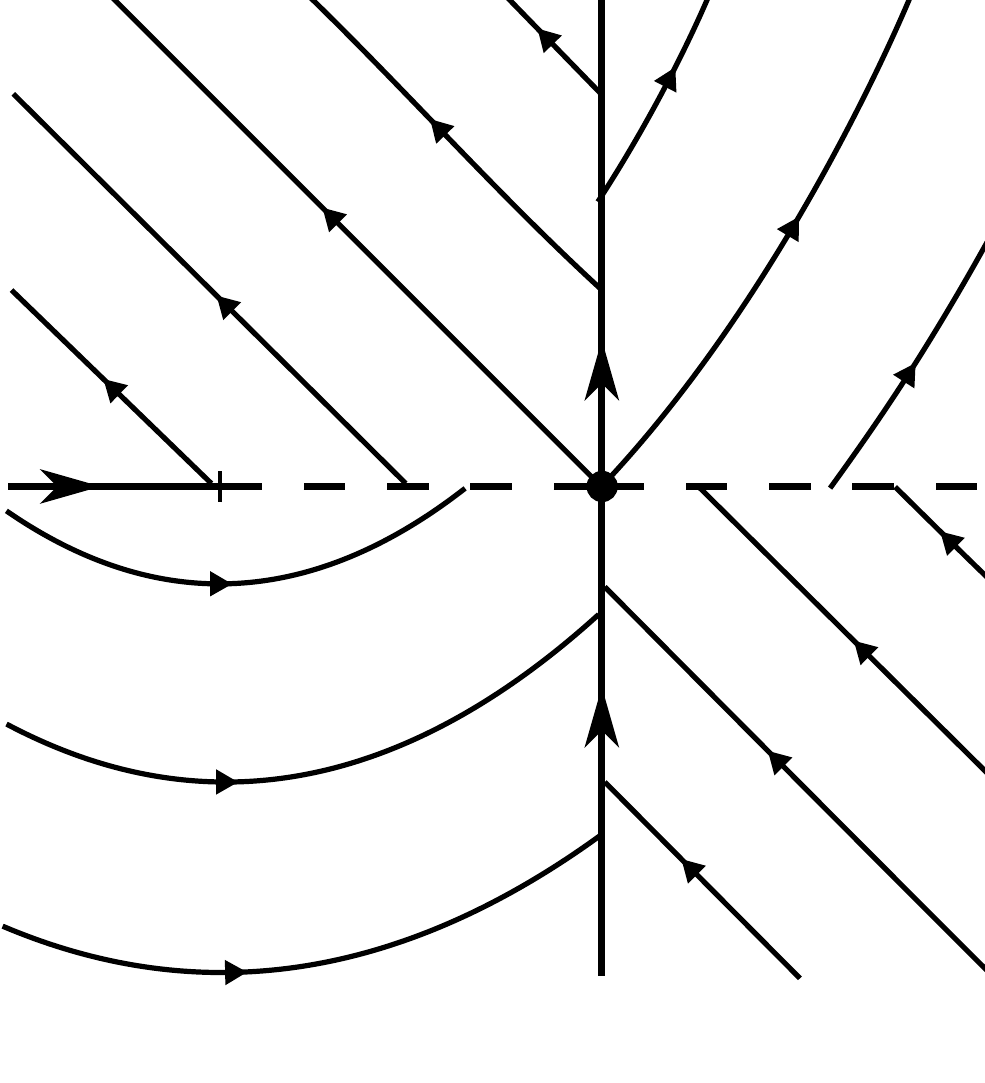_tex} \hspace{0.5cm}
				\def\svgscale{0.25}
				\input{./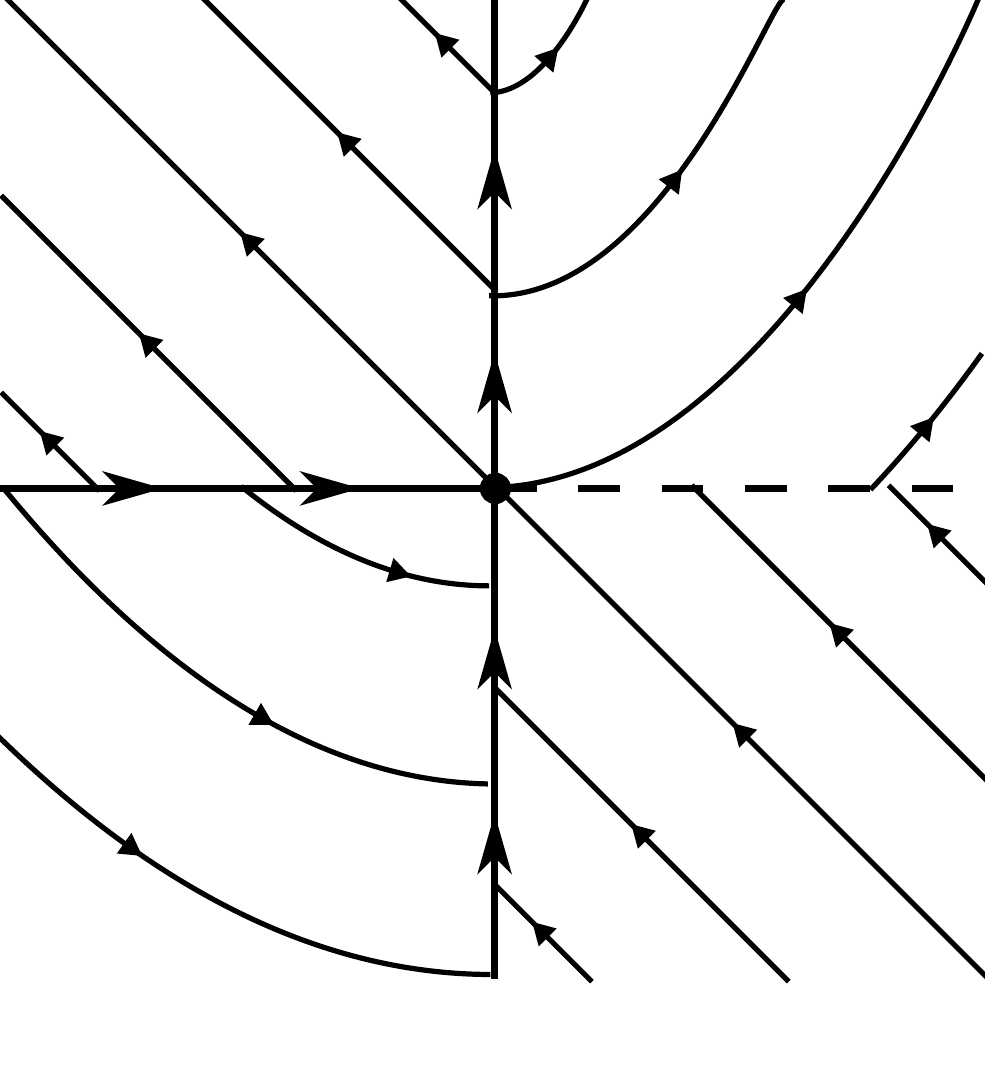_tex} \hspace{0.5cm}
				\def\svgscale{0.25}
				\input{./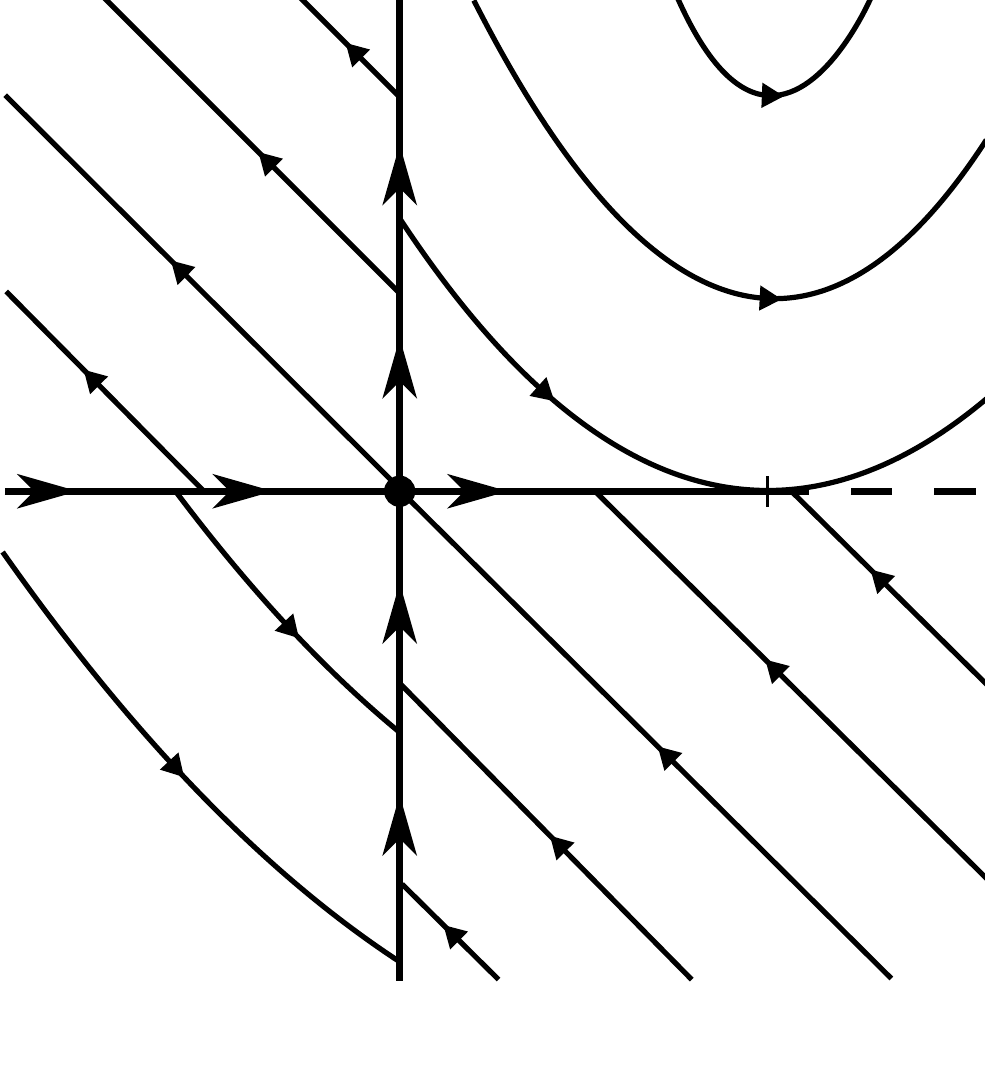_tex}}
			\vspace{0.2cm}

			\caption{The unfoldings for a regular-fold singularity satisfying $Y_1 \cdot Y_2 (\0)<0$.}
			\label{fig:F}
		\end{tiny}
	\end{figure}
		
	\end{subsection}
	
\end{section}

\section{Acknowledgments}

Tere M. Seara have been partially supported by the Spanish MINECO-FEDER Grant MTM2015-65715-P and the Catalan Grant 2014SGR504. Tere M-Seara  is also supported by the Russian Scientic Foundation grant 14-41-00044 and the European Marie Curie Action
FP7-PEOPLE-2012-IRSES: BREUDS. 

J. Larrosa has been supported by FAPESP grants 2011/22529-8 and 2014/13970-0 and the European Marie Curie Action FP7-PEOPLE-2012-IRSES: BREUDS. 

\printbibliography[title=Bibliography]

\end{document}